\providecommand{\argmin}{\operatorname*{argmin}}  
\providecommand{\Id}{\Op{Id}}                     
\providecommand{\Ce}{{\cal E}}
\providecommand{\Cf}{{\cal F}}
\providecommand{\Cg}{{\cal G}}
\providecommand{\Ck}{{\cal K}}
\providecommand{\Cl}{{\cal L}}
\providecommand{\Cp}{{\cal P}}
\providecommand{\Cu}{{\cal U}}
\providecommand{\bbN}{\mathbb{N}}
\providecommand{\bbR}{\mathbb{R}}
\newcommand*{\Op}[1]{\mathsf{#1}} 
\theoremstyle{plain}
\newtheorem{MT}{Main Theorem}
\numberwithin{MT}{section}
\newtheorem*{MT*}{Main Result}
\newtheorem{theorem}{Theorem}[section]
\newtheorem{proposition}[theorem]{Proposition}
\newtheorem{lemma}[theorem]{Lemma}
\newtheorem{corollary}[theorem]{Corollary}
\theoremstyle{definition}
\newtheorem{definition}[theorem]{Definition}
\newtheorem{remark}[theorem]{Remark}
\newcommand{\ben}{\begin{equation}}
\newcommand{\een}{\end{equation}}
\newcommand{\benn}{\begin{equation*}}
\newcommand{\eenn}{\end{equation*}}
\numberwithin{equation}{section}
\newcommand{\linspan}{\mathop{\rm span}\nolimits}
\newcommand{\rest}{\left.\kern-2\nulldelimiterspace\right|_}
\newcommand{\norm}[2]{\left|#1\right|_{#2}}
\newcommand{\opnorm}[2]{\left\|#1\right\|_{#2}}
\newcommand{\fractx}[2]{{\textstyle\frac{#1}{#2}}}
\newcommand{\p}{\partial}
\newcommand{\ed}{\mathrm d}
\newcommand{\blue}{\color{blue}} 
\newcommand{\red}{\color{red}} 
\newcommand{\green}{\color{green}} 
\newcommand{\magenta}{\color{magenta}} 
\newcommand{\cyan}{\color{cyan}} 
\newcommand{\orange}{\color{orange}} 
\newcommand*{\Bigcdot}{\raisebox{-.25ex}{\scalebox{1.25}{$\cdot$}}}
\newcommand{\R}{{\mathbb R}}
\newcommand{\RRR}{{\mathbf R}}
\newcommand{\KKK}{{\mathbf K}}
\newcommand{\PPP}{{\mathbf P}}
\newcommand{\GGG}{{\mathbf G}}
\newcommand{\DD}{{\mathcal D}}
\newcommand{\KK}{{\mathcal K}}
\newcommand{\LL}{{\mathcal L}}
\newcommand{\MM}{{\mathcal M}}
\newcommand{\Ma}{{\mathbf M}}
\newcommand{\St}{{\mathbf S}}
\begin{document}
\title{On the explicit feedback stabilisation of~\textsc{1D} linear nonautonomous parabolic equations via oblique projections}
\author{S\'ergio S.~Rodrigues}
\address{Johann Radon Institute for Computational and Applied Mathematics,
Altenbergerstra{\normalfont\ss}e 69, 4040 Linz, Austria.\newline \indent
Tel. +43 732 2468 5241.\quad e-mail: {\small\tt sergio.rodrigues@ricam.oeaw.ac.at}}
\author{Kevin Sturm}
\address{TU Wien, Institute for Analysis and Scientific Computing, Wiedner Hauptstra{\normalfont\ss}e 8-10, 1040 Wien, Austria. \newline \indent
 e-mail: {\small\tt kevin.sturm@tuwien.ac.at}}

\begin{abstract}
In recently proposed stabilisation techniques for parabolic equations, a crucial role is played by a
suitable sequence of oblique projections in~Hilbert spaces, onto the linear span of
a suitable set of~$M$
actuators, and along the subspace orthogonal to the space
spanned by ``the'' first~$M$ eigenfunctions of the Laplacian operator. 
 This new approach uses an explicit feedback law, 
which is stabilising provided that the sequence of operator
norms of such oblique projections remains bounded.

The main result of the paper is the proof
that, for suitable explicitly given sequences of sets of
actuators, the operator norm of the corresponding oblique projections remains bounded.

In the final part of the paper we provide numerical results,
showing the performance of the explicit feedback control for both Dirichlet and Neumann homogeneous
boundary conditions.

\end{abstract}

\subjclass[2010]{93D15,47A75,47N70}

\keywords{Exponential stabilisation, nonautonomous parabolic equations, nonorthogonal projections, eigenvalue problems}

\maketitle

\vspace*{-3em}
{\tiny
}

\pagestyle{myheadings} \thispagestyle{plain} \markboth{\sc S. S. Rodrigues and K. Sturm}
{\sc Stabilisation of parabolic equations via oblique projections }

\section{Introduction}
In the recent work \cite{KunRod-pp17} a new explicit feedback control for nonautonomous parabolic equations such as
\begin{equation}\label{sys-y-par-intro}
  \fractx{\p}{\p t} y -\nu\Delta y +ay=\sum_{j=1}^M u_j1_{\omega_j} \quad  \text{ in } (0,L)\times (0,\infty), \qquad    y(0)=y_0 \quad \text{ in } (0,L),
\end{equation}
supplemented with either homogeneous Dirichlet or Neumann boundary conditions, has been proposed.
The reaction coefficient~$a=a(x,t)\in\bbR$ is given and allowed to be time and space dependent.
The scalar functions $u_1(t),u_2(t),\dots,u_M(t)$ are time-dependent controls at our disposal.
The indicator functions~$1_{\omega_i}=1_{\omega_i}(x)$ associated with domains~$\omega_i\subset(0,L)$, are our actuators.
Note that in this way, our control   on the right hand side of \eqref{sys-y-par-intro}, 
is finite dimensional, that is, for all given time $t\ge0$ our control is a linear combination of our (finite number of) actuators. Further, it is
supported (localised) in the union~$\bigcup_{j=1}^M\omega_j$.

In \cite[Sect.~3 and~5]{KunRod-pp17} it has been proven that for all positive~$\lambda>0$, the explicit feedback
control~$\sum_{j=1}^M u_j(t)1_{\omega_j}$ defined by  
\begin{equation}\label{FeedKy}
  y\mapsto\KK y\coloneqq P_ {U_M}^{E_{M}^\perp}\left(-\Delta y +ay-\lambda y\right)
\end{equation}
is exponentially stabilising, provided we can find a sequence of actuator domains  $\omega_1^M,\ldots, \omega_M^M$, such that the subspaces~$U_{M}=\linspan\{1_{\omega_1^M},1_{\omega_2^M},\ldots, 1_{\omega_{M}^M}\}$ and $E_M\subset L^2(0,L)$ satisfy the conditions
\begin{subequations}
  \begin{align}
       L^2(0,L)&={U_M}\oplus E_M^\perp,\label{eq:splitting}\\
    \intertext{and}
       \norm{ P_{U_M}^{E_M^{\perp}}}{\LL(L^2(0,L)) }&\le C_P,\quad\mbox{for all}\quad M\ge1,\label{bdOpnorm}
   \end{align}
   for a suitable constant $C_P\ge1$, independent of~$M$. 
 \end{subequations}
Here $E_M$ is the linear span of the first $M$ eigenfunctions of the Dirichlet
or Neumann Laplacian~$-\Delta$ on $(0,L)$. 
Further $P_{{U_M}}^{E_M^{\perp}}$
is the oblique/nonorthogonal projection in~$L^2(0,L)$ onto the linear span~${U_M}$ of our
actuators defined through the direct sum \eqref{eq:splitting}. So, the first
condition~\eqref{eq:splitting} merely tells us that the oblique projection is well-defined.  
We underline that verifying that the operator norm~\eqref{bdOpnorm} remains bounded is challenging since
a nonorthogonal projection has an operator norm strictly bigger than~$1$ and,
depending  on the choice  of~$U_{M}$,  a finite constant $C_P$ satisfying \eqref{bdOpnorm} may not exist.  
However, in~\cite[Sections~4.6 and~4.7]{KunRod-pp17}, it was observed numerically that
the operator norm will likely remain bounded for suitable placements of the
actuators. The main contribution of this paper is to give a rigorous proof of this numerical observation.

 We will  consider for a given~$r\in(0,1)$, and for
a given integer~$M\ge 1$ equally sized intervals $\omega_1^M,\ldots, \omega_M^M \subset (0,L)$,
each of length  $\frac{rL}{M}$, with centers~$c_1^{M},\ldots, c_M^M\in(0,L)$ given by 
 \begin{equation}\label{act-eqlen}
\omega_j^{M}\coloneqq(c_j^{M}-\fractx{rL}{2M}, c_j^{M}+ \fractx{rL}{2M}),
\qquad j\in\{1,2,\dots,M\}.
\end{equation}
With these domains we can state our main result. 
\begin{MT*}
  Let $r\in (0,1)$ and $L>0$. Let $E_M$, $M\ge 1$, be the linear span of the
  first eigenfunctions of the Dirichlet or Neumann Laplacian in $L^2(0,L)$. Then there exists for each $M\ge 1$, centers $c_1^M,\dots,c_M^M\in (0,L)$, such that the corresponding intervals  $\omega_1^M,\ldots, \omega_M^M$ defined via \eqref{act-eqlen} are disjoint and the sequence
  of linear spans $U_M:=\linspan\{1_{\omega_1^M},\ldots, 1_{\omega_M^M}\}$ of the corresponding actuators satisfies~\eqref{eq:splitting} and  \eqref{bdOpnorm}.
\end{MT*}

Explicit sequences $c_1^M,\ldots, c_M^M$ that define the domains~$\omega_j^M$ are given hereafter.
We underline that by construction the total volume  covered by the (disjoint) actuators
is given by~${\rm vol}\left(\bigcup_{j=1}^M\omega_i^M\right)=\sum_{j=1}^M{\rm vol}(\omega_i^M)=rL$ remains the same for all $M\ge 1$. That is, by taking a larger number of actuators, the volume of the control support remains the exactly same, equal to~$rL$.
\smallskip
 A particularity of the feedback defined in~\eqref{FeedKy} is that it is explicit and its computation boils down to the
construction of the oblique projection, which in turn (as we will see) requires only the inversion of a~$M\times M$ matrix,
where~$M$ is the number of actuators. 
This makes the method numerically attractive because, for example, it is cheaper to compute when compared to Riccati based feedbacks,
which require the computation of the solution of a suitable differential Riccati equation  and  may
be a quite demanding numerical task, for accurate approximations.

 On the other hand, we cannot guarantee that the feedback~$\Ck$ in~\eqref{FeedKy} is stabilising whenever the Riccati based feedback is.
That is, in theory, when~$\Ck$ fails to stabilise the system (for a ``bad'' placement of the actuators), it may happen that a Riccati based feedback
is still stabilising.

So, in a real application, in case we are able to (choose where to) place the actuators,
then we propose the feedback~$\Ck$ which is computationally cheap. In the case
we are not able to place the actuators, then the feedback~$\Ck$ may be not stabilising, and in that case we can/must try with a Riccati-based feedback.
For works dealing with Riccati based stabilisation of parabolic-like nonautonomous systems we refer
to~\cite{PhanRod18-mcss,Rod18,BarRodShi11,BreKunRod17,KroRod15,KroRod-ecc15}.

Condition~\eqref{bdOpnorm} suggests us that ``an optimal'' placement of the actuators as in~\eqref{act-eqlen} for a given~$M$, is one placement that
minimises the operator norm of the oblique projection. This is a nontrivial interesting problem to be addressed in a future
work~\cite{RodSturm-ow18}. Concerning this point we would like to mention~\cite{Morris11,MorrisYang16} where the question of
optimal actuator location/placement is addressed, with a different goal: that 
of minimising a quadratic cost functional. 
See also the recent works~\cite{KaliseKunischSturm_arx17,PrivatTrelatZuazua17}  concerning the problem of optimal actuators design
(where the goal is again to minimise a suitable cost functional, where the shape of the control's (or each actuator's) support is not
fixed a priori).

\begin{remark}
The results in~\cite{KunRod-pp17}, for the explicit feedback~$\Ck$ in~\eqref{FeedKy}, besides a reaction term~$ay$ in~\eqref{sys-y-par-intro}
also allow the inclusion of a convention term of the form $\nabla\cdot\bigl(b(t)z\bigr)$ in the dynamics
of system~\eqref{sys-y-par-intro}, under Dirichlet
boundary conditions. However, the inclusion of a general convention term in system~\eqref{sys-y-par-intro} under Neumann boundary conditions is 
not covered by the results in~\cite{KunRod-pp17}. See in particular~\cite[Assumption~2.3]{KunRod-pp17}. Here the main goal
is the investigation of the properties of the oblique projection~$P_{U_M}^{E_M^\perp}$,
appearing in the feedback control~\eqref{FeedKy}, rather than investigating the regularity properties of the system, 
that is why here we consider only the case of a reaction term, for which the results in~\cite{KunRod-pp17} still hold for Neumann boundary conditions.
Notice that in~\cite[Sect.~2]{KunRod-pp17} the eigenvalues of the symmetric operator~$A$, 
are asked to be strictly positive. For homogeneous Neumann boundary conditions we can write~$-\nu\Delta y +ay=(-\nu\Delta +\Id)y +(a-1)y$, and
set~$A=-\nu\Delta +\Id$ whose  eigenvalues,~$\nu\alpha_i+1\ge1$, are strictly positive.
\end{remark}

We are particularly interested in the case of nonautonomous systems. In the particular case of autonomous
the spectral properties of the time independent operator~$-\nu\Delta +a\Id$ may be exploited, see~\cite[Section~5.6]{KunRod-pp17}. We refer to the
works~\cite{RaymThev10,BadTakah11,Barbu12,BarbuLasTri06,BarbuTri04,Barbu_TAC13,HalanayMureaSafta13} and references therein.
In~\cite{Barbu12,Barbu_TAC13,HalanayMureaSafta13} the feedbacks are
constructed explicitly, while
in~\cite{RaymThev10,BadTakah11,BarbuLasTri06,BarbuTri04} they are based on  Riccati equations. We recall that
the spectral properties of~$-\nu\Delta +a(t)\Id$ seem to be (at least, by themselves) not appropriate for studying the stability
of the corresponding nonautonomous system, see~\cite{Wu74}.

\medskip
The paper is organized as follows.
In Sect.~\ref{S:mainresults} we give a more precise description of the main results. In Sect.~\ref{S:oblique_proj} we recall suitable properties
of oblique projections. In Sect.~\ref{S:Dirichlet} we prove the main results for the case of Dirichlet boundary conditions.
The results for the case of Neumann boundary conditions are proven in Sect.~\ref{S:Neumann}. In Sect.~\ref{S:OtherLoc}  we give some remarks concerning the location of the actuators.
A few additional remarks concerning the oblique projection based feedback are given in Sect.~\ref{S:remOPorjFeed}.
Numerical simulations on the performance of the
feedback control are presented in Sect.~\ref{S:numeric}. Finally, the appendix includes the proof of an auxiliary result used in the main text.

\medskip\noindent
{\em Notation.}
We write~$\mathbb R$ and~$\mathbb N$ for the sets of real and nonnegative
integer numbers, respectively.

Given a (real and separable) Hilbert space~$X$, with scalar product~$(\Bigcdot,\Bigcdot)_X$ and associated~$\norm{\Bigcdot}{X}$, 
the subspace orthogonal to a subset~$S\subseteq X$ will be denoted~$S^\perp\coloneqq\{h\in X\mid (h,s)_H=0\mbox{ for all }s\in S\}$, as usual.

Given another Hilbert space~$Z$ the set of bounded linear mapping from~$X$ into~$Z$ will be denoted~$\LL(X,Z)$. When~$Z=X$ we
simply write~$\LL(X)\coloneqq\LL(X,X)$. The dual of~$X$ is denoted $X'=\LL(X,\R)$ and  equipped with the usual norm dual
norm~$\norm{f}{X'}=\sup\limits_{x\in X,\; |x|_X\le 1}|f(x)|$ for $f\in X'$.  

Given a linear operator $P\in\LL(X,Z)$ we denote its adjoint by $P^*\in \LL(Z',X')$, defined by
$\langle P^*f,h\rangle_{X',X}=\langle f,Ph\rangle_{Z',Z}$, for all~$(f,h)\in Z'\times X$, where $ \langle \cdot,\cdot \rangle_{X',X}$ denotes as usual the duality pairing.

\section{Main results}\label{S:mainresults}
 
The goal of this paper is to investigate suitable properties of the oblique
projection~$P_{U_M}^{E_M^{\perp}}$ in~$L^2(0,L)$, onto~$U_M$ along~$E_M^{\perp}$
which we define precisely below.  The space~$U_M$ is the space spanned by our actuators. More precisely for a given~$M\ge1$, we define 
\ben
 U(c^{M}):=U_M:=  \linspan\{1_{\omega_j^M}\mid i\in\{1,2,\dots,M\}\},
\een
where for fixed $r\in (0,1)$ the actuator domains $\omega_1^M,\dots, \omega_M^M\subset (0,L)$ 
are defined via \eqref{act-eqlen} and are disjoint (i.e., $\omega_i^{M}\cap\omega_j^{M}=\emptyset$ if~$i\ne j$), which is to say that, 
\begin{equation}\label{act-disj}
 c_{i}^{M}-c_{j}^{M}\ge \fractx{rL}{M}, \qquad\mbox{if}\qquad i\ne j.
\end{equation}
The space~$E_M^\perp$ is the orthogonal complement, in~$L^2(0,L)$, to the space
spanned by  the first~$M$  eigenfunctions of the Laplacian~$-\Delta$ in~$L^2(0,L)$, under either Dirichlet of Neumann boundary conditions. 

Now, for clarity of the exposition we will distinguish between Dirichlet of Neumann boundary conditions. We will use a (left) superscript
as ${}^{\tt d}(\Bigcdot)$ to underline the Dirichlet boundary conditions and a supercript as ${}^{\tt n}(\Bigcdot)$ to underline the Neumann boundary conditions.
For statements concerning either of these boundary conditions we write no superscript.

The Dirichlet Laplacian eigenvalues and eigenfunctions are denoted:
\begin{equation}\label{eigs-Dir}
-\Delta {}^{\tt d}\!e_i={}^{\tt d}\!\alpha_i{}^{\tt d}\!e_i,\quad  {}^{\tt d}\!e_i(0)=0={}^{\tt d}\!e_i(L),
\qquad 0<{}^{\tt d}\!\alpha_1 < {}^{\tt d}\!\alpha_2\le \ldots  \le{}^{\tt d}\!\alpha_N \rightarrow +\infty, 
\end{equation}
and we set
\[
 {}^{\tt d}\!E_M=\linspan\{{}^{\tt d}\!e_i\mid i\in\{1,2,\dots,M\}\}.
\]

 In the following to simplify the exposition for linear bounded operators~$\Cp:L^2(0,L)\to L^2(0,L)$, we denote the operator norm simply by
$\opnorm{\Cp}{}\coloneqq\norm{\Cp}{\LL(L^2(0,L))}$.

\begin{MT}\label{MT:Dirmxe}
Let~$r\in(0,1)$, and consider
 the location of our actuators given by
\begin{equation}\label{Dmxe}\stepcounter{equation}
 c_j^M\coloneqq\frac{(2j-1)L}{2M},
\qquad\mbox{with}\quad  j\in\{1,2,\dots,M\}.  \tag{(\theequation)--{\tt mxe}}
\end{equation}
Then the operator norm of the oblique projection
  $P_{{U(c^M)}}^{{}^{\tt d}\!E_M^{\perp}}:L^2(0,L)\to L^2(0,L)$  is given by
\[
  \opnorm{P_{{U(c^M)}}^{{}^{\tt d}\!E_M^{\perp}}}{}=\left(\fractx{4M^2}{r\pi^2(M-1)^2}\sin^2\left(\fractx{(M-1)r\pi}{2M}\right)\right)^{-\frac{1}{2}},
 \quad\mbox{for}\quad M\ge2.
\]
 In particular~$\left\{\opnorm{P_{{U(c^M)}}^{{}^{\tt d}\!E_M^{\perp}}}{}\right\}_{M\ge 2}$ increases and converges to $\frac{\sqrt{r}\pi}{2\sin\left(\frac{r\pi}{2}\right)}$. 
\end{MT}

\begin{MT}\label{MT:Diruni}
Let~$r\in(0,1)$, and consider for
 the location of our actuators given by
\begin{equation}\label{Duni}\stepcounter{equation}
c_j^M=\frac{j L}{M+1}, 
\qquad\mbox{for }\quad  M \ge \fractx{r}{1-r}  
\quad\mbox{and}\quad  j\in\{1,2,\dots,M\}, \tag{(\theequation)--{\tt uni}}
\end{equation}
Then the operator norm of the oblique projection $P_{{U(c^M)}}^{{}^{\tt d}\!E_M^{\perp}}:L^2(0,L)\to L^2(0,L)$ is given by
\[
  \opnorm{P_{{U(c^M)}}^{{}^{\tt d}\!E_M^{\perp}}}{}
 =\left(\fractx{4(M+1)}{r\pi^2M}\sin^2\left(\fractx{r\pi}{2}\right)\right)^{-\frac{1}{2}},\quad\mbox{for}\quad  M \ge \fractx{r}{1-r}.
\]
 In particular~$\left\{\opnorm{P_{{U(c^M)}}^{{}^{\tt d}\!E_M^{\perp}}}{}\right\}_{M\in\bbN}$ increases, for~$M\ge \fractx{r}{1-r}$,  and converges to $\frac{\sqrt{r}\pi}{2\sin\left(\frac{r\pi}{2}\right)}$.  
\end{MT}

The condition $M\ge\fractx{r}{1-r}$ in~\eqref{Duni} is necessary and sufficient to guarantee~\eqref{act-disj},
because  $c_{j+1}-c_j=\frac{L}{M+1}$ and thus we have~\eqref{act-disj} if, and only if,~$\frac{1}{M+1}\ge\frac{r}{M}$.

\bigskip

A similar results holds true, under Neumann boundary conditions.

Let ${}^{\tt n}\!E_M$ be the span ${}^{\tt n}\!E_M=\linspan\{{}^{\tt n}\!e_i\mid i\in\{1,2,\dots,M\}\}$
of the first $M$ Neumann eigenfunctions of~$-\Delta$ in~$L^2(0,L)$:
\begin{equation}\label{eigs-Neu}
-\Delta {}^{\tt n}\!e_i={}^{\tt n}\!\alpha_i {}^{\tt n}\!e_i,\quad  \fractx{\p}{\p{x}}{}^{\tt n}\!e_i\rest{x=0}=0=\fractx{\p}{\p{x}}{}^{\tt n}\!e_i\rest{x=L}
,\qquad 0={}^{\tt n}\!\alpha_1 < {}^{\tt n}\!\alpha_2\le \ldots  \le{}^{\tt n}\!\alpha_N \rightarrow +\infty. 
\end{equation}
Analogously $P_{{U_M}}^{{}^{\tt n}\!E_M^{\perp}}$
is the oblique projection in~$L^2(0,L)$ onto~${U_M}=\linspan\{1_{\omega_j}\mid j\in\{1,2,\dots,M\}\}$ along~${}^{\tt n}\!E_M^\perp$,
the orthogonal complement of~${}^{\tt n}\!E_M$.
\begin{MT}\label{MT:Neumxe}
Let~$r\in(0,1)$, and consider
 the location of our actuators given by~\eqref{Dmxe}. 
Then the operator norm of the oblique
projection~$P_{{U(c^M)}}^{{}^{\tt n}\!E_M^{\perp}}:L^2(0,L)\to L^2(0,L)$ is given by
\[
  \opnorm{P_{{U(c^M)}}^{{}^{\tt n}\!E_M^{\perp}}}{}
 =\left(\fractx{4M^2}{r\pi^2(M-1)^2}\sin^2\left(\fractx{r\pi(M-1)}{2M}\right)\right)^{-\frac{1}{2}},\quad\mbox{for}\quad M\ge2.
\]
 In particular~$\left\{\opnorm{P_{{U( c^M)}}^{{}^{\tt n}\!E_M^{\perp}}}{}\right\}_{M\ge 2}$
increases and converges to $\frac{\sqrt{r}\pi}{2\sin\left(\frac{r\pi}{2}\right)}$. 
\end{MT}

Next, we give some remarks on the above results.

\begin{remark}
In~\cite{KunRod-pp17} it was shown that the feedback \eqref{FeedKy} is stabilising for system \eqref{sys-y-par-intro},  
if \eqref{eq:splitting} and 
\begin{align}\label{suffcond}
 \nu\alpha_{M+1}&>\left(6+4\opnorm{P_{U_M}^{E_M^{\perp}}}{}^2\right)\norm{a\Id}{L^\infty((0,+\infty),\LL(L^2(0,L),H^{-1}(0,L)))}^2,
 \end{align}
are satisfied. The operator~$\Id$,  in~\eqref{suffcond}, stands for the identity/inclusion
operator. Since  Dirichlet and Neumann eigenvalues satisfy
$\alpha_M\to +\infty$, then it follows from the boundedness of $\opnorm{P_{U_M}^{E_M^\perp}}{}$ that \eqref{suffcond} is satisfied. 
Recall that under Dirichlet boundary conditions
we have~$\alpha_M={}^{\tt d}\!\alpha_M=(\frac{\pi}{L})^2M^2$, and under Neumann boundary conditions
we have~$\alpha_M={}^{\tt n}\!\alpha_M=(\frac{\pi}{L})^2(M-1)^2$.
\end{remark}

Using~\eqref{suffcond} and Main Theorems~\ref{MT:Dirmxe} and~\ref{MT:Diruni} we have the following.
\begin{corollary}
  Let $L>0$, ~$\lambda>0$, and~$\nu>0$ be given positive real numbers, and let $a\in L^\infty((0,L)\times(0,+\infty)))$ be given.
  Let us place the actuators either as in~\eqref{Dmxe} or as in~\eqref{Duni}.
  Then, the nonautonomous closed loop system 
\begin{equation}\label{sys-y-parDir}
  \fractx{\p}{\p t} y -\nu\Delta y +ay-P_ {U_M}^{{}^{\tt d}\!E_M^\perp}\left(-\nu\Delta y +ay-\lambda y\right)=0,\quad    y(0)=y_0,\quad
      y(0)=0=y(L)
\end{equation}
is stable, for~$M$ large enough, say for
\[
 M+1\ge \frac{1}{\sqrt\nu}\frac{L}{\pi}
\left(6+\left(\frac{\sqrt{r}\pi}{\sin\left(\frac{r\pi}{2}\right)}\right)^2\right)^\frac12\norm{a\Id}{L^\infty((0,+\infty),\LL(L^2(0,L),H^{-1}(0,L)))}.
\]
\end{corollary}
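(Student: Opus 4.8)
The plan is to obtain the corollary by feeding the explicit operator-norm formulas of Main Theorems~\ref{MT:Dirmxe} and~\ref{MT:Diruni} into the sufficient stabilisation condition~\eqref{suffcond}. The first step is to extract from those theorems a bound on the projection norm that is \emph{uniform} in~$M$. Both theorems assert that the sequence~$\opnorm{P_{U(c^M)}^{{}^{\tt d}\!E_M^\perp}}{}$ increases and converges to~$\frac{\sqrt r\pi}{2\sin(r\pi/2)}$; since a monotone increasing convergent sequence stays strictly below a limit it does not attain at any finite index, we get
\[
\opnorm{P_{U(c^M)}^{{}^{\tt d}\!E_M^\perp}}{}<\frac{\sqrt r\pi}{2\sin\left(\frac{r\pi}{2}\right)}\qquad\text{for every admissible }M,
\]
and hence the $M$-independent bound $4\opnorm{P_{U(c^M)}^{{}^{\tt d}\!E_M^\perp}}{}^2<\left(\frac{\sqrt r\pi}{\sin(r\pi/2)}\right)^2$.

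Next I would substitute this bound into~\eqref{suffcond}. Replacing the $M$-dependent term $4\opnorm{\cdot}{}^2$ by the larger $M$-independent constant above shows that~\eqref{suffcond} is implied by
\[
\nu\alpha_{M+1}>\left(6+\left(\frac{\sqrt r\pi}{\sin\left(\frac{r\pi}{2}\right)}\right)^2\right)\norm{a\Id}{L^\infty}^2 .
\]
Inserting the Dirichlet eigenvalue $\alpha_{M+1}=\left(\frac{\pi}{L}\right)^2(M+1)^2$ and solving this quadratic inequality for the positive quantity~$M+1$ reproduces exactly the threshold stated in the corollary. Because the projection norm is \emph{strictly} below its limit for every finite~$M$, the non-strict hypothesis $M+1\ge(\text{threshold})$ already delivers the strict inequality demanded by~\eqref{suffcond}, so no boundary case is lost in passing from the limit to finite~$M$.

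The only remaining ingredient is the well-posedness of the oblique projection, namely the splitting~\eqref{eq:splitting}; this is already part of Main Theorems~\ref{MT:Dirmxe} and~\ref{MT:Diruni}, whose norm formulas presuppose that $P_{U(c^M)}^{{}^{\tt d}\!E_M^\perp}$ is a genuine projection. With both~\eqref{eq:splitting} and~\eqref{suffcond} in force, the stabilisation criterion behind the feedback~\eqref{FeedKy} yields stability of the closed-loop system~\eqref{sys-y-parDir}. For the placement~\eqref{Dmxe} this completes the argument; for the placement~\eqref{Duni} one must additionally respect the admissibility range $M\ge\frac{r}{1-r}$ coming from the disjointness requirement~\eqref{act-disj}, under which alone the norm formula, its monotonicity, and its limit are asserted. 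There the stated threshold is to be read as requiring $M$ large enough to satisfy both $M\ge\frac{r}{1-r}$ and $M+1\ge(\text{threshold})$, which holds for all sufficiently large~$M$.

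I do not expect a serious obstacle: the analytic work is carried entirely by the Main Theorems, and what is left is an elementary substitution and a one-variable inequality. The only points needing a little care are the strict-versus-non-strict inequality handled above and, for the uniform placement, keeping the admissibility window $M\ge\frac{r}{1-r}$ in view.
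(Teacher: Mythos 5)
Your proposal is correct and follows exactly the route the paper intends: the paper states this corollary as an immediate consequence of the sufficient condition~\eqref{suffcond} together with Main Theorems~\ref{MT:Dirmxe} and~\ref{MT:Diruni}, which is precisely your substitution of the uniform bound $4\opnorm{P_{U(c^M)}^{{}^{\tt d}\!E_M^\perp}}{}^2<\bigl(\sqrt{r}\pi/\sin(r\pi/2)\bigr)^2$ and the Dirichlet eigenvalue $\alpha_{M+1}=(\pi/L)^2(M+1)^2$ into~\eqref{suffcond}. Your extra care about strict versus non-strict inequalities and the admissibility window $M\ge r/(1-r)$ for~\eqref{Duni} is sound and, if anything, slightly more explicit than the paper.
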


 Using~\eqref{suffcond} and Main Theorems~\ref{MT:Neumxe} we have the following.
\begin{corollary}
  Let $L>0$, ~$\lambda>0$, and~$\nu>0$ be given positive real numbers, and let $a\in L^\infty((0,L)\times(0,+\infty)))$ be given.
  Let us place the actuators as in~\eqref{Dmxe}. Then, the nonautonomous closed loop system 
\begin{equation}\label{sys-y-parNeu}
  \fractx{\p}{\p t} y -\nu\Delta y +ay-P_ {U_M}^{{}^{\tt n}\!E_M^\perp}\left(-\nu\Delta y +ay-\lambda y\right)=0,\quad    y(0)=y_0,\quad
      \fractx{\p}{\p{x}}y\rest{x=0}=0=\fractx{\p}{\p{x}}y\rest{x=L}
\end{equation}
is stable, for~$M$ large enough, say for
\[
 M\ge \frac{1}{\sqrt\nu}\frac{L}{\pi}
\left(6+\left(\frac{\sqrt{r}\pi}{\sin\left(\frac{r\pi}{2}\right)}\right)^2\right)^\frac12\norm{a\Id}{L^\infty((0,+\infty),\LL(L^2(0,L),H^{-1}(0,L)))}.
\]
\end{corollary}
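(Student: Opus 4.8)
The plan is to specialise the sufficient stability condition~\eqref{suffcond}, recalled from~\cite{KunRod-pp17}, using the explicit operator-norm formula of Main Theorem~\ref{MT:Neumxe}; the corollary is then essentially an arithmetic substitution. First I would invoke Main Theorem~\ref{MT:Neumxe} to the effect that, for the placement~\eqref{Dmxe}, the oblique projection $P_{U_M}^{{}^{\tt n}\!E_M^\perp}$ is well defined (which is precisely the splitting condition~\eqref{eq:splitting}) and that the sequence $\opnorm{P_{U_M}^{{}^{\tt n}\!E_M^\perp}}{}$ increases and converges to $\frac{\sqrt r\pi}{2\sin\left(\frac{r\pi}{2}\right)}$. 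Consequently, for every $M\ge2$,
\[
 6+4\opnorm{P_{U_M}^{{}^{\tt n}\!E_M^\perp}}{}^2
 \le 6+4\left(\frac{\sqrt r\pi}{2\sin\left(\frac{r\pi}{2}\right)}\right)^2
 = 6+\left(\frac{\sqrt r\pi}{\sin\left(\frac{r\pi}{2}\right)}\right)^2,
\]
the last equality being nothing more than the identity $4\cdot\frac14=1$.

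Next I would insert the Neumann eigenvalue $\alpha_{M+1}={}^{\tt n}\!\alpha_{M+1}=\left(\frac{\pi}{L}\right)^2M^2$, recalled in the Remark above, into the left-hand side of~\eqref{suffcond}. The stated threshold on $M$ is exactly the condition obtained by requiring
\[
 \nu\left(\frac{\pi}{L}\right)^2M^2
 \ge\left(6+\left(\frac{\sqrt r\pi}{\sin\left(\frac{r\pi}{2}\right)}\right)^2\right)
 \norm{a\Id}{L^\infty((0,+\infty),\LL(L^2(0,L),H^{-1}(0,L)))}^2
\]
and solving for $M$ by taking square roots. Combining this display with the previous one yields $\nu\alpha_{M+1}\ge\bigl(6+4\opnorm{P_{U_M}^{{}^{\tt n}\!E_M^\perp}}{}^2\bigr)\norm{a\Id}{}^2$, which is~\eqref{suffcond}. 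Since~\eqref{eq:splitting} together with~\eqref{suffcond} implies stability of the closed-loop system~\eqref{sys-y-parNeu} by the result of~\cite{KunRod-pp17} recalled in the Remark above, the corollary follows.

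The only point requiring a little care — and it is a minor one — is the passage between the strict inequality in~\eqref{suffcond} and the non-strict ``$\ge$'' in the threshold on $M$. This causes no difficulty because $\opnorm{P_{U_M}^{{}^{\tt n}\!E_M^\perp}}{}$ is strictly below its limiting value for each finite $M$, so replacing the true norm by the limit on the right-hand side of~\eqref{suffcond} preserves strictness even when the $M$-threshold is attained with equality. I would also note in passing that $\norm{a\Id}{L^\infty((0,+\infty),\LL(L^2(0,L),H^{-1}(0,L)))}$ is finite, since $a\in L^\infty((0,L)\times(0,+\infty))$ and $L^2(0,L)\hookrightarrow H^{-1}(0,L)$ continuously, so that the displayed threshold is a genuine finite number. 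I do not expect any substantial obstacle: the corollary is a direct consequence of Main Theorem~\ref{MT:Neumxe} and~\eqref{suffcond}, the latter doing all the analytic work upstream.
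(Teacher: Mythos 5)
Your proposal is correct and follows exactly the route the paper intends: the paper gives no separate proof of this corollary beyond the phrase ``Using~\eqref{suffcond} and Main Theorem~\ref{MT:Neumxe}'', and your argument --- bounding $\opnorm{P_{U_M}^{{}^{\tt n}\!E_M^\perp}}{}^2$ by its limit $\bigl(\tfrac{\sqrt r\pi}{2\sin(r\pi/2)}\bigr)^2$, inserting ${}^{\tt n}\!\alpha_{M+1}=(\pi/L)^2M^2$, and solving for $M$ --- is precisely that substitution, with the added (correct) care about strictness of the inequality in~\eqref{suffcond}.
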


\begin{remark}\label{R:samelimnormOP}
Notice that the limit of the operator norm of the oblique projections is the same in
 Main Theorems~\ref{MT:Dirmxe}, ~\ref{MT:Diruni}~\ref{MT:Neumxe}. 
 In spite of this observation, we
do not know the value of
~$\lim\limits_{M\to+\infty}\opnorm{P_{{U_M}}^{{}^{\tt n}\!E_M^{\perp}}}{}$ for the location as in~\eqref{Duni}.
Numerical simulations we shall present later on, do not allow us to conclude that the limit exists,
that is, they suggest that~$\opnorm{P_{{U_M}}^{{}^{\tt n}\!E_M^{\perp}}}{}$ could go to infinity as~$M$ increases. What is clear from the simulations
is that, even in
case the operator norm of the oblique projection would remain bounded, 
the value of the limit will be considerably larger
for the location~\eqref{Duni} than for the location~\eqref{Dmxe}.
This is a remarkable difference between Neumann and Dirichlet boundary conditions.
\end{remark}

\section{Basic properties of oblique projections}\label{S:oblique_proj}

We are going to use oblique  projection operators associated with a suitable direct sum splitting of a given (real) Hilbert space~$H$.
In what follows~$\bbR^{n\times m}$ stands for the linear space of matrices with~$n$ rows and~$m$ columns, and with real entries. For simplicity, we will also
identify a given element $z=(z_1,z_2,\dots,z_m)\in\bbR^m$ with
the column matrix~$z=\begin{bmatrix}z_1&z_2&\dots&z_m\end{bmatrix}^\top\in\bbR^{m\times1}$.
 The notation~$\MM^\top$ stands for the transpose of the matrix~$\MM$.
\begin{definition}
  Let $H=F\oplus E$ be a direct sum of two closed subspaces $F$ and~$E$ of~$H$. The \emph{oblique projection} onto~$F$ along~$E$ will be denoted
\[
 P_{F}^{E}\colon H\to F,\quad x\mapsto x_{F},
\]
where~$x_{F}$ is uniquely defined by
\[
 x=x_{F}+x_{E}\quad\mbox{and}\quad (x_{F},x_{E})\in F\times E.
\]
\end{definition}

Observe that $P_{F}^{E}$ is an \emph{orthogonal projection} if, and only if,~$E=F^\perp$. In such case we simply write $P_{F}\coloneqq P_{F}^{F^\perp}.$

\begin{remark}\label{R:contproj}
Since the spaces $E$ and $F$ are closed, it follows from the closed graph theorem that the projection $P_{F}^{E}$ is continuous. 
In addition~$P_{E}^{F}=\Id-P_{F}^{E}$, where $\Id:H\to H$ denotes the identity mapping on $H$, $\Id(x) := x$. 
\end{remark}

 Henceforth, let us fix two ordered sets  $\Cf := (f_1,f_2,\dots,f_M)\subset H$ and~$ \Cg:=(g_1,g_2,\dots,g_M)\subset H$, each being
 linearly independent.
 The associated $M$-dimensional subspaces are
 denoted $F\coloneqq\linspan\{f_1,f_2,\dots,f_M\}$ and~$ G\coloneqq\linspan\{g_1,g_2,\dots,g_M\}$. With the two given ordered sets $\Cf$ and $\Cg$, we associate the matrix 
\begin{equation}\label{MatGF}
  [(\Cg,\Cf)_H]\coloneqq[(g_i,f_j)_H]\in\bbR^{M\times M}  
\end{equation}
whose $(i,j)$-entry is~$(g_i,f_j)_H$. For any given~$y\in H$ we also denote the row and column vector matrices
\[
  [(y,\Cf)_H] \coloneqq \begin{bmatrix}(y,f_1)_H&(y,f_2)_H&\dots&(y,f_M)_H\end{bmatrix} \in \bbR^{1\times M} \quad\mbox{and}\quad
  [(\Cg, y)_H] \coloneqq [(y,\Cg)_H]^\top\in \bbR^{M\times 1}.
\]

The following lemma characterizes the direct sum $F\oplus G^\perp$ in terms of the matrix $[(\Cg,\Cf)_H]$; see \cite{KunRod-pp17}. 
\begin{lemma}\label{L:equiv_nop}
 The following conditions are equivalent:
 \begin{enumerate}[noitemsep,topsep=5pt,parsep=5pt,partopsep=0pt,leftmargin=2em]%
\renewcommand{\theenumi}{\rm ({\sf\alph{enumi}})} 
 \renewcommand{\labelenumi}{}
 
\item \theenumi:\label{eqsum} $H=F\oplus G^\perp$,

\item \theenumi:\label{eqmat} $[( \Cg,\Cf)_H]$ is invertible.

\end{enumerate}

In either case the projection~$P_F^{G^\perp}x$ of a vector~$x\in H$, is given by~$\Cp $ defined as follows 
\begin{equation}\label{eq:proj_form}
\Cp x\coloneqq \sum_{j=1}^M \alpha_j f_j, \quad \text{ where }\underline{\alpha}\in
\bbR^M \text{ solves} \quad [(\Cg,\Cf)_H]\underline{\alpha} = [(\Cg,x)_H].
\end{equation}
\end{lemma}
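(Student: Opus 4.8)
The plan is to reduce the whole statement to a finite-dimensional linear-algebra fact about the matrix $[(\Cg,\Cf)_H]$, and to extract the direct-sum decomposition together with the projection formula from a single explicit construction. Since $\Cf$ is linearly independent, every element of $F$ can be written uniquely as $\sum_{j=1}^M\beta_jf_j$ for some $\underline{\beta}\in\R^M$, and such an element lies in $G^\perp$ exactly when $(g_i,\sum_j\beta_jf_j)_H=0$ for each $i$, that is, when $[(\Cg,\Cf)_H]\underline{\beta}=0$. Hence $F\cap G^\perp=\{0\}$ if and only if $\Ker[(\Cg,\Cf)_H]=\{0\}$; because $[(\Cg,\Cf)_H]$ is a square matrix, this kernel condition is equivalent to its invertibility. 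This observation carries the bulk of the equivalence.

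Next I would prove \ref{eqmat} $\Rightarrow$ \ref{eqsum} by an explicit construction that simultaneously yields the formula~\eqref{eq:proj_form}. Assuming $[(\Cg,\Cf)_H]$ invertible, for an arbitrary $x\in H$ set $\underline{\alpha}:=[(\Cg,\Cf)_H]^{-1}[(\Cg,x)_H]$ and $x_F:=\sum_{j=1}^M\alpha_jf_j\in F$. By the very choice of $\underline{\alpha}$ one has $(g_i,x-x_F)_H=(g_i,x)_H-\sum_j\alpha_j(g_i,f_j)_H=0$ for every $i$, so $x-x_F\in G^\perp$. Thus $x=x_F+(x-x_F)\in F+G^\perp$, proving $F+G^\perp=H$; combined with $F\cap G^\perp=\{0\}$ from the first paragraph this yields the direct sum $H=F\oplus G^\perp$. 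Uniqueness of the decomposition then identifies $x_F$ with $P_F^{G^\perp}x$, i.e. $P_F^{G^\perp}=\Cp$ in the notation of~\eqref{eq:proj_form}, and continuity of $\Cp$ comes for free from Remark~\ref{R:contproj}.

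For the converse \ref{eqsum} $\Rightarrow$ \ref{eqmat} I would simply observe that $H=F\oplus G^\perp$ forces $F\cap G^\perp=\{0\}$, whence the matrix is invertible by the characterization of the first paragraph. The one point needing care, and the step I would flag as the main obstacle, is establishing the surjectivity $F+G^\perp=H$: since $G^\perp$ is in general infinite-dimensional, no naive dimension count is available. The explicit solve-the-system argument above sidesteps this difficulty entirely, and as a bonus produces the projection formula, so in the end no genuine analytical difficulty remains beyond the bookkeeping of translating between the coordinate vector $\underline{\beta}$ and the vector $\sum_j\beta_jf_j\in F$, which is exactly where the linear independence of $\Cf$ is used.
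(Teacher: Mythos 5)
Your proposal is correct and follows essentially the same route as the paper: the direction (b)$\Rightarrow$(a) is the same explicit construction $\underline{\alpha}=[(\Cg,\Cf)_H]^{-1}[(\Cg,x)_H]$ yielding the splitting and the formula~\eqref{eq:proj_form}, and the direction (a)$\Rightarrow$(b) is the same kernel argument using the linear independence of $\Cf$. The only (cosmetic) difference is that you isolate the equivalence $F\cap G^\perp=\{0\}\Leftrightarrow\Ker[(\Cg,\Cf)_H]=\{0\}$ as a shared observation, whereas the paper derives the triviality of the intersection from $\Cp y=y$ on $F$ and $\Cp z=0$ on $G^\perp$.
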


 \begin{proof}
   $(a)\Longleftarrow (b)$: Suppose first that $[(\Cg,\Cf)_H]$ is invertible and let $x\in H$ be given. Consider the unique solution $\underline{\alpha}  =(\alpha_1,\ldots, \alpha_M)\in \bbR^M$ of 
   \begin{equation}\label{eq:fg}
   [(\Cg,\Cf)_H]\underline\alpha = [(\Cg,x)_H]
   \end{equation}
   and define $f:= \sum_{j=1}^M \alpha_j f_j$.  By construction $f\in \Cf$ and $[(\Cg,\Cf)_H]\underline\alpha =  [(\Cg,f)_H]$. We also have $g^\perp:=x-f\in G^\perp$,  because
   \[
     [(\Cg,g^\perp)_H]\stackrel{\eqref{eq:fg}}{=}[(\Cg,\Cf)_H]\underline{\alpha}-[(\Cg,f)_H]=0.        
   \]
   Therefore we have the splitting $x=f+g^\perp$ for $f\in F$ and $g^\perp \in G^\perp$.
    Finally since $\Cp y=y$ for all $y\in F$ and $\Cp z=0$ for all $z\in G^\perp$, 
   we conclude that~$F\bigcap G^\perp=\{0\}$. This shows (a) and $P_F^{G^\perp}=\Cp$.
 
   $(a)\Longrightarrow (b)$: Conversely if $H=F\oplus G^\perp$, then we have for any $\underline \alpha = (\alpha_1,\ldots, \alpha_M)\in \bbR^M$  
 \begin{align*}
   [(\Cg,\Cf)_H]\underline\alpha=0 \quad & \Longleftrightarrow \quad \left(g_i,\textstyle\sum\limits_{j=1}^M \alpha_j f_j\right)_H = 0,
   \quad\mbox{for all}\quad i\in\{1,2,\dots, M\} \\
 				       & \Longleftrightarrow \quad \sum_{j=1}^M \alpha_j f_j \in G^\perp\textstyle\bigcap F 
   \quad \stackrel{F\bigcap G^\perp=\{0\}}{\Longleftrightarrow} \quad  \sum_{j=1}^M \alpha_j f_j=0 
 				        \quad \Longleftrightarrow \quad \underline{\alpha}=0,
 \end{align*}
 which shows that $[(\Cg,\Cf)_H]$ is injective and hence invertible.
 \end{proof}

\begin{remark}\label{R:orthprojoptim}
  If $F=G$, i.e., when $P_F^{G^\perp}$ is the orthogonal projection onto~$F$, then the equations on the right hand side of \eqref{eq:proj_form} are equivalent to the optimality conditions of the minimisation problem
  \begin{equation*}
  \inf_{v\in F}\norm{x-v}{H}^2
  \end{equation*}
  and, in particular, $P_Fx = \argmin_{F} \norm{x-\cdot}{H}^2$.
\end{remark}

\begin{corollary}\label{C:norm_proj}
If~$H=F\oplus G^\perp$, then
\begin{equation}\label{eq:proj_form_basis}
 \norm{P_{F}^{ G^{\perp}}}{\LL(H)}^2= \sup_{\underline{\alpha}^\top [(\Cg,\Cg)_H]\underline{\alpha}\le 1} |[(\Cg,\Cf)_H]^{-1}[(\Cg,\Cg)_H]\underline{\alpha}|^2_F,
\end{equation}
where $|\underline\alpha|_F^2 := \underline\alpha^\top [(\Cf,\Cf)_H]\underline\alpha$. If, in addition, each of the sets  $\Cf$ and $\Cg$ is orthonormal, then
\begin{equation}\label{eq:proj_form_basis2}
\norm{P_{F}^{ G^{\perp}}}{\LL(H)}^2=\left( \min\limits_\theta\left\{\theta\mbox{ is an eigenvalue of } [( \Cg,\Cf)_H][(\Cf, \Cg)_H]\right\}\right)^{-1}.
\end{equation}
\end{corollary}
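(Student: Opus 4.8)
The plan is to reduce the computation of $\norm{P_F^{G^\perp}}{\LL(H)}$ to a supremum over inputs lying in the finite-dimensional subspace $G$, and then to read off both formulas from Lemma~\ref{L:equiv_nop}. The starting point is the observation that, by~\eqref{eq:proj_form}, the vector $P_F^{G^\perp}x=\sum_{j}\beta_jf_j$ depends on $x$ only through the column vector $[(\Cg,x)_H]$, and that this column vector is unchanged when $x$ is replaced by its orthogonal projection $P_Gx$ onto $G$ (since $(g_i,x)_H=(g_i,P_Gx)_H$ for every $i$). Hence $P_F^{G^\perp}x=P_F^{G^\perp}(P_Gx)$, while $\norm{P_Gx}{H}\le\norm{x}{H}$. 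First I would use this to note that replacing $x$ by $P_Gx$ leaves the numerator of the quotient $\norm{P_F^{G^\perp}x}{H}/\norm{x}{H}$ unchanged and does not increase its denominator, and that inputs with $P_Gx=0$ give a vanishing quotient; this yields
\[
  \norm{P_F^{G^\perp}}{\LL(H)}^2=\sup_{x\in G\setminus\{0\}}\frac{\norm{P_F^{G^\perp}x}{H}^2}{\norm{x}{H}^2}.
\]

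Next I would parametrise the inputs in $G$. Writing $x=\sum_{i=1}^M\alpha_ig_i$ with $\underline\alpha\in\bbR^M$, we have $\norm{x}{H}^2=\underline\alpha^\top[(\Cg,\Cg)_H]\underline\alpha$ and $[(\Cg,x)_H]=[(\Cg,\Cg)_H]\underline\alpha$, so that by~\eqref{eq:proj_form} the coefficient vector of $P_F^{G^\perp}x$ is $\underline\beta=[(\Cg,\Cf)_H]^{-1}[(\Cg,\Cg)_H]\underline\alpha$ and $\norm{P_F^{G^\perp}x}{H}^2=|\underline\beta|_F^2$. Because numerator and denominator are both homogeneous of degree two in $\underline\alpha$, and because $[(\Cg,\Cg)_H]$ is symmetric positive definite (the set $\Cg$ being linearly independent), so that the constraint set $\{\underline\alpha^\top[(\Cg,\Cg)_H]\underline\alpha\le1\}$ is a compact ellipsoid on which the supremum is attained, the supremum may be taken over this set; this is precisely~\eqref{eq:proj_form_basis}.

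Finally, for the orthonormal case I would substitute $[(\Cg,\Cg)_H]=[(\Cf,\Cf)_H]=\Imat$. Writing $\Mmat:=[(\Cg,\Cf)_H]$, formula~\eqref{eq:proj_form_basis} collapses to $\sup_{|\underline\alpha|\le1}|\Mmat^{-1}\underline\alpha|^2=\sup_{|\underline\alpha|\le1}\underline\alpha^\top(\Mmat^{-1})^\top\Mmat^{-1}\underline\alpha$, which is the largest eigenvalue of the symmetric positive definite matrix $(\Mmat^{-1})^\top\Mmat^{-1}=(\Mmat\Mmat^\top)^{-1}$. Since $\Mmat\Mmat^\top=[(\Cg,\Cf)_H][(\Cf,\Cg)_H]$ is itself symmetric positive definite (as $\Mmat$ is invertible), the largest eigenvalue of its inverse equals the reciprocal of the smallest eigenvalue of $\Mmat\Mmat^\top$, which is exactly~\eqref{eq:proj_form_basis2}.

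I expect the only genuinely non-routine step to be the reduction to inputs in $G$ in the first paragraph: one must verify carefully the identity $P_F^{G^\perp}x=P_F^{G^\perp}(P_Gx)$ and account for the degenerate inputs with $P_Gx=0$, so that passing from the supremum over all of $H$ to the supremum over $G$ loses nothing. Once this is in place, the remaining steps are routine manipulations of the Gram matrices $[(\Cf,\Cf)_H]$, $[(\Cg,\Cg)_H]$ and of $[(\Cg,\Cf)_H]$, together with the standard variational characterisation of the extreme eigenvalues of a symmetric positive definite matrix.
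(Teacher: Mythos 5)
Your proposal is correct and follows essentially the same route as the paper: reduce the supremum to inputs in $G$ using the orthogonal decomposition $H=G\oplus G^\perp$ (the paper phrases this as $(\Cg,y)_H=0$ for $y\in G^\perp$, which is exactly your identity $P_F^{G^\perp}x=P_F^{G^\perp}(P_Gx)$), parametrise $x\in G$ by its coefficient vector so that $[(\Cg,x)_H]=[(\Cg,\Cg)_H]\underline\alpha$, and conclude the orthonormal case via the Rayleigh-quotient characterisation of the extreme eigenvalues of $[(\Cg,\Cf)_H][(\Cf,\Cg)_H]$. Your write-up is if anything slightly more explicit than the paper's on the reduction step and on why the inverse Gram matrix identity $(\Mmat^{-1})^\top\Mmat^{-1}=(\Mmat\Mmat^\top)^{-1}$ yields the reciprocal of the smallest eigenvalue.
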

\begin{proof}
We obtain from \eqref{eq:proj_form},   
\begin{equation*}
|P_F^{G^\perp}x|^2 = \sum_{i,j=1}^M \alpha_i\alpha_j(f_i,f_j) = \underline \alpha^\top [(\Cf,\Cf)_H]
\underline\alpha \stackrel{\eqref{eq:proj_form}}{=} |[(\Cg,\Cf)_H]^{-1}[(\Cg,x)_H]|^2_F, \quad x\in H.
\end{equation*}
In view of $H=G\oplus G^\perp$ and~$(\Cg,y)_H=0$, for all~$y\in G^\perp$, this yields
\begin{equation}\label{eq:proj_orth_basis}
 \norm{P_{F}^{ G^{\perp}}}{\LL(H)}^2 = \sup_{\substack{x\in G\\\norm{x}{H}\le 1}} |[(\Cg,\Cf)_H]^{-1}[(\Cg,x)_H]|^2_F.
\end{equation}
Now notice that $|x|^2 = \sum_{i,j=1}^M \alpha_i \alpha_j(g_i,g_j)_H$ with $\underline{\alpha} = [(\Cg,\Cg)_H]^{-1}[(\Cg,x)_H]$ for all $x\in G$. Therefore \eqref{eq:proj_orth_basis} is equivalent to
\begin{equation*}
\norm{P_{F}^{ G^{\perp}}}{\LL(H)}^2 = \sup_{\underline{\alpha}^\top [(\Cg,\Cg)_H]\underline{\alpha}\le 1} |[(\Cg,\Cf)_H]^{-1}[(\Cg,\Cg)_H]\underline{\alpha}|^2_F,
\end{equation*}
which is \eqref{eq:proj_form_basis}.

To show \eqref{eq:proj_form_basis2} notice that since $\Cf$ and $\Cg$ are orthonormal,
we have $[(\Cf,\Cf)_H]=[(\Cg,\Cg)_H]=I_M$, where $I_M\in \bbR^{M\times M}$ denotes the identity matrix.
Hence \eqref{eq:proj_form_basis2} follows from formula \eqref{eq:proj_form_basis}
and the Rayleigh quotient formula for the first eigenvalue of a matrix.
\end{proof}

\begin{figure}[h!]
\begin{minipage}{.1\textwidth}
\quad{}
\end{minipage}
\begin{minipage}{.4\textwidth}
\setlength{\unitlength}{4mm}
%
\begin{picture}(11,11)(-5,-5)
\definecolor{lightgray}{rgb}{.8,.8,.8}%
\definecolor{darkgray}{rgb}{.3,.3,.3}%
\linethickness{.5pt}%
{\color{darkgray}\put(0,0){\circle{6}}}%
\linethickness{1pt}%
\moveto(0,-5)
\lineto(0,5)
\strokepath
\moveto(-5,0)
\lineto(5,0)
\strokepath
\moveto(-5,-2)
\lineto(5,2)
{\color{blue}
\strokepath
}
%
%
\moveto(-3,0)
\lineto(-3,-1.2)
{\color{red}
\strokepath
}
\put(-3,-1.2){\makebox(0,0){\red\scriptsize $\bullet$}}
\put(-2.6,0.4){\makebox(0,0){\red $x$}}
\put(-3,-1.75){\makebox(0,0){\red $x_F$}}
\moveto(1.5,2.5881)
\lineto(1.5,.6)
{\color{green}
\strokepath
}
\put(1.5,.6){\makebox(0,0){\green\scriptsize $\bullet$}}
\put(1.8,2.8){\makebox(0,0){\green $y$}}
\put(2.1,.4){\makebox(0,0){\green $y_F$}}
\moveto(0.7765,-2.8978)
\lineto(0.7765,.3106)
{\color{orange}
\strokepath
}
\put(0.7765,.3106){\makebox(0,0){\orange\scriptsize $\bullet$}}
\put(0.7765,-3.4){\makebox(0,0){\orange $z$}}
\put(0.7,.75){\makebox(0,0){\orange $z_F$}}

\put(0,5.5){\makebox(0,0){$G^\perp$}}
\put(5.5,0){\makebox(0,0){$G$}}
\put(5.5,2){\makebox(0,0){\blue $F$}}

\end{picture}
\end{minipage}
\begin{minipage}{.4\textwidth}
\setlength{\unitlength}{4mm}
%
\begin{picture}(11,11)(-5,-5)
\definecolor{lightgray}{rgb}{.8,.8,.8}%
\definecolor{darkgray}{rgb}{.3,.3,.3}%
\linethickness{1pt}%
\moveto(0,-5)
\lineto(0,5)
\strokepath
\moveto(-5,0)
\lineto(5,0)
\strokepath
\moveto(-5,-2)
\lineto(5,2)
{\color{blue}
\strokepath
}

\put(4,5.5){\makebox(0,0){\cyan $(P_{F}^{G^\perp})^{-1}(\{f\})$}}
\put(4.5,1.2){\makebox(0,0){$f$}}
\put(4,1.6){\makebox(0,0){\cyan\scriptsize $\bullet$}}
\moveto(4,-5)
\lineto(4,5)
{\color{cyan}
\strokepath
}

\moveto(-5,.5)
\lineto(5,4.5)
{\color{magenta}
\strokepath
}

\put(-4,2.5){\makebox(0,0){\color{magenta} $(P_{G^\perp}^{F})^{-1}(\{g\})$}}
\put(-.5,1.75){\makebox(0,0){$g$}}
\put(0,2.5){\makebox(0,0){\magenta\scriptsize $\bullet$}}

\put(0,5.5){\makebox(0,0){$G^\perp$}}
\put(5.5,0){\makebox(0,0){$G$}}
\put(5.5,2){\makebox(0,0){\blue $F$}}

\end{picture}
\end{minipage}
\caption{Oblique projection onto~$F$ along~$G^\perp$.}\label{fig.obprojR2}
 \end{figure}
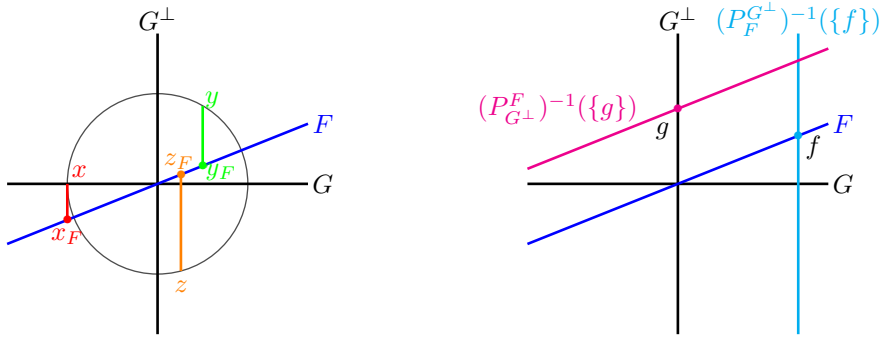

Figure~\ref{fig.obprojR2} illustrates the used terminology of projection onto $F$ along $G^\perp$. The point $x\in H$ is mapped to the unique
point~ $x_F\coloneqq P_F^{G^\perp}x$ in the intersection of~$F$ with the affine space~$x+G^\perp$, which contains $x$ and is parallel to $G^\perp$. The figure
also illustrates the
fact that an oblique nonorthogonal projection
has an operator norm larger than~$1$, for example, in the figure we see that
$\norm{x_F}{H}>\norm{x}{H}$.
That is, we have the following result.
\begin{lemma}\label{lem:geom_illu_proj}
Let~$H=F\oplus G^\perp$. Then the oblique projection onto~$F$ along~$G^\perp$ has the following properties:
\begin{itemize}
  \item[(a)] for any~$x\in H$, the intersection $(x+G^\perp)\textstyle\bigcap F$ consists of the single element $P_F^{G^\perp}x$,
  \item[(b)]  if~$F\ne G$, then~$\norm{P_F^{G^\perp}}{\LL(H)}>1$.
\end{itemize}
\end{lemma}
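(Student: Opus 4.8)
The two parts are essentially independent, so I would treat them separately: part~(a) is a direct consequence of the direct-sum structure, while part~(b) requires a single well-chosen test vector.

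For part~(a) the plan is to work straight from the defining splitting. Given $x\in H$, write $x=x_F+x_{G^\perp}$ with $x_F=P_F^{G^\perp}x\in F$ and $x_{G^\perp}\in G^\perp$. Since $-x_{G^\perp}\in G^\perp$, the vector $x_F=x-x_{G^\perp}$ lies in $x+G^\perp$ and, by construction, also in $F$; hence $x_F\in(x+G^\perp)\cap F$. For uniqueness I would take any $h\in G^\perp$ with $x+h\in F$ and write $x+h=x_F+(x_{G^\perp}+h)$, so that $x_{G^\perp}+h=(x+h)-x_F\in F$ while simultaneously $x_{G^\perp}+h\in G^\perp$. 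Because the sum $F\oplus G^\perp$ is direct we have $F\cap G^\perp=\{0\}$, whence $x_{G^\perp}+h=0$ and $x+h=x_F$. This shows the intersection is exactly the singleton $\{P_F^{G^\perp}x\}$.

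For part~(b) the plan is first to convert the hypothesis $F\ne G$ into a statement about non-orthogonality of $F$ and the kernel $G^\perp$, and then to exhibit one vector on which $P:=P_F^{G^\perp}$ strictly increases the norm. By the remark recorded just after the definition of the oblique projection, $P$ is an orthogonal projection precisely when $G^\perp=F^\perp$; as $F$ and $G$ are closed this is equivalent to $F=G$. Thus $F\ne G$ guarantees that $F$ is \emph{not} orthogonal to $G^\perp$: were $F\perp G^\perp$, we would get $F\subseteq(G^\perp)^\perp=G$, and since $\dim F=\dim G=M$ this would force $F=G$. Consequently there exist unit vectors $f\in F$ and $n\in G^\perp$ with $c:=(f,n)_H\ne0$.

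With such $f$ and $n$ fixed I would simply test $P$ on $x:=f-c\,n$. Using $Pf=f$ and $Pn=0$ we get $Px=f$, so $\norm{Px}{H}^2=1$, whereas $\norm{x}{H}^2=1-2c^2+c^2=1-c^2<1$. Hence $\norm{P}{\LL(H)}^2\ge\norm{Px}{H}^2/\norm{x}{H}^2=(1-c^2)^{-1}>1$, which is~(b). The only genuinely nontrivial step is the deduction of non-orthogonality of $F$ and $G^\perp$ from $F\ne G$; the norm estimate itself is a one-line computation. As an alternative route for~(b), when $\Cf$ and $\Cg$ are orthonormal one can read the strict inequality directly off~\eqref{eq:proj_form_basis2}, since the smallest eigenvalue of $[(\Cg,\Cf)_H][(\Cf,\Cg)_H]$ is the squared cosine of the largest principal angle between $F$ and $G$ and is strictly below $1$ exactly when $F\ne G$.
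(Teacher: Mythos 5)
Your proof is correct and follows essentially the same route as the paper: part~(a) is the direct-sum uniqueness argument, and part~(b) rests on producing a non-orthogonal pair $(f,n)\in F\times G^\perp$ and perturbing $f$ by a multiple of $n$ so that the projection is unchanged while the norm strictly drops (the paper uses the sequence $f-\tfrac1n g^\perp$ for large $n$ where you pick the single optimal multiple $f-c\,n$, and you in fact justify the existence of the non-orthogonal pair more explicitly than the paper does). The only detail worth adding is that $\norm{x}{H}^2=1-c^2>0$, which holds because Cauchy--Schwarz is strict here: equality would put $f$ in $F\cap G^\perp=\{0\}$.
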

\begin{proof}
 Let $x\in H$. In view of $H=F\oplus G^\perp$ we have
\begin{align*}
  & y\in (x-G^\perp){\textstyle\bigcap} F \quad 
 \Longleftrightarrow \quad  \exists g^\perp \in G^\perp, \; y=x-g^\perp \in F
  \quad \Longleftrightarrow \quad  P_F^{G^\perp}x = y
\end{align*}
and therefore $(x-G^\perp)\bigcap F$ is the singleton~$\{P_F^{G^\perp}x\}$.

It remains to show that $\norm{P_F^{G^\perp}}{\LL(H)}>1$, if~$F\ne G$. For this it is sufficient to find $x\in H$ such that $|P_F^{G^\perp}x| > |x|$.
Notice also that~$H=F\oplus G^\top$ and~$F\ne G=(G^\perp)^\perp$
   imply that $F\ne\{0\}\ne G^\perp$. In such case there is a pair~$(f, g^\perp)\in F\times G^\perp$ such that~$(f, g^\perp)_H\ne0$. Without loss of generality we may assume that
~$(f, g^\perp)_H>0$ (otherwise consider $-g^\perp$).
Now, we consider the sequence $x^n\coloneqq f-\frac{1}{n}g^\perp$, $n\ge 1$. In view of $H=F\oplus G^\perp$, we have $f=P_F^{G^\perp}x^n$. Moreover, $\norm{x^n}{H}^2=\norm{f}{H}^2+\frac{1}{n^2}\norm{g^\perp}{H}^2
-\frac{2}{n}(f,g^\perp)_{H}$. Therefore we find~$\norm{x^n}{H}^2<\norm{f}{H}^2=\norm{P_F^{G^\perp}x^n}{H}^2$ for all $n>\frac{\norm{g^\perp}{H}^2}{2(f,g^\perp)_{H}}$, which finishes the proof. 
\end{proof}

\begin{remark}
  The hypothesis~$F\ne G$ in Lemma~\ref{lem:geom_illu_proj}(b) simply states that~$H=F\oplus G^\top$ is not an orthogonal sum, that is, the oblique
  projection~$P_F^{G^\perp}$ is a nonorthogonal projection.  Recall that the operator norm of an orthogonal projection on Hilbert spaces
  is always equal to~$1$, that is, $\norm{P_F}{\Cl(H)}=1$ for any nonzero closed space~$F\subseteq H$.
\end{remark}

The next lemma tells us that the adjoint of an oblique projection is still an oblique projection.
\begin{lemma}\label{L:adjProj}
Let~$H$ be a Hilbert space, and let~$F$ and~$G$ be closed subspaces of~$H$ such that~$H=F\oplus G^\perp$. Then the adjoint, in~$\Cl(H)$, of
the oblique projection~$P_{F}^{G^\perp}$ is the oblique projection~$P_{G}^{F^\perp}$.
\end{lemma}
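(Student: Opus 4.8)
The plan is to avoid the matrix formalism and argue directly with the standard Hilbert-space relations between the kernel/range of a bounded operator and those of its adjoint. Throughout write $P:=P_F^{G^\perp}\in\LL(H)$, which is bounded by Remark~\ref{R:contproj}, with defining data $\Ran P=F$ and $\Ker P=G^\perp$.

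First I would record that the claimed operator $P_G^{F^\perp}$ is actually well defined, i.e.\ that $H=G\oplus F^\perp$. By Lemma~\ref{L:equiv_nop} the hypothesis $H=F\oplus G^\perp$ is equivalent to the invertibility of $[(\Cg,\Cf)_H]$; since $[(\Cf,\Cg)_H]=[(\Cg,\Cf)_H]^\top$ is invertible exactly when $[(\Cg,\Cf)_H]$ is, applying Lemma~\ref{L:equiv_nop} with the roles of $\Cf$ and $\Cg$ interchanged yields $H=G\oplus F^\perp$. (Alternatively, this splitting will reappear for free once $P^*$ is shown below to be a bounded idempotent, since every bounded idempotent induces such a direct-sum decomposition.)

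Next comes the core of the argument. Because $P$ is idempotent, $P^2=P$, taking adjoints gives $(P^*)^2=(P^2)^*=P^*$, so $P^*$ is itself a bounded idempotent; in particular $P^*$ is an oblique projection, its range is closed, and $H=\Ran P^*\oplus\Ker P^*$. It then remains to identify this range and kernel. Using the standard identities $\Ker(P^*)=(\Ran P)^\perp$ and $\overline{\Ran(P^*)}=(\Ker P)^\perp$ together with $\Ran P=F$ and $\Ker P=G^\perp$, I obtain
\[
 \Ker P^*=F^\perp,\qquad \Ran P^*=(G^\perp)^\perp=G,
\]
where the last equality uses that $P^*$ has closed range and that $G$ is closed, so $(G^\perp)^\perp=G$. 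Finally, a bounded idempotent is uniquely determined by the complementary pair $(\Ran,\Ker)$: if $Q$ is a bounded idempotent with $\Ran Q=G$, $\Ker Q=F^\perp$ and $H=G\oplus F^\perp$, then for $x=v+w$ with $(v,w)\in G\times F^\perp$ one has $Qx=v$, i.e.\ $Q=P_G^{F^\perp}$. Applying this to $Q=P^*$ gives $P^*=P_G^{F^\perp}$, as claimed.

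I expect the only delicate point to be the range computation: the identity $\overline{\Ran(P^*)}=(\Ker P)^\perp$ a priori only controls the closure, so I must invoke the idempotency of $P^*$ to conclude that $\Ran P^*$ is already closed and hence equals $(\Ker P)^\perp=G$. Everything else is routine bookkeeping with adjoints, together with the elementary fact that a bounded idempotent coincides with the oblique projection determined by its range and kernel.
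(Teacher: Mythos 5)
Your proof is correct, but it is organised around a different mechanism than the paper's. The paper first secures the splitting $H=G\oplus F^\perp$ (via Lemma~\ref{L:equiv_nop}), then identifies $(P_F^{G^\perp})^*$ by direct computation: it checks $(P_F^{G^\perp})^*H\subseteq G$ and $(P_F^{G^\perp})^*F^\perp=\{0\}$, verifies pointwise that $(P_F^{G^\perp})^*g=g$ for every $g\in G$ using $(P_F^{G^\perp}v,g)_H=(v-P_{G^\perp}^{F}v,g)_H=(v,g)_H$, and finally applies the adjoint to the decomposition $h=P_G^{F^\perp}h+P_{F^\perp}^{G}h$. You instead never compute the action of the adjoint on any vector: you observe that $P^*$ is a bounded idempotent, identify $\Ker P^*=(\Ran P)^\perp=F^\perp$ and $\Ran P^*=(\Ker P)^\perp=G$ (the closed-range remark being exactly the right point to flag, since $\Ran Q=\Ker(\Id-Q)$ for an idempotent $Q$), and invoke the uniqueness of a bounded idempotent with prescribed complementary range and kernel. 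Each step is standard and the argument is sound; it is arguably cleaner and more general, since your fallback derivation of $H=G\oplus F^\perp$ from the idempotency of $P^*$ avoids Lemma~\ref{L:equiv_nop} altogether, whereas that lemma is phrased in terms of the Gram-type matrix $[(\Cg,\Cf)_H]$ and hence presupposes finite-dimensional $F$ and $G$ (a caveat that applies equally to the paper's own first line). What the paper's route buys in exchange is self-containedness: it needs only the defining property of the inner product and no facts about adjoints beyond the definition.
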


\begin{proof} 
Let~$h\in H$. From Lemma~\ref{L:equiv_nop} it follows that~$H=G\oplus F^\perp$, which implies that
\begin{subequations}
\begin{equation}\label{adjProj+}
h=P_{G}^{F^\perp}h+P_{F^\perp}^{G}h.
\end{equation}
Now from~$(P_{F}^{G^\perp}v,w)_H=(v,(P_{F}^{G^\perp})^*w)_H$
it follows that (cf.~\cite[Section~6.6, Theorem~3]{Luenberger69})
\begin{equation}\label{adjProjKern}
 (P_{F}^{G^\perp})^*H\subseteq G\qquad\mbox{and}\quad (P_{F}^{G^\perp})^*F^\perp=\{0\}.
\end{equation}
Next we show that
\begin{equation}\label{adjProjRan}
(P_{F}^{G^\perp})^*g=g\quad\mbox{for all}\quad g\in G.
\end{equation}
\end{subequations}
Indeed, if~$g\in G$ then, for all~$v\in H$, it follows
that~$(v,(P_{F}^{G^\perp})^*g)_H=(P_{F}^{G^\perp}v,g)_H=(v-P_{G^\perp}^{F}v,g)_H=(v,g)_H$, that is,~$(P_{F}^{G^\perp})^*g=g$.

Applying~$(P_{F}^{G^\perp})^*$ to both sides of~\eqref{adjProj+}, and using~\eqref{adjProjRan} and~\eqref{adjProjKern},
we obtain
$
(P_{F}^{G^\perp})^*h
=P_{G}^{F^\perp}h 
$,
which shows that~$(P_{F}^{G^\perp})^*=P_{G}^{F^\perp}$. 
\end{proof}

\section{The case of Dirichlet boundary conditions}\label{S:Dirichlet}
Here we prove the Main Theorems~\ref{MT:Dirmxe} and~\ref{MT:Diruni}, concerning the case of Dirichlet boundary conditions.
We will start by giving some remarks on the relation~\eqref{eq:proj_form_basis2} in Corollary~\ref{C:norm_proj}.

\subsection{The matrix~$[{}^{\tt d}\!\Theta(c)]$}
Hereafter we will take~$L=\pi$ and for simplicity we denote shortly~$L^2\coloneqq L^2(0,\pi)$.  We note that there is  essentially  no lack of generality by taking~$L=\pi$. This follows by a rescaling argument which is given in the
Appendix, Sect.~\ref{sS:ProofP:L=pi}.

Consider the ordered set ${}^{\tt d}\!\Ce_M:= ({}^{\tt d}\!e_1,{}^{\tt d}\!e_2,\dots,{}^{\tt d}\!e_M)$ of the first $M$ 
 eigenvalues and normalised eigenfunctions of the Laplacian with Dirichlet boundary conditions as in~\eqref{eigs-Dir}, that is, 
\begin{equation}\label{eigs-Dir-expl}
 {}^{\tt d}\!\alpha_i(x)=i^2\quad\mbox{and}\quad {}^{\tt d}\!e_i(x)=(\fractx{2}{\pi})^\frac{1}{2}\sin(i x),\qquad i\ge 1.
\end{equation}
Consider also the ordered set of the orthonormalised actuators as in~\eqref{act-eqlen} and~\eqref{act-disj},
\begin{equation}\label{eq:U_Morthnor}
\Cu_M(c):= (\overline 1_{\omega_1},\overline 1_{\omega_2},\dots,
\overline 1_{\omega_M}),\qquad \overline 1_{\omega_1}
=(\fractx{M}{r\pi})^\frac{1}{2} 1_{\omega_1}
\end{equation}  

We also denote~${}^{\tt d}\!E_M=\linspan\{{}^{\tt d}\!e_1,{}^{\tt d}\!e_2,\dots,{}^{\tt d}\!e_M\}$ and~$U(c)=\linspan\{1_{\omega_1},1_{\omega_2},\dots,1_{\omega_M}\}$.

Hereafter, We will use the results in Sect.~\ref{S:oblique_proj} with the pair~$(\Cu(c),{}^{\tt d}\!\Ce_M)$ in the role of~$(\Cf,\Cg)$,
with~$(U(c),{}^{\tt d}\!E_M)$ in the role of~$(F,G)$, and with~$L^2$ in the role of~$H$.

\begin{definition}
For each $c\in (0,L)^M$ as in~\eqref{act-eqlen} and~\eqref{act-disj},  with~$\Cu_M(c)$ as in~\eqref{eq:U_Morthnor}, we define 
\begin{equation}\label{ThetaM}
  [{}^{\tt d}\!\Theta(c)]\coloneqq[(\Cu_M(c),{}^{\tt d}\!\Ce_M)_{L^2}]^\top[(\Cu_M(c),{}^{\tt d}\!\Ce_M)_{L^2}].
\end{equation}
Further, the set of eigenvalues~$[{}^{\tt d}\!\Theta(c)]$ is denoted by ${\rm Eig}([{}^{\tt d}\!\Theta(c)])$, and we set
\begin{equation}\label{eq:vartheta}
  {}^{\tt d}\!\vartheta(c)\coloneqq\min{\rm Eig}([{}^{\tt d}\!\Theta(c)]).
  \end{equation}
\end{definition}
Recall that, according to ~\eqref{eq:proj_form_basis2} in Corollary~\ref{C:norm_proj}, we have for every disjoint  actuator positions $c\in [0,\pi]^M$,
\begin{equation}\label{eq:proj_eigenval}
  \norm{P_{U(c^M)}^{{}^{\tt d}\!E_M^\perp}}{}^2 = \frac{1}{{}^{\tt d}\!\vartheta( c^M )}.
\end{equation}

In order to prove the Main Theorems~\ref{MT:Dirmxe} and~\ref{MT:Diruni}, we will investigate~$\frac{1}{{}^{\tt d}\!\vartheta(c)}$.
For that we will start with the explicit expression for~$[({}^{\tt d}\!\Ce_M,\Cu_M(c))_{L^2}]$. For this purpose we first compute, for given $M$,
\ben\label{eq:ei_chij}
({}^{\tt d}\!e_i, \overline 1_{\omega_j})_{L^2}=
 (\fractx{2M}{r\pi^2})^\frac{1}{2}\frac{1}{ i}(\cos(i(c_j-\fractx{r\pi}{2M}))-\cos(i(c_j+\fractx{r\pi}{2M})))
 =(\fractx{8M}{r\pi^2})^\frac{1}{2}\frac{\sin(i\fractx{r\pi}{2M})\sin(ic_j)}{i}. 
\een
From now, to simplify the formulas we denote
\begin{equation}\label{deltaM}
 \delta_M\coloneqq\fractx{r\pi}{2M}. 
\end{equation}
 We have using \eqref{eq:ei_chij},
\begin{equation}\label{eq:cg_cf}
 [({}^{\tt d}\!\Ce_M,\Cu(c^M))_{L^2}]=(\fractx{8M}{r\pi^2})^\frac{1}{2}\begin{bmatrix}
                 \frac{\sin(1\delta_M)\sin(1c_1)}{1} &   \frac{\sin(1\delta_M)\sin(1c_2)}{1} &  \dots &
                 \frac{\sin(1\delta_M)\sin(1c_M)}{1}\\
                 \frac{\sin(2\delta_M)\sin(2c_1)}{2} &   \frac{\sin(2\delta_M)\sin(2c_2)}{2} &   \dots &
                 \frac{\sin(2\delta_M)\sin(2c_M)}{2}\\
                 \vdots& \vdots& \ddots &\vdots\\
		 \frac{\sin(M\delta_M)\sin(Mc_1)}{M} &   \frac{\sin(M\delta_M)\sin(Mc_2)}{M} &   \dots &
                 \frac{\sin(M\delta_M)\sin(Mc_M)}{M}
               \end{bmatrix}.
\end{equation}

\begin{lemma}\label{L:act_coincide}
  Given actuator positions $c=(c_1,\ldots, c_M)\in(0,L)^M$, if~$c_i=c_j$ for some~$i\ne j$,
  then the matrix~$[({}^{\tt d}\!\Ce_M,\Cu_M(c))_{L^2}]$ is singular.  
\end{lemma}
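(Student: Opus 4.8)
The plan is to read off the column structure of the matrix directly from the explicit formula~\eqref{eq:cg_cf}, and to observe that each column is completely determined by a single actuator centre. Writing the matrix entrywise, the entry in row~$i$ and column~$j$ of~$[({}^{\tt d}\!\Ce_M,\Cu_M(c))_{L^2}]$ is, by~\eqref{eq:ei_chij},
\[
 ({}^{\tt d}\!e_i,\overline 1_{\omega_j})_{L^2}
 =\left(\fractx{8M}{r\pi^2}\right)^{\frac12}\frac{\sin(i\delta_M)}{i}\,\sin(ic_j),
 \qquad i,j\in\{1,2,\dots,M\},
\]
with $\delta_M$ as in~\eqref{deltaM}. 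The point to extract from this is that the factor $\left(\fractx{8M}{r\pi^2}\right)^{\frac12}\tfrac{\sin(i\delta_M)}{i}$ depends on the row index~$i$ only, and is therefore common to all entries sitting in row~$i$, while the \emph{only} dependence on the column index~$j$ enters through the single scalar $\sin(ic_j)$. In other words, the $j$th column of the matrix is a function of $c_j$ alone.

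Given this observation, the conclusion is immediate. If $c_i=c_j$ for some pair $i\ne j$, then for every row index $k\in\{1,2,\dots,M\}$ one has $\sin(kc_i)=\sin(kc_j)$, so the $i$th and $j$th columns of $[({}^{\tt d}\!\Ce_M,\Cu_M(c))_{L^2}]$ are identical. A square matrix with two equal columns has vanishing determinant, and hence is singular. Since $[({}^{\tt d}\!\Ce_M,\Cu_M(c))_{L^2}]$ is the transpose of $[(\Cu_M(c),{}^{\tt d}\!\Ce_M)_{L^2}]$, the latter is singular as well, which is what the statement asserts.

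I do not anticipate any genuine obstacle here: the result is a purely structural consequence of the factorised form~\eqref{eq:cg_cf}, and no estimate or spectral argument is needed. The only minor bookkeeping point is to keep straight which index labels rows (eigenfunctions) and which labels columns (actuators), so that ``coinciding centres give coinciding columns'' is stated for the correct axis; this is settled by the convention in~\eqref{MatGF}. It is worth noting in passing that this lemma records a \emph{necessary} condition for invertibility of $[(\Cu_M(c),{}^{\tt d}\!\Ce_M)_{L^2}]$, and hence (via Lemma~\ref{L:equiv_nop}) for the direct-sum splitting~\eqref{eq:splitting}: the actuator centres must be pairwise distinct, which is already guaranteed by the disjointness condition~\eqref{act-disj}.
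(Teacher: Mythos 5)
Your proof is correct and is essentially identical to the paper's: both argue that if $c_i=c_j$ then the $i$th and $j$th columns of $[({}^{\tt d}\!\Ce_M,\Cu_M(c))_{L^2}]$ coincide, forcing singularity. The extra detail you supply via the explicit entry formula~\eqref{eq:ei_chij} only makes explicit what the paper leaves implicit.
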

\begin{proof}
  Suppose that $c_i=c_j$ for some pair~$(i,j)\in\{1,2,\dots,M\}^2$, $i\ne j$. Then the $i$-th and $j$-th column of $[({}^{\tt d}\!\Ce_M,\Cu_M(c))_{L^2}]$
  coincide and thus this matrix is singular.
\end{proof}
Notice that by Lemma~\ref{L:equiv_nop}, if $[({}^{\tt d}\!\Ce_M,\Cu_M(c))_{L^2}]$ is singular then $L^2\ne U(c)\oplus {}^{\tt d}\!E_M^\perp$,
and the projection~$P_{U(c)}^{{}^{\tt d}\!E_M^\perp}$  is not well defined.

In \cite[Lem. 4.2]{KunRod-pp17} it was proven that we have the direct sum $L^2= U(c)\oplus{}^{\tt d}\!E_M^\perp$,  as soon as all actuators are disjoint.
Here we will prove that, for actuators as in~\eqref{act-eqlen} (i.e., with the same length and no common center
location~$c_i$, but not necessarily disjoint), then we still have the direct sum.

\begin{lemma}\label{L:dirsum_indplace}
  Let~$c=(c_1,c_2,\dots,c_M)$ satisfy~\eqref{act-eqlen} and $\omega_j:=(c_j-\delta_M,c_j+\delta_M)$, with $c_i\ne c_j$ for all~$i\ne j$.
  Then, the space $U(c):= \linspan\{1_{\omega_1},1_{\omega_2},\ldots, 1_{\omega_M}\}$ satisfies $L^2= U(c)\oplus {}^{\tt d}\!E_M^\perp$. 
\end{lemma}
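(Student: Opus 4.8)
By Lemma~\ref{L:equiv_nop}, the asserted splitting $L^2 = U(c)\oplus {}^{\tt d}\!E_M^\perp$ is equivalent to the invertibility of the matrix $[({}^{\tt d}\!\Ce_M,\Cu_M(c))_{L^2}]$ written out in \eqref{eq:cg_cf}. The plan is to factor this matrix and show that each factor is nonsingular. Reading off \eqref{eq:cg_cf}, its $(i,j)$-entry is $(\fractx{8M}{r\pi^2})^{1/2}\fractx{\sin(i\delta_M)\sin(ic_j)}{i}$, so row $i$ carries the common scalar $\fractx{\sin(i\delta_M)}{i}$. Hence
\[
 [({}^{\tt d}\!\Ce_M,\Cu_M(c))_{L^2}]=(\fractx{8M}{r\pi^2})^{1/2}\, D\, S,
 \qquad D\coloneqq\operatorname{diag}\left(\fractx{\sin(i\delta_M)}{i}\right)_{i=1}^M, \quad S\coloneqq[\sin(ic_j)]_{i,j=1}^M,
\]
and it suffices to prove $\det D\ne 0$ and $\det S\ne 0$.

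The diagonal factor is easy: since $c$ satisfies \eqref{act-eqlen} with $r\in(0,1)$, for every $i\in\{1,\dots,M\}$ we have $0<i\delta_M=\fractx{ir\pi}{2M}\le\fractx{r\pi}{2}<\fractx{\pi}{2}$, so each $\sin(i\delta_M)>0$ and $D$ is invertible. The whole question therefore reduces to the nonsingularity of the generalised Vandermonde-type matrix $S=[\sin(ic_j)]$, under the sole hypothesis that the centres $c_1,\dots,c_M\in(0,\pi)$ are pairwise distinct (recall $L=\pi$). This is the heart of the matter.

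To handle $S$ I would show that $S^\top$ is injective, i.e.\ that $f(c_j)=0$ for all $j$ forces the coefficients to vanish, where $f(x)\coloneqq\sum_{i=1}^M a_i\sin(ix)$. The key is a zero-counting bound: a nontrivial such $f$ has at most $M-1$ zeros in $(0,\pi)$. To see this, let $m\le M$ be the largest index with $a_m\ne0$, substitute $w=\mathrm{e}^{\mathrm{i}x}$, and observe that $2\mathrm{i}\,w^m f(x)$ becomes a complex polynomial $P(w)$ of exact degree $2m$ with constant term $-a_m\ne0$. Each zero $c\in(0,\pi)$ of $f$ yields the two distinct roots $\mathrm{e}^{\pm\mathrm{i}c}$ of $P$, and $w=1$, $w=-1$ (i.e.\ $x=0,\pi$) are always roots since $\sin(i\cdot 0)=\sin(i\pi)=0$. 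Thus $k$ zeros in $(0,\pi)$ produce $2k+2$ distinct roots of $P$, forcing $2k+2\le 2m\le 2M$, i.e.\ $k\le M-1$. Consequently, if $f$ vanishes at the $M$ distinct points $c_1,\dots,c_M\in(0,\pi)$ then $f\equiv 0$ and $\underline a=0$; hence $S^\top$, and therefore $S$, is invertible.

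The main obstacle is precisely this last step --- establishing that $\{\sin(x),\dots,\sin(Mx)\}$ is a Chebyshev (Haar) system on $(0,\pi)$, so that $S$ is nonsingular; everything else (the reduction via Lemma~\ref{L:equiv_nop} and the factorisation) is bookkeeping, and the positivity of $\sin(i\delta_M)$ uses only $r<1$. Combining $\det D\ne0$ with $\det S\ne0$ gives $\det[({}^{\tt d}\!\Ce_M,\Cu_M(c))_{L^2}]\ne0$, whence the direct sum $L^2=U(c)\oplus{}^{\tt d}\!E_M^\perp$ by Lemma~\ref{L:equiv_nop}, completing the argument.
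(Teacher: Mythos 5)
Your proof is correct and follows essentially the same route as the paper: reduce via Lemma~\ref{L:equiv_nop} to the invertibility of $[({}^{\tt d}\!\Ce_M,\Cu_M(c))_{L^2}]$, strip off the nonzero row factors $\fractx{\sin(i\delta_M)}{i}$, and conclude from the fact that a nontrivial $\sum_{i=1}^M a_i\sin(ix)$ has at most $M-1$ zeros in $(0,\pi)$. The only difference is that the paper simply cites this zero-counting property from \cite[Prop.~4.1]{KunRod-pp17}, whereas you supply a (correct) self-contained proof of it via the substitution $w=\mathrm{e}^{\mathrm{i}x}$.
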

\begin{proof}
  By Lemma~\ref{L:equiv_nop} it is sufficient to prove that the matrix~\eqref{eq:cg_cf} is
  invertible, which in turn is equivalent to the invertibility of $\Xi_M := [\sin(ic_j)]_{i,j=1,\ldots, M}$. 
This follows at once by dividing the $i$-th row of \eqref{eq:cg_cf}
 by~$(\fractx{8M}{r\pi^2})^\frac{1}{2}\frac{\sin(i \delta_M)}{i}\ne0$. Let~$v\in\bbR^{M\times1}$ satisfy~$v^\top\Xi_M=0$,
which means that the function
$
 f(x)\coloneqq\textstyle\sum\limits_{i=1}^M v_i\sin(ix)
$
vanishes for $x\in\{c_1,c_2,\dots,c_M\}$. By~\cite[Prop. 4.1]{KunRod-pp17},  necessarily~$f=0$,
because such a nonzero~$f$ can have at most
$M-1$ zeros in $(0,\pi)$. Therefore, we conclude that necessarily~$v=0$, and thus $\Xi_M$ is invertible.
\end{proof}

\subsection{Proof of the Main Theorems~\ref{MT:Dirmxe} and~\ref{MT:Diruni}}\label{sS:proofDir}
Observe that the Main Theorems~\ref{MT:Dirmxe} and~\ref{MT:Diruni} are, respectively, corollaries of the following theorems.
\begin{theorem}\label{T:Dirmxe}
Let~$r\in(0,1)$, and consider
 the location  $c=c^M$ of our actuators given by~\eqref{Dmxe}.
 Then~$[{}^{\tt d}\!\Theta(c^M)]$ is a diagonal matrix and its smallest eigenvalue is given by 
\[
 {}^{\tt d}\!\vartheta( c^M)=\frac{4M^2}{r\pi^2(M-1)^2}\sin^2\left(\frac{(M-1)r\pi}{2M}\right),\quad\mbox{for}\quad M\ge2,
\]
and is simple.
For~$M=1$ the only eigenvalue is given by~$\frac{8}{r\pi^2}\sin^2(\frac{r\pi}{2})$.

 In particular,~$\{{}^{\tt d}\!\vartheta(c^M )\}_{M\ge 1}$ decreases and converges to 
$
\frac{4}{r\pi^2}\sin^2\left(\frac{r\pi}{2}\right)
$.

\end{theorem}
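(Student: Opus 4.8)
The plan is to read everything off the explicit matrix~\eqref{eq:cg_cf} and reduce the statement to elementary discrete trigonometric sums. Writing $A\coloneqq[({}^{\tt d}\!\Ce_M,\Cu_M(c^M))_{L^2}]$, definition~\eqref{ThetaM} gives $[{}^{\tt d}\!\Theta(c^M)]=AA^\top$, and from~\eqref{eq:cg_cf} its $(i,k)$-entry factorises as
\[
 [{}^{\tt d}\!\Theta(c^M)]_{ik}=\fractx{8M}{r\pi^2}\,\frac{\sin(i\delta_M)\sin(k\delta_M)}{ik}\,S_{ik},\qquad S_{ik}\coloneqq\sum_{j=1}^M\sin(ic_j)\sin(kc_j),
\]
with $\delta_M=\fractx{r\pi}{2M}$ as in~\eqref{deltaM}. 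Since $L=\pi$, the locations~\eqref{Dmxe} are the midpoints $c_j=\fractx{(2j-1)\pi}{2M}$, so the whole theorem is governed by the behaviour of the sums $S_{ik}$.

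First I would establish diagonality. Using $\sin(ic_j)\sin(kc_j)=\tfrac12[\cos((i-k)c_j)-\cos((i+k)c_j)]$ together with the closed form $\sum_{j=1}^M\cos((2j-1)\theta)=\fractx{\sin(2M\theta)}{2\sin\theta}$ (valid when $\sin\theta\ne0$), one obtains for every integer $m$ the evaluation $\sum_{j=1}^M\cos(mc_j)=\fractx{\sin(m\pi)}{2\sin(m\pi/(2M))}$. For $i\ne k$ in $\{1,\dots,M\}$ both $m=i-k$ and $m=i+k$ satisfy $0<|m|<2M$, so the denominator is nonzero while $\sin(m\pi)=0$; hence $S_{ik}=0$ and $[{}^{\tt d}\!\Theta(c^M)]$ is diagonal. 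For the diagonal I would use $\sin^2(ic_j)=\tfrac12(1-\cos(2ic_j))$: if $i<M$ then $0<2i<2M$ forces $\sum_j\cos(2ic_j)=0$ and $S_{ii}=\tfrac M2$, whereas for $i=M$ one has $\cos(2Mc_j)=\cos((2j-1)\pi)=-1$, so $S_{MM}=M$. This yields
\[
 [{}^{\tt d}\!\Theta(c^M)]_{ii}=\fractx{4M^2}{r\pi^2 i^2}\sin^2(i\delta_M)\ \ (i<M),\qquad [{}^{\tt d}\!\Theta(c^M)]_{MM}=\fractx{8}{r\pi^2}\sin^2\!\bigl(\fractx{r\pi}{2}\bigr),
\]
the last equality because $M\delta_M=\fractx{r\pi}{2}$; for $M=1$ this single entry is the claimed value.

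Since the matrix is diagonal its eigenvalues are the diagonal entries, and by~\eqref{eq:vartheta} I must locate the smallest one. Writing $\fractx{\sin^2(i\delta_M)}{i^2}=\delta_M^2\bigl(\fractx{\sin(i\delta_M)}{i\delta_M}\bigr)^2$ and using that $x\mapsto(\tfrac{\sin x}{x})^2$ is strictly decreasing on $(0,\pi)$ (note $i\delta_M\le(M-1)\delta_M<\fractx{r\pi}{2}<\fractx\pi2$), the entries for $i=1,\dots,M-1$ are strictly decreasing in $i$, so the minimum among them is the simple value $[{}^{\tt d}\!\Theta(c^M)]_{M-1,M-1}$. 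It remains to compare this with the anomalous top entry, i.e.\ to show $[{}^{\tt d}\!\Theta(c^M)]_{M-1,M-1}<[{}^{\tt d}\!\Theta(c^M)]_{MM}$. Cancelling $\fractx{4}{r\pi^2}$ and setting $\beta=\fractx{r\pi}{2}$, $t=\fractx{M-1}{M}$, this is exactly $\bigl(\fractx{\sin(\beta t)}{t}\bigr)^2<2\sin^2\beta$; the left side is decreasing in $t$, so its largest value over $t\in\{\tfrac12,\tfrac23,\dots\}$ is attained at $t=\tfrac12$ (i.e.\ $M=2$), where it reads $4\sin^2(\fractx\beta2)<2\sin^2\beta\Leftrightarrow\cos^2(\fractx\beta2)>\tfrac12\Leftrightarrow\beta<\fractx\pi2\Leftrightarrow r<1$. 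Hence ${}^{\tt d}\!\vartheta(c^M)=[{}^{\tt d}\!\Theta(c^M)]_{M-1,M-1}$ is the smallest eigenvalue and, being attained only at $i=M-1$, is simple.

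Finally, for the monotone convergence I would write ${}^{\tt d}\!\vartheta(c^M)=\fractx{4}{r\pi^2}\bigl(\fractx{\sin(\beta u)}{u}\bigr)^2$ with $u=\fractx{M-1}{M}\uparrow1$; since $u\mapsto\fractx{\sin(\beta u)}{u}$ is positive and decreasing for $\beta<\fractx\pi2$, the sequence $\{{}^{\tt d}\!\vartheta(c^M)\}_{M\ge2}$ decreases, and letting $u\to1$ gives the limit $\fractx{4}{r\pi^2}\sin^2(\fractx{r\pi}{2})$; the remaining step ${}^{\tt d}\!\vartheta(c^1)>{}^{\tt d}\!\vartheta(c^2)$ is the $t=\tfrac12$ inequality already used, because ${}^{\tt d}\!\vartheta(c^1)=\fractx{8}{r\pi^2}\sin^2\beta=[{}^{\tt d}\!\Theta(c^2)]_{22}$. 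I expect the main obstacle to be precisely the factor-of-two anomaly $S_{MM}=M$ versus $S_{ii}=\fractx M2$: it is tempting to conclude that the smallest diagonal entry sits at $i=M$, and showing instead that it sits at $i=M-1$ — which is exactly where the hypothesis $r<1$ is genuinely needed — is the crux of the argument.
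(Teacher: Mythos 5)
Your proposal is correct, and its skeleton coincides with the paper's: the same product-to-sum identity and the same closed form for $\sum_{j=1}^M\cos(mc_j)$ (the paper's Lemma~\ref{L:sumcos0}) give diagonality, the same split $S_{ii}=\tfrac M2$ for $i<M$ versus $S_{MM}=M$ gives the spectrum \eqref{eig.ext1}, and the same monotonicity of $x\mapsto(\sin x/x)^2$ (Lemma~\ref{L:decsintt}) locates the minimum among the first $M-1$ entries at $i=M-1$. Where you genuinely depart from the paper is the comparison of $[{}^{\tt d}\!\Theta(c^M)]_{M-1,M-1}$ with the anomalous entry $[{}^{\tt d}\!\Theta(c^M)]_{MM}=\fractx{8}{r\pi^2}\sin^2(\fractx{r\pi}{2})$: the paper treats $M=2$, $M=3$ and $M\ge4$ as separate cases, with the $M=3$ case requiring a rather laborious trigonometric computation ending in the comparison of $\fractx{3+\sqrt{41}}{8\sqrt 2}$ with $\fractx{\sqrt3}{2}$, whereas you parametrise the entry as $\fractx{4}{r\pi^2}\bigl(\fractx{\sin(\beta t)}{t}\bigr)^2$ with $t=\fractx{M-1}{M}\in\{\tfrac12,\tfrac23,\dots\}$ and use the same monotonicity in $t$ to reduce every $M\ge2$ to the single worst case $t=\tfrac12$, where the inequality becomes $\cos^2(\fractx{r\pi}{4})>\tfrac12$, i.e.\ exactly $r<1$. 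This is a clean improvement: it eliminates the case analysis, makes visible where the hypothesis $r<1$ is sharp, and simultaneously delivers the monotone decrease and the limit $\fractx{4}{r\pi^2}\sin^2(\fractx{r\pi}{2})$ of the sequence $\{{}^{\tt d}\!\vartheta(c^M)\}$ (including the step from $M=1$ to $M=2$), a part of the statement the paper's proof does not spell out.
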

 
\begin{theorem}\label{T:Diruni}
Let~$r\in(0,1)$, and consider
 the location  $c=c^M$  of our actuators given by~\eqref{Duni}.
 Then~$[{}^{\tt d}\!\Theta(c^M)]$ is a diagonal matrix and its smallest eigenvalue is given by
\[
  {}^{\tt d}\!\vartheta( c^M )=\frac{4(M+1)}{r\pi^2M}\sin^2\left(\frac{r\pi}{2}\right),\quad\mbox{for}\quad M\ge \fractx{r}{1-r},
\]
and is simple.

In particular~$\{{}^{\tt d}\!\vartheta( c^M )\}_{M\ge \fractx{r}{1-r}}$ decreases and converges to 
$
\frac{4}{r\pi^2}\sin^2\left(\frac{r\pi}{2}\right).
$
\end{theorem}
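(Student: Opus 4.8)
The plan is to write $[{}^{\tt d}\!\Theta(c^M)]$ out entrywise from~\eqref{ThetaM} and~\eqref{eq:cg_cf}, show that the equidistant nodes force it to be diagonal, and then read off and minimise the diagonal. Setting $B:=[(\Cu_M(c^M),{}^{\tt d}\!\Ce_M)_{L^2}]$, so that $[{}^{\tt d}\!\Theta(c^M)]=B^\top B$ by~\eqref{ThetaM}, the formula~\eqref{eq:ei_chij} for the inner products $({}^{\tt d}\!e_i,\overline 1_{\omega_j})_{L^2}$ gives, with $\delta_M=\fractx{r\pi}{2M}$ as in~\eqref{deltaM},
\[
  [{}^{\tt d}\!\Theta(c^M)]_{pq}=\fractx{8M}{r\pi^2}\,\frac{\sin(p\delta_M)\sin(q\delta_M)}{pq}\sum_{j=1}^M\sin(pc_j^M)\sin(qc_j^M),\qquad 1\le p,q\le M.
\]
Thus everything reduces to the trigonometric sum evaluated at the nodes $c_j^M=\fractx{j\pi}{M+1}$ of~\eqref{Duni}.

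The crucial step, special to the uniform placement, is the discrete sine orthogonality
\[
  \sum_{j=1}^M\sin\!\left(\fractx{pj\pi}{M+1}\right)\sin\!\left(\fractx{qj\pi}{M+1}\right)=\fractx{M+1}{2}\,\delta_{pq},\qquad 1\le p,q\le M.
\]
I would prove it by writing $2\mathrm{i}\sin\theta=\mathrm{e}^{\mathrm{i}\theta}-\mathrm{e}^{-\mathrm{i}\theta}$ with $\zeta:=\mathrm{e}^{\mathrm{i}\pi/(M+1)}$, so that the product of sines becomes a combination of the powers $\zeta^{\pm(p+q)j}$, $\zeta^{\pm(p-q)j}$ (plus a constant on the diagonal); summing each geometric series in $j$ and using $\zeta^{M+1}=-1$ reduces the whole computation to the elementary identity $\sum_{j=1}^M\zeta^{nj}+\sum_{j=1}^M\zeta^{-nj}=-(1+(-1)^n)$, valid whenever $\zeta^n\ne1$. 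Because $p+q$ and $p-q$ have the same parity, the $(p+q)$- and $(p-q)$-contributions cancel for $p\ne q$, while the diagonal term yields $\fractx{M+1}{2}$; note that for $1\le p,q\le M$ the exponents $p\pm q$ never vanish modulo $2(M+1)$, so the geometric sums are legitimate. Consequently $[{}^{\tt d}\!\Theta(c^M)]$ is diagonal with entries
\[
  \theta_p=\fractx{4M(M+1)}{r\pi^2}\,\frac{\sin^2(p\delta_M)}{p^2}=\fractx{4M(M+1)}{r\pi^2}\,\delta_M^2\left(\frac{\sin(p\delta_M)}{p\delta_M}\right)^2,\qquad p\in\{1,\dots,M\}.
\]

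It then remains to locate the minimum of $\{\theta_p\}_{p=1}^M$ and to pass to the limit in $M$. Since $r\in(0,1)$ we have $p\delta_M=\fractx{pr\pi}{2M}\in(0,\fractx{r\pi}{2}]\subset(0,\fractx{\pi}{2})$ for every $p\le M$, and since $t\mapsto\fractx{\sin t}{t}$ is strictly decreasing on $(0,\pi)$, the map $p\mapsto\theta_p$ is strictly decreasing. Hence the minimal eigenvalue is attained only at $p=M$, which gives simplicity and
\[
  {}^{\tt d}\!\vartheta(c^M)=\theta_M=\fractx{4M(M+1)}{r\pi^2}\,\frac{\sin^2(r\pi/2)}{M^2}=\fractx{4(M+1)}{r\pi^2M}\sin^2\!\left(\fractx{r\pi}{2}\right).
\]
Rewriting this as $\fractx{4}{r\pi^2}\sin^2(\fractx{r\pi}{2})\bigl(1+\fractx1M\bigr)$ makes both the monotone decrease in $M$ and the convergence to $\fractx{4}{r\pi^2}\sin^2(\fractx{r\pi}{2})$ immediate. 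In the regime $M\ge\fractx{r}{1-r}$ the intervals are disjoint (the discussion after Main Theorem~\ref{MT:Diruni}) and have distinct centers, so Lemma~\ref{L:dirsum_indplace} guarantees that the projection is well defined, $\Cu_M(c^M)$ is orthonormal, and~\eqref{eq:proj_form_basis2} of Corollary~\ref{C:norm_proj} relates ${}^{\tt d}\!\vartheta(c^M)$ to $\opnorm{P_{U(c^M)}^{{}^{\tt d}\!E_M^\perp}}{}^2$ — the bridge to the corresponding Main Theorem. I expect the discrete orthogonality identity to be the only genuinely delicate point; the squared--sinc monotonicity and the passage to the limit are routine.
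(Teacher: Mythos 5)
Your proposal is correct and follows essentially the same route as the paper: establish that the equidistant nodes make $[{}^{\tt d}\!\Theta(c^M)]$ diagonal via the discrete orthogonality of $\sin(pc_j^M)$, read off the diagonal entries $\fractx{4M(M+1)}{r\pi^2}\sin^2(p\delta_M)/p^2$, and minimise using the monotonicity of $t\mapsto\sin(t)/t$. The only (immaterial) difference is that you prove the sine orthogonality directly by summing geometric series in $\zeta=\mathrm{e}^{\mathrm{i}\pi/(M+1)}$, whereas the paper reaches the same parity cancellation through the product-to-sum identity and the real cosine-sum formula~\eqref{sumcos0}.
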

The corresponding proofs are given below. We start by presenting some auxiliary results.
\begin{lemma}\label{L:decsintt}
 Let $r\in(0,1)$ and~$M\ge 1$. Then the function~$t\mapsto\fractx{\sin^2(\delta_M t)}{t^2}$ is strictly decreasing
in the interval~$(0,M]$.
\end{lemma}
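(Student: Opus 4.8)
The plan is to reduce the claim to the elementary monotonicity of the cardinal sine function, exploiting that the relevant argument never leaves a region where $\sin$ is positive. First I would rewrite the function in terms of $\delta_M=\fractx{r\pi}{2M}$: since
\[
 \frac{\sin^2(\delta_M t)}{t^2}=\delta_M^2\left(\frac{\sin(\delta_M t)}{\delta_M t}\right)^2,
\]
and since the substitution $s=\delta_M t$ maps $(0,M]$ onto $(0,\delta_M M]=(0,\fractx{r\pi}{2}]$, which is contained in $(0,\fractx{\pi}{2})$ because $r\in(0,1)$, it suffices to show that $s\mapsto\left(\frac{\sin s}{s}\right)^2$ is strictly decreasing on $(0,\fractx{\pi}{2}]$. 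As the change of variable $t\mapsto s=\delta_M t$ is strictly increasing, the monotonicity in $t$ follows from the monotonicity in $s$.

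Equivalently, and more directly, I would set $g(t):=\frac{\sin^2(\delta_M t)}{t^2}$ and examine the sign of its derivative. A short computation gives
\[
 g'(t)=\frac{2\sin(\delta_M t)\bigl(\delta_M t\cos(\delta_M t)-\sin(\delta_M t)\bigr)}{t^3}.
\]
For $t\in(0,M]$ we have $\delta_M t\in(0,\fractx{r\pi}{2}]\subset(0,\fractx{\pi}{2})$, so that $\sin(\delta_M t)>0$ and $t^3>0$. Hence the sign of $g'(t)$ equals the sign of $\psi(u):=u\cos u-\sin u$ evaluated at $u=\delta_M t$, and the whole lemma reduces to the single inequality $\psi(u)<0$ on $(0,\fractx{\pi}{2})$.

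The key (and only nonroutine) step is thus to establish $\psi(u)<0$ for $u\in(0,\fractx{\pi}{2})$. This follows from $\psi(0)=0$ together with $\psi'(u)=\cos u-u\sin u-\cos u=-u\sin u<0$ on $(0,\fractx{\pi}{2})$: the function $\psi$ starts at $0$ and is strictly decreasing, hence strictly negative throughout the interval. Consequently $g'(t)<0$ for every $t\in(0,M]$, which proves that $g$ is strictly decreasing there. The main obstacle is purely one of bookkeeping, namely ensuring that the argument $\delta_M t$ stays inside $(0,\fractx{\pi}{2})$ — this is exactly where the hypothesis $r<1$ is used, as it guarantees $\delta_M M=\fractx{r\pi}{2}<\fractx{\pi}{2}$ and keeps $\sin(\delta_M t)$ positive, so that the sign analysis of $\psi$ remains valid on the entire interval $(0,M]$.
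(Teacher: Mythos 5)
Your proof is correct and follows essentially the same route as the paper: differentiate, use $r<1$ to keep $\delta_M t$ inside $(0,\fractx{\pi}{2})$, and reduce everything to the elementary inequality $u\cos u<\sin u$ there. The only (cosmetic) difference is that you verify this inequality by noting $\psi(u)=u\cos u-\sin u$ vanishes at $0$ and has derivative $-u\sin u<0$, whereas the paper factors out $\cos(\delta_M t)$ and invokes $\tan s>s$ proved by integrating $1/\cos^2$.
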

\begin{proof}
 Recalling~\eqref{deltaM},  observe that with~$r\in(0,1)$ and~$t\in(0,M]$
we have~$t\delta_M=t\frac{r\pi}{2M}\in(0,\frac{\pi}{2})$. With~$\varphi(t)\coloneqq\fractx{\sin(\delta_M t)}{t}>0$, defined for~$t\in (0,M]$, 
 \[\fractx{\ed}{\ed t} \varphi^2(t)= 2\varphi(t)\fractx{t\delta_M\cos(\delta_M t)-\sin(\delta_M t)}{t^2} 
 =  2\varphi(t) \cos(\delta_M t) \fractx{ t\delta_M-\tan(\delta_M t)  }{t^2} <0,
  \]
  where in the last inequality we used $\tan(s)>s$, for~$s\in (0,\fractx{\pi}{2})$, which is true since
  \[
  \tan(s)=\tan(s)-\tan(0) = \int_0^s\fractx{\ed}{\ed\tau}\tan(\tau)\;\ed\tau 
  = \int_0^s \fractx{1}{\cos^2(\tau)}\;\ed \tau >s,
  \]
  for $ s\in (0,\fractx{\pi}{2})$.
  \end{proof}

\begin{lemma}\label{L:sumcos0}
  For the actuator locations   $c^M=(c_1,\ldots, c_M)$ given by \eqref{Dmxe} we have 
\begin{equation}\label{sum.ext}
\sum\limits_{k=1}^M\cos(mc_k)
=0, \quad\mbox{for all}\quad m\in\{1,2,\dots,2M-1\}.
\end{equation} 
\end{lemma}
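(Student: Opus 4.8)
The statement to prove is that for the ``maximally-equispaced'' locations $c_k^M = \frac{(2k-1)\pi}{2M}$ (recall $L=\pi$), one has $\sum_{k=1}^M \cos(m c_k) = 0$ for all $m\in\{1,\dots,2M-1\}$. The natural approach is to pass to complex exponentials and recognise the sum as a geometric series. Writing $\cos(m c_k) = \Re\, e^{\mathrm{i} m c_k}$, the locations give $m c_k = \frac{m\pi(2k-1)}{2M} = \frac{m\pi}{2M}(2k-1)$, so
\[
\sum_{k=1}^M e^{\mathrm{i} m c_k} = e^{\mathrm{i}\frac{m\pi}{2M}}\sum_{k=1}^M e^{\mathrm{i}\frac{m\pi}{M}(k-1)} = e^{\mathrm{i}\frac{m\pi}{2M}}\sum_{k=0}^{M-1}\left(e^{\mathrm{i}\frac{m\pi}{M}}\right)^{k}.
\]
This is a geometric sum with ratio $q=e^{\mathrm{i} m\pi/M}$, and the whole computation reduces to analysing this ratio.

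The key step is then to sum the geometric series and examine when the numerator vanishes. For $m$ in the stated range, $q^M = e^{\mathrm{i} m\pi} = (-1)^m$. I would split into two cases. First, if $q\neq 1$, i.e. $e^{\mathrm{i} m\pi/M}\neq 1$, which holds precisely when $m$ is not a multiple of $2M$ — and every $m\in\{1,\dots,2M-1\}$ satisfies this — then the closed form gives
\[
\sum_{k=0}^{M-1} q^{k} = \frac{q^{M}-1}{q-1} = \frac{(-1)^m - 1}{q-1}.
\]
When $m$ is even this numerator is $0$, so the entire sum vanishes and we are done for even $m$. When $m$ is odd the numerator is $-2$, so the sum is nonzero; here the prefactor $e^{\mathrm{i} m\pi/(2M)}$ must be used. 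Multiplying through and taking the real part, the odd case reduces to showing $\Re\!\left(e^{\mathrm{i} m\pi/(2M)}\cdot\frac{-2}{e^{\mathrm{i} m\pi/M}-1}\right)=0$; factoring $e^{\mathrm{i} m\pi/M}-1 = e^{\mathrm{i} m\pi/(2M)}\bigl(e^{\mathrm{i} m\pi/(2M)}-e^{-\mathrm{i} m\pi/(2M)}\bigr) = e^{\mathrm{i} m\pi/(2M)}\cdot 2\mathrm{i}\sin(m\pi/(2M))$ cancels the prefactor cleanly and leaves a purely imaginary quantity $\frac{-1}{\mathrm{i}\sin(m\pi/(2M))}$, whose real part is $0$. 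This settles the odd case.

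The only technical care needed is to ensure the geometric-series formula is applicable, i.e. that $q\neq 1$ for all $m$ in the range; since $m/M$ is an even integer exactly when $m\in\{0,2M,4M,\dots\}$, none of which lie in $\{1,\dots,2M-1\}$, the formula is always valid. I expect the main (minor) obstacle to be purely bookkeeping: keeping the $e^{\mathrm{i} m\pi/(2M)}$ prefactor and the denominator aligned so that the cancellation producing a purely imaginary number is transparent, rather than getting lost in half-angle trigonometric manipulations. An equivalent and perhaps cleaner route, which I would mention as an alternative, is the symmetry argument: the points $\{c_k^M\}$ are symmetric about $x=\pi/2$ (indeed $c_{M+1-k}=\pi - c_k$), so $\cos(m c_{M+1-k}) = \cos(m\pi - m c_k) = (-1)^m\cos(m c_k)$; for odd $m$ the terms cancel in pairs (with the middle term, present when $M$ is odd, satisfying $\cos(m\pi/2)=0$), giving the result immediately, while the even-$m$ case is handled by the geometric sum above.
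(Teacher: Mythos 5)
Your proposal is correct and takes essentially the same approach as the paper: both reduce the claim to summing cosines over an arithmetic progression of angles, the paper by invoking the closed-form identity $\sum_{n=0}^{M-1}\cos(a+nb)=\frac{\sin(Mb/2)}{\sin(b/2)}\cos(a+(M-1)\frac{b}{2})$ from a reference (which immediately gives $\frac{\sin(m\pi)}{2\sin(m\pi/(2M))}=0$ for all $m$ in the range at once), and you by re-deriving that identity via the geometric series of $e^{\mathrm{i}m\pi/M}$, with an even/odd case split that is only a presentational difference. Your check that the ratio never equals $1$ for $m\in\{1,\dots,2M-1\}$ corresponds exactly to the paper's hypothesis $\sin(b/2)\ne 0$, and the symmetry argument you mention as an alternative is a nice bonus but not needed.
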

\begin{proof}
  Let us fix $M\ge 1$.  The result follows from the fact that, for constants~$a\in\R$ and~$b\in\R$, we have the identity
\begin{equation}\label{sumcos0}
\sum\limits_{n=0}^{M-1}\cos(a +nb)= \fractx{\sin(\fractx{Mb}{2})}{\sin(\frac{b}{2})}\cos(a+(M-1)\fractx{b}{2}),\quad\mbox{if}\quad \sin(\fractx{b}{2})\ne0,
\end{equation}
whose proof can be found in~\cite{Knapp09}. Indeed, it is enough to observe that
\begin{equation}\label{eq:sin}
 \sum\limits_{k=1}^M\cos(mc_k) =\sum\limits_{n=0}^{M-1}\cos(mc_{n+1}) =\sum\limits_{n=0}^{M-1}\cos(m\fractx{(2(n+1)-1)\pi}{2M})
 =\sum\limits_{n=0}^{M-1}\cos(\fractx{m\pi}{2M}+n\fractx{m\pi}{M}).
\end{equation}
Hence using~\eqref{sumcos0} with $a:= \fractx{m\pi}{2M}$ and $b:=\fractx{m\pi}{M}$ we obtain 
\[
 \sum\limits_{n=0}^{M-1}\cos(\fractx{m\pi}{2M}+n\fractx{m\pi}{M}) =\fractx{\sin(\fractx{Mm\pi}{2M})}{\sin(\fractx{m\pi}{2M})}\cos(\fractx{m\pi}{2M}+(M-1)\fractx{m\pi}{2M})
=\fractx{\sin(\fractx{m\pi}{2})}{\sin(\fractx{m\pi}{2M})}\cos(\fractx{m\pi}{2})=\fractx{\sin(m\pi)}{2\sin(\fractx{m\pi}{2M})}=0,
\]
which together with \eqref{eq:sin} completes the proof. 
\end{proof}

\begin{proof}[Proof of Theorem~\ref{T:Dirmxe}]
  Let~$i$ and~$j$ be given in~$\{1,2,\dots,M\}$, with~$i>j$. For the $(i,j)$-entry of~$[{}^{\tt d}\!\Theta(c)]$, we find
\begin{align}
  {}^{\tt d}\!\Theta(c)_{ij}
&= \fractx{8M}{r\pi^2}\fractx{\sin(i\delta_M)\sin(j\delta_M)}{ij}\sum\limits_{k=1}^M\sin(ic_k)\sin(jc_k)\notag\\
&=\fractx{4M}{r\pi^2}\fractx{\sin(i\delta_M)\sin(j \delta_M)}{ij}\sum\limits_{k=1}^M\left(\cos((i-j)c_k)-\cos((i+j)c_k)\right)\label{orthext1},
\end{align}
where we used $2\sin(ic_k)\sin(jc_k)=\cos((i-j)c_k)-\cos((i+j)c_k)$.
Since $1<i\pm j<2M$, from~\eqref{sum.ext} it follows that
$[{}^{\tt d}\!\Theta(c)]_{ij}=0$, for all~$i>j$. Since~$[{}^{\tt d}\!\Theta(c)]$ is symmetric,
it follows that~$[{}^{\tt d}\!\Theta(c)]_{ij}=0$, for all~$i\ne j$. That is, the
matrix~$[{}^{\tt d}\!\Theta(c)]$ is diagonal. 

The eigenvalues of~$[{}^{\tt d}\!\Theta(c)]$ are given by the elements in its diagonal, which are
\begin{equation}\label{eig.ext}
  {\rm Eig}([{}^{\tt d}\!\Theta(c)])=\left\{\fractx{8M}{r\pi^2}\fractx{\sin^2(i\delta_M)}{i^2}{\textstyle\sum\limits_{k=1}^M}\sin^2(ic_k)\;\Bigl|\;
i\in\{1,2,\dots,M\}\right\}. 
\end{equation}
Now we observe that~$\sum\limits_{k=1}^M\sin^2(ic_k)=\frac{1}{2}\sum\limits_{k=1}^M(1-\cos(2ic_k))$, then using again~\eqref{sum.ext} (in case~$i<M$)
we obtain
\begin{align*}
\sum\limits_{k=1}^M\sin^2(ic_k)&
=\left\{\begin{array}{ll}\frac{M}{2},\qquad &i< M,\\
\frac{M}{2}-\frac{1}{2}\sum\limits_{k=1}^M\cos((2k-1)\pi)=M,\qquad &i=M.
\end{array}\right.
\end{align*}
Hence, from~\eqref{eig.ext} we see that the eigenvalues are
\begin{equation}\label{eig.ext1}
  {\rm Eig}([{}^{\tt d}\!\Theta(c)])=\left\{\fractx{4M^2}{r\pi^2}\fractx{\sin^2(\fractx{i r\pi}{2M})}{i^2}\;\Bigl|\; i\in\{1,2,\dots,M-1\}\right\}{\textstyle\bigcup}
 \left\{
 \fractx{8}{r\pi^2}\sin^2(\fractx{r\pi}{2})\right\}.
\end{equation}

Now, from Lemma~\ref{L:decsintt}, we have that the sequence~$i\mapsto\fractx{\sin^2(i \delta_M)}{i^2}$ is strictly decreasing, $1\le i\le M-1$,
which implies that we have only two possibilities for the smallest eigenvalue~${}^{\tt d}\!\vartheta(c)$, more precisely, 
\[
  {}^{\tt d}\!\vartheta(c)=\min\left\{\fractx{8}{r\pi^2}\fractx{M^2}{2(M-1)^2}\sin^2((M-1) \delta_M), \; \fractx{8}{r\pi^2}\sin^2(\fractx{r\pi}{2})\right\}.
\]

We now distinguish the four cases $M=1,2,3$, and $M\ge 4$.\\ 
\noindent
\underline{$M=1$}: In this case the only eigenvalue is~${}^{\tt d}\!\vartheta(c)=\fractx{8}{r\pi^2}\sin^2(\fractx{r\pi}{2})$ which is trivially simple.\\
\noindent
\underline{$M=2$}: In this situation, we find that
\begin{equation*}
\fractx{8}{r\pi^2}\sin^2(\fractx{r\pi}{2})>\fractx{4M^2}{r\pi^2(M-1)^2}\sin^2((M-1)\delta_M)
\quad \Longleftrightarrow \quad  8\sin^2(\fractx{r\pi}{2})> 16\sin^2(\fractx{r\pi}{4}) \quad \Longleftrightarrow \quad  \sqrt2\cos(\fractx{r\pi}{4})>1.
\end{equation*}
The last inequality holds true because~$0<r<1$, thus we conclude that~${}^{\tt d}\!\vartheta(c)=\fractx{4M^2}{r\pi^2(M-1)^2}\sin^2((M-1)\delta_M)$
and that~${}^{\tt d}\!\vartheta(c)$ is simple. \\
\noindent
\underline{$M=3$}: In this case we find that
\begin{align*}
\fractx{8}{r\pi^2}\sin^2(\fractx{r\pi}{2})>\fractx{4M^2}{r\pi^2(M-1)^2}\sin^2((M-1) \delta_M) \quad 
\Longleftrightarrow \quad 8\sin^2(\fractx{r\pi}{2})>9\sin^2(\fractx{r\pi}{3}).
\end{align*}
To see that the  last  inequality from the previous line holds true for all for~$r\in(0,1)$ we compute 
\begin{align*}
8\sin^2(\fractx{r\pi}{2})=9\sin^2(\fractx{r\pi}{3})&\quad\Longleftrightarrow\quad 2\sqrt2\sin(\fractx{r\pi}{3}+\fractx{r\pi}{6})=3\sin(\fractx{r\pi}{3})\\
&\quad\Longleftrightarrow\quad 2\sqrt2\left(\sin(\fractx{r\pi}{3})\cos(\fractx{r\pi}{6})+\sin(\fractx{r\pi}{6})\cos(\fractx{r\pi}{3})\right)
=6\sin(\fractx{r\pi}{6})\cos(\fractx{r\pi}{6})\\
&\quad\Longleftrightarrow\quad \sin(\fractx{r\pi}{6})\left(4\sqrt2\cos^2(\fractx{r\pi}{6})+2\sqrt2\left(2\cos^2(\fractx{r\pi}{6})-1\right)
-6\cos(\fractx{r\pi}{6})\right)=0\\
&\quad\Longleftrightarrow\quad 8\sqrt2\cos^2(\fractx{r\pi}{6})-6\cos(\fractx{r\pi}{6})-2\sqrt2=0
\quad\Longleftrightarrow\quad\cos(\fractx{r\pi}{6})=\fractx{6\pm\sqrt{164}}{16\sqrt2},
\end{align*}
from which we conclude, since~$0<\fractx{r\pi}{6}<\fractx{\pi}{6}$, that
\begin{equation*}
 8\sin^2(\fractx{r\pi}{2})=9\sin^2(\fractx{r\pi}{3})
\quad \Longrightarrow \quad\cos(\fractx{r\pi}{6})=\fractx{3+\sqrt{41}}{8\sqrt2}.
\end{equation*}
Next we observe that~$\fractx{3+\sqrt{41}}{8\sqrt2}<\fractx{\sqrt{3}}{2}$, which by elementary manipulations is equivalent to $369<529$.
Therefore we can conclude that
\begin{align*}
8\sin^2(\fractx{r\pi}{2})=9\sin^2(\fractx{r\pi}{3}) \quad \Longrightarrow \quad \cos(\fractx{r\pi}{6})<\fractx{\sqrt{3}}{2}=\cos(\fractx{\pi}{6}),
\end{align*}
which is not possible with~$r\in(0,1)$. Hence,
we have~$8\sin^2(\fractx{r\pi}{2})\ne 9\sin^2(\fractx{r\pi}{3})$, which shows that the first eigenvalue is simple. Furthermore, 
for the function~$g(r)\coloneqq 8\sin^2(\fractx{r\pi}{2})- 9\sin^2(\fractx{r\pi}{3})$, $r\in(0,1)$, we find that~$g(\frac{1}{2})=4- \fractx{9}{4}>0$.
Necessarily~$g(r)>0$ in the entire interval~$(0,1)$, because~$g$ has no zeros in~$(0,1)$. That is, we
have~${}^{\tt d}\!\vartheta(c)=\fractx{4M^2}{r\pi^2(M-1)^2}\sin^2((M-1)\delta_M)$.  Finally~${}^{\tt d}\!\vartheta(c)$ is simple due to
Lemma~\ref{L:decsintt}.\\
\noindent
\underline{$M\ge 4$}: In this case we have 
\begin{equation*}
M\ge4 \quad \Longrightarrow\quad  \left(2>\fractx{M^2}{(M-1)^2} \quad \mbox{and} \quad 1>\fractx{M-1}{M}\right) \quad \Longrightarrow \quad
\fractx{8}{r\pi^2}\sin^2(\fractx{r\pi}{2})>\fractx{4M^2}{r\pi^2(M-1)^2}\sin^2((M-1) \fractx{r\pi}{2M} ).
\end{equation*}
It follows that the smallest eigenvalue satisfies~${}^{\tt d}\!\vartheta(c)=\fractx{4M^2}{r\pi^2(M-1)^2}\sin^2((M-1)\delta_M)$, if~$M\ge4$.
 Using Lemma~\ref{L:decsintt}, it follows that~${}^{\tt d}\!\vartheta(c)$ is simple.
\end{proof}

\begin{proof}[Proof of Theorem~\ref{T:Diruni}]
Let~$r\in(0,\frac{M}{M+1}]\subset(0,1)$, and let~$i$ and~$j$ be
given in~$\{1,2,\dots,M\}$, with~$i>j$. We proceed as in the proof of Theorem~\ref{T:Dirmxe}.
For the entry~$[{}^{\tt d}\!\Theta(c)]_{ij}$ of~$[{}^{\tt d}\!\Theta(c)]$ we find
\begin{align}
  [{}^{\tt d}\!\Theta(c)]_{ij}
&=\fractx{4M}{r\pi^2}\fractx{\sin(\fractx{i r\pi}{2M})\sin(\fractx{j r\pi}{2M})}{ij}\sum\limits_{k=1}^M\left(\cos((i-j)c_k)-\cos((i+j)c_k)\right)
\label{orthuni1}
\end{align}
and for any given~$m\ge 1$ such that~$1\le m\le 2M$,
we find that
\begin{align*}
\sum\limits_{k=1}^M\cos(mc_k)&=\sum\limits_{k=1}^M\cos(\fractx{mk\pi}{M+1})
=\sum\limits_{n=0}^{M-1}\cos(\fractx{m\pi}{M+1}+n\fractx{m\pi}{M+1}).
\end{align*}
Since~$0<\fractx{m\pi}{2(M+1)}<\frac{\pi}{2}$, for all~$m\le 2M$, we can use formula~\eqref{sumcos0}
to derive that
\begin{equation*}
\sum\limits_{k=1}^M\cos(mc_k)
=\fractx{\sin(\fractx{Mm\pi}{2(M+1)})}{\sin(\fractx{m\pi}{2(M+1)})}\cos(\fractx{m\pi}{M+1}+(M-1)\fractx{m\pi}{2(M+1)})
=\fractx{\sin(\fractx{Mm\pi}{2(M+1)})}{\sin(\fractx{m\pi}{2(M+1)})}\cos(\fractx{m\pi}{2}),\qquad 1\le m\le2M,
\end{equation*}
from which we obtain
\begin{subequations}\label{sum.uni}
 \begin{align}
\sum\limits_{k=1}^M\cos(mc_k)&=0,\quad &&\mbox{if~$m$ is odd}.\\
\sum\limits_{k=1}^M\cos(mc_k)&=\fractx{\sin(\fractx{m}{2}\fractx{(M+1-1)\pi}{M+1})}{\sin(\fractx{m\pi}{2(M+1)})}(-1)^\frac{m}{2}
=-\fractx{\sin(\fractx{m}{2}\fractx{\pi}{M+1})}{\sin(\fractx{m\pi}{2(M+1)})}=-1,\quad &&\mbox{if~$m$ is even}.
\end{align}
\end{subequations}

Therefore, from~\eqref{orthuni1} it follows that
$[{}^{\tt d}\!\Theta(c)]_{ij}=0$, for all~$i>j$, because $i-j$ is even if, and only if,~$i+j$ is even. The symmetry of~$[{}^{\tt d}\!\Theta(c)]$
implies that the
matrix~$[{}^{\tt d}\!\Theta(c)]$ is diagonal, and its eigenvalues are
\begin{equation}\label{eig.uni}
  {\rm Eig}([{}^{\tt d}\!\Theta(c)])=\left\{\fractx{8M}{r\pi^2}\fractx{\sin^2(\fractx{i r\pi}{2M})}{i^2}{\textstyle\sum\limits_{k=1}^M}\sin^2(ic_k)\;\Bigl|\;
i\in\{1,2,\dots,M\}\right\}. 
\end{equation}
Writing~$\sum\limits_{k=1}^M\sin^2(ic_k)=\frac{1}{2}\sum\limits_{k=1}^M(1-\cos(2ic_k))$, by~\eqref{sum.uni}
we obtain
\begin{align*}
\sum\limits_{k=1}^M\sin^2(ic_k)&
=\frac{M+1}{2}
\end{align*}
and from~\eqref{eig.uni} we see that the eigenvalues are
\begin{equation}\label{eig.uni1}
  {\rm Eig}([{}^{\tt d}\!\Theta(c)])=\left\{\fractx{4M(M+1)}{r\pi^2}\fractx{\sin^2(\fractx{i r\pi}{2M})}{i^2}\;\Bigl|\; i\in\{1,2,\dots,M\}\right\},
\end{equation}
and from Lemma~\ref{L:decsintt}, we have that
\[
  {}^{\tt d}\!\vartheta(c^M)= \min\limits_{1\le i\le M}\fractx{4M(M+1)}{r\pi^2}\fractx{\sin^2(\fractx{i r\pi}{2M})}{i^2} =\fractx{4(M+1)}{r\pi^2M}\sin^2(\fractx{r\pi}{2}),
\]
and all the eigenvalues are simple.
\end{proof}


\section{The case of Neumann boundary conditions}\label{S:Neumann}
 In this section we present the proof of the Main Theorems~\ref{MT:Neumxe}.  Similarly to the Dirichlet case we consider the
indicator actuators~$\Cu_M(c)$ as in~\eqref{eq:U_Morthnor}. Now we consider the set of eigenvalues and orthonormalised eigenfunctions of the Laplacian
in~$(0,\pi)$ under Neumann boundary conditions,  which are given by  
\[{}^{\tt n}\!\alpha_i\coloneqq
 (i-1)^2,\quad i\ge 1;
\qquad\mbox{and}\quad {}^{\tt n}\!e_i\coloneqq
\begin{cases}
 (\frac{1}{\pi})^\frac{1}{2}\cos((i-1) x) ,&\mbox{if }i=1,\\
 (\frac{2}{\pi})^\frac{1}{2}\cos((i-1) x),&\mbox{if }i\ge2.
\end{cases}
\]
We set the ordered set~${}^{\tt n}\!\Ce_M\coloneqq({}^{\tt n}\!e_1,{}^{\tt n}\!e_2,\dots,{}^{\tt n}\!e_M)$ consisting of the
 of the first $M$ orthonormalised eigenfunctions and the linear spans
\[
U_M=\linspan\{1_{\omega_1},1_{\omega_2},\dots,1_{\omega_M}\},\qquad
{}^{\tt n}\!E_M\coloneqq\linspan\{{}^{\tt n}\!e_1,{}^{\tt n}\!e_2,\dots,{}^{\tt n}\!e_M\}.
\]

In this case we find
 \[
({}^{\tt n}\!e_i,\overline 1_{\omega_j})_{L^2}=
\begin{cases}
 (\frac{M}{r\pi^2})^\frac{1}{2}\frac{r\pi}{M}=(\frac{r}{M})^\frac{1}{2},&\mbox{if }i=1;\\
 (\frac{8M}{r\pi^2})^\frac{1}{2}\frac{\sin((i-1) \delta_M )\cos((i-1)c_j)}{i-1},&\mbox{if }i\ge2.
\end{cases}.
 \]

 We have the analogous of Lemma~\ref{L:dirsum_indplace}, for Neumann boundary conditions.
 Again we denote~$L^2=L^2(0,\pi)$, and~$\delta_M=\frac{r\pi}{2M}$.

\begin{lemma}\label{L:dirsum_indplace.Neu}
  Let~$c=(c_1,c_2,\dots,c_M)\in(0,\pi)^M$ and $\omega_j$ be as in~\eqref{act-eqlen},  with $c_i\ne c_j$ for all~$i\ne j$. 
  Then we have $L^2= U_M(c)\oplus {}^{\tt n}\!E_M^\perp$. 
\end{lemma}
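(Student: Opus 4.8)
The plan is to follow the proof of Lemma~\ref{L:dirsum_indplace} verbatim, simply replacing the sine system by the cosine system dictated by the Neumann eigenfunctions. By Lemma~\ref{L:equiv_nop}, the direct sum $L^2 = U_M(c)\oplus {}^{\tt n}\!E_M^\perp$ holds if and only if the matrix $[({}^{\tt n}\!\Ce_M,\Cu_M(c))_{L^2}]$ is invertible, so the whole task reduces to establishing this invertibility. First I would insert the explicit inner products computed just above: the first row of this matrix is constant, with every entry equal to $(\fractx{r}{M})^\frac12\ne0$, while for $i\ge2$ the $(i,j)$-entry equals $(\fractx{8M}{r\pi^2})^\frac12\fractx{\sin((i-1)\delta_M)}{i-1}\cos((i-1)c_j)$. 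Since $r\in(0,1)$ and $i-1\in\{1,\dots,M-1\}$ we have $(i-1)\delta_M=(i-1)\fractx{r\pi}{2M}\in(0,\fractx{\pi}{2})$, so the scalar factor $(\fractx{8M}{r\pi^2})^\frac12\fractx{\sin((i-1)\delta_M)}{i-1}$ is nonzero for each $i\ge2$. Dividing the $i$-th row by these nonzero constants (and the first row by $(\fractx{r}{M})^\frac12$) is an invertibility-preserving operation, and reduces the problem to the invertibility of the cosine matrix $\Xi_M^{\tt n}:=[\cos((i-1)c_j)]_{i,j=1,\dots,M}$.

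For the invertibility of $\Xi_M^{\tt n}$ I would argue exactly as in Lemma~\ref{L:dirsum_indplace}. Suppose $v\in\bbR^{M\times1}$ satisfies $v^\top\Xi_M^{\tt n}=0$; then the cosine polynomial $f(x):=\sum_{i=1}^M v_i\cos((i-1)x)$ vanishes at the $M$ pairwise distinct points $c_1,\dots,c_M\in(0,\pi)$. The clean way to count its zeros is the Chebyshev substitution $\cos((i-1)x)=T_{i-1}(\cos x)$: setting $t=\cos x$ writes $f(x)=P(\cos x)$, where $P(t):=\sum_{i=1}^M v_i T_{i-1}(t)$ is an algebraic polynomial of degree at most $M-1$. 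Because $x\mapsto\cos x$ is a strictly decreasing bijection of $(0,\pi)$ onto $(-1,1)$, the numbers $\cos c_1,\dots,\cos c_M$ are $M$ distinct zeros of $P$ in $(-1,1)$; a polynomial of degree at most $M-1$ with $M$ distinct roots must vanish identically, so $P\equiv0$. As $\{T_0,\dots,T_{M-1}\}$ is a basis of the polynomials of degree at most $M-1$, this forces $v=0$, whence $\Xi_M^{\tt n}$ is injective, hence invertible. Combined with the row reduction this yields the invertibility of $[({}^{\tt n}\!\Ce_M,\Cu_M(c))_{L^2}]$ and therefore the claimed splitting.

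The argument has essentially no hard step; the one point requiring care is the bound on the number of zeros of a cosine polynomial, which plays the role of \cite[Prop.~4.1]{KunRod-pp17} in the Dirichlet case. The Chebyshev substitution reduces this to the elementary fundamental-theorem-of-algebra count, and the only thing to double-check is that the substitution is injective on $(0,\pi)$ and that the degree of $P$ never exceeds $M-1$. The appeal to $r\in(0,1)$ is likewise minor: it only guarantees that the diagonal scaling factors $\sin((i-1)\delta_M)$ do not vanish, so that the reduction to $\Xi_M^{\tt n}$ is legitimate.
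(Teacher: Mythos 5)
Your proposal is correct. The reduction to the invertibility of the cosine matrix $\Xi_M^{\tt n}=[\cos((i-1)c_j)]$ by dividing out the nonzero row factors $(\fractx{r}{M})^{\frac12}$ and $(\fractx{8M}{r\pi^2})^{\frac12}\fractx{\sin((i-1)\delta_M)}{i-1}$ is exactly what the paper does. Where you diverge is in the zero count for the cosine polynomial $f(x)=\sum_{i=1}^M v_i\cos((i-1)x)$. The paper applies Rolle's theorem: between consecutive zeros $c_i<c_{i+1}$ of $f$ it finds distinct critical points $\xi_1,\dots,\xi_{M-1}$, so that $f'(x)=-\sum_{j=1}^{M-1}jv_{j+1}\sin(jx)$ has $M-1$ distinct zeros in $(0,\pi)$; it then invokes \cite[Prop.~4.1]{KunRod-pp17} (the sine-polynomial zero bound already used in the Dirichlet case) to conclude $f'\equiv0$, hence $f\equiv v_1$ and finally $v=0$. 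You instead use the Chebyshev substitution $\cos((i-1)x)=T_{i-1}(\cos x)$, turning $f$ into an algebraic polynomial $P$ of degree at most $M-1$ with $M$ distinct roots $\cos c_1,\dots,\cos c_M$ in $(-1,1)$, so $P\equiv0$ and $v=0$ since the $T_k$ form a basis. Your route is self-contained and slightly more direct: it avoids both the external citation and the minor bookkeeping of ordering the $c_j$ and verifying that the Rolle points are pairwise distinct. The paper's route has the advantage of keeping the Neumann argument structurally parallel to the Dirichlet one and of reusing the proposition it has already cited. Both are sound.
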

\begin{proof}
  By Lemma~\ref{L:equiv_nop} it is sufficient to prove that the matrix~$[({}^{\tt n}\!\Ce_M,\Cu_M(c))_{L^2}]$ is invertible,
 which in turn is equivalent to the invertibility of $
   \Xi_M\coloneqq      [ \cos((i-1)c_j)]$, because we may divide the $1$-st row of~$[({}^{\tt n}\!\Ce_M,\Cu_M(c))_{L^2}]$
   by ~$(\frac{r}{M})^\frac{1}{2}$ and the $i$-th row, $i>1$, by
~$(\fractx{8M}{r\pi^2})^\frac{1}{2}\frac{\sin((i-1) \delta_M)}{i-1}\ne0$.

Let~$v\in\bbR^{1\times M}$ satisfy~$v\Xi_M=0$,
which means that the function
$
 f(x)\coloneqq\textstyle\sum\limits_{i=1}^M v_i\cos((i-1)x)
$
vanishes for $x\in\{c_1,c_2,\dots,c_M\}$. Necessarily the smooth function~$f$ must have a critical value~$\xi_i\in(c_i,c_{i+1})$,
$i\in\{1,2,\dots,M-1\}$. That is, $g(x)\coloneqq\frac{\ed}{\ed x}f(x)=-\textstyle\sum\limits_{j=1}^{M-1} jv_{j+1}\sin(jx)$ vanishes for
$x\in\{\xi_1,\xi_2,\dots,\xi_{M-1}\}$. Observe that $\xi_n\ne\xi_m$, for all~$n\ne m$, because $c_i\ne c_j$, for all~$i\ne j$.

By
~\cite[Prop. 4.1]{KunRod-pp17}, we can conclude that necessarily~$g=0$. Therefore we conclude that ~$f(x)=v_1\cos(0x)=v_1$ is constant.
Necessarily~$f=0$ because
$f(c_i)=0$. Therefore~$v=0$, and thus $\Xi_M$ is invertible.
\end{proof}

Now consider the matrix~$[{}^{\tt n}\!\Theta(c)]=[(\Cu_M(c),{}^{\tt n}\!\Ce_M)_{L^2}]^\top[(\Cu_M(c),{}^{\tt n}\!\Ce_M)_{L^2}]$.

Observe that the Main Theorem~\ref{MT:Neumxe} is a corollary of the following theorem.
\begin{theorem}\label{T:Neumxe}
Let~$r\in(0,1)$, and consider
 the location $c^M$ of our actuators given by~\eqref{Dmxe}.
 Then~$[{}^{\tt n}\!\Theta( c^M)]$ is a diagonal matrix and its smallest eigenvalue is given by
\[
 {}^{\tt n}\!\vartheta( c^M)=\fractx{4M^2}{r\pi^2(M-1)^2}\sin^2((M-1)\delta_M),\quad\mbox{for}\quad M\ge2,
\]
and is simple.
For~$M=1$ the only eigenvalue is~$1$.

In particular~$\{{}^{\tt n}\!\vartheta( c^M)\}_{M\ge 1}$ decreases and converges to 
$
\frac{4}{r\pi^2}\sin^2\left(\frac{r\pi}{2}\right)
$. 
\end{theorem}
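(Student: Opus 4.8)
The plan is to run the proof of Theorem~\ref{T:Dirmxe} almost verbatim, replacing the sine eigenfunctions by the cosine ones and carrying along the extra constant mode ${}^{\tt n}\!e_1$. Since the \eqref{Dmxe}-centers $c_j^M=\fractx{(2j-1)\pi}{2M}$ are pairwise distinct (indeed disjoint, because $r<1$ forces $c_{j+1}-c_j=\fractx{\pi}{M}\ge\fractx{r\pi}{M}$), Lemma~\ref{L:dirsum_indplace.Neu} guarantees $L^2=U_M(c^M)\oplus{}^{\tt n}\!E_M^\perp$, so $[{}^{\tt n}\!\Theta(c^M)]$ is the relevant object. First I would insert the inner-product formula for $({}^{\tt n}\!e_i,\overline 1_{\omega_k})_{L^2}$ recorded above into $[{}^{\tt n}\!\Theta(c)]_{ij}=\sum_{k=1}^M({}^{\tt n}\!e_i,\overline 1_{\omega_k})_{L^2}({}^{\tt n}\!e_j,\overline 1_{\omega_k})_{L^2}$. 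For $i,j\ge2$ the identity $2\cos((i-1)c_k)\cos((j-1)c_k)=\cos((i-j)c_k)+\cos((i+j-2)c_k)$ yields
\[
 [{}^{\tt n}\!\Theta(c)]_{ij}=\fractx{4M}{r\pi^2}\fractx{\sin((i-1)\delta_M)\sin((j-1)\delta_M)}{(i-1)(j-1)}\sum_{k=1}^M\bigl(\cos((i-j)c_k)+\cos((i+j-2)c_k)\bigr),
\]
while for $i=1$, $j\ge2$ one gets $[{}^{\tt n}\!\Theta(c)]_{1j}=(\fractx{r}{M})^{\frac12}(\fractx{8M}{r\pi^2})^{\frac12}\fractx{\sin((j-1)\delta_M)}{j-1}\sum_{k=1}^M\cos((j-1)c_k)$.

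The \emph{diagonality} then drops out of Lemma~\ref{L:sumcos0}: for $i\neq j$ with $i,j\ge2$ both frequencies satisfy $1\le|i-j|\le M-2$ and $3\le i+j-2\le2M-3$, hence lie in $\{1,\dots,2M-1\}$ and the two cosine sums vanish; for $i=1$, $j\ge2$ the frequency $j-1\in\{1,\dots,M-1\}$ is again covered, so $[{}^{\tt n}\!\Theta(c)]_{1j}=0$. By symmetry the whole off-diagonal vanishes. For the diagonal, $\sum_{k=1}^M\cos^2((i-1)c_k)=\tfrac{M}{2}+\tfrac12\sum_{k=1}^M\cos(2(i-1)c_k)=\tfrac{M}{2}$ for $i\ge2$ (here $2(i-1)\le2M-2\le2M-1$, so Lemma~\ref{L:sumcos0} applies with \emph{no} exceptional top index, in contrast to the Dirichlet case), giving
\[
 [{}^{\tt n}\!\Theta(c)]_{11}=\sum_{k=1}^M\fractx{r}{M}=r,\qquad
 [{}^{\tt n}\!\Theta(c)]_{ii}=\fractx{4M^2}{r\pi^2}\fractx{\sin^2((i-1)\delta_M)}{(i-1)^2}\quad(i\ge2).
\]
By Lemma~\ref{L:decsintt} the map $p\mapsto\sin^2(p\delta_M)/p^2$ is strictly decreasing on $(0,M]$, so among $i\in\{2,\dots,M\}$ (i.e.\ $p=i-1\in\{1,\dots,M-1\}$) the smallest entry is the one at $i=M$, namely $\fractx{4M^2}{r\pi^2(M-1)^2}\sin^2((M-1)\delta_M)$, and these $M-1$ values are pairwise distinct.

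It then remains to compare this candidate with the constant-mode entry $r$, and to handle monotonicity and the limit. Writing $\theta_M:=(M-1)\delta_M=\fractx{r\pi}{2}\bigl(1-\fractx{1}{M}\bigr)$, so that $\fractx{M}{M-1}=\fractx{r\pi}{2\theta_M}$, the candidate becomes
\[
 {}^{\tt n}\!\vartheta(c^M)=\fractx{4M^2}{r\pi^2(M-1)^2}\sin^2(\theta_M)=r\,\fractx{\sin^2\theta_M}{\theta_M^2},\qquad \theta_M\in(0,\tfrac{\pi}{2}).
\]
Since $\sin\theta_M<\theta_M$, this is strictly below $r$, so it is the overall smallest diagonal entry, strictly smaller than every other one, hence \emph{simple}. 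For $M=1$ the matrix is the single scalar $[{}^{\tt n}\!\Theta(c^1)]_{11}=r$ (this is the value I obtain; the statement's ``$1$'' should read $r$, since for one actuator $\|P\|^2=1/r$). Finally, $\theta_M$ increases to $\fractx{r\pi}{2}$ and $\theta\mapsto(\sin\theta/\theta)^2$ is strictly decreasing on $(0,\tfrac\pi2)$ — the derivative $\cos\theta\,(\theta-\tan\theta)/\theta^2<0$ is exactly the computation inside Lemma~\ref{L:decsintt} — so ${}^{\tt n}\!\vartheta(c^M)$ decreases and tends to $r\,\sin^2(\tfrac{r\pi}{2})/(\tfrac{r\pi}{2})^2=\fractx{4}{r\pi^2}\sin^2(\tfrac{r\pi}{2})$.

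The genuinely new difficulty, absent in the Dirichlet setting, is the constant eigenfunction ${}^{\tt n}\!e_1$: it produces the diagonal entry $r$ that does not fit the $\sin^2(\cdot)/(\cdot)^2$ family, so one must verify it is never the minimiser. The crux is precisely the inequality ${}^{\tt n}\!\vartheta(c^M)<r$, which the substitution $\theta_M=(M-1)\delta_M$ reduces cleanly to $\sin\theta_M<\theta_M$; everything else is a transcription of the Dirichlet argument, simplified by the fact that the top frequency $2(M-1)$ no longer leaves the range of Lemma~\ref{L:sumcos0}.
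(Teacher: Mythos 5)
Your proof is correct and follows essentially the same route as the paper: diagonality via Lemma~\ref{L:sumcos0} applied to the frequencies $|i-j|$, $i+j-2$ and $j-1$, the diagonal entries $r$ and $\fractx{4M^2}{r\pi^2}\fractx{\sin^2((i-1)\delta_M)}{(i-1)^2}$, Lemma~\ref{L:decsintt} for the minimum over $i\ge2$, and the comparison with $r$ reduced to $\sin\theta<\theta$ (the paper writes this as $\fractx{\sin x}{x}<1$). Your remark that for $M=1$ the only eigenvalue is $r$ rather than $1$ agrees with the paper's own proof, which computes $[{}^{\tt n}\!\Theta(c^1)]=[r]$, so the ``$1$'' in the theorem statement is evidently a typo.
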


\begin{proof}
We compute now the $(i,j)$-entry of~$[{}^{\tt n}\!\Theta(c)]$. Let us first consider the case~$i>j=1$. Then,
for the entry~$[{}^{\tt n}\!\Theta(c)]_{i1}$, using~\eqref{sum.ext} we find
\begin{equation}\label{ThetaNeui1}
  [{}^{\tt n}\!\Theta(c)]_{i1}
= \fractx{\sqrt8}{\pi}\fractx{\sin(\fractx{(i-1) r\pi}{2M})}{i-1}\sum\limits_{k=1}^M\cos((i-1)c_k)=0.
\end{equation}
Now for any given~$m\ge 1$, such that~$1\le m<2M$,
we find that
\begin{align*}
\sum\limits_{k=1}^M\cos(mc_k)&=\sum\limits_{k=1}^M\cos(\fractx{m(2k-1)\pi}{2M})=\sum\limits_{n=0}^{M-1}\cos(\fractx{m(2n+1)\pi}{2M})
=\sum\limits_{n=0}^{M-1}\cos(\fractx{m\pi}{2M}+n\fractx{m\pi}{M}).
\end{align*}
Using again Lemma~\ref{L:sumcos0} we obtain
\begin{equation}\label{diag.Neu1}
  [{}^{\tt n}\!\Theta(c)]_{i1}=0,\quad\mbox{for all}\quad i>1.
\end{equation}

Let us now consider the case~$i>j>1$. Then, for the entry~$[{}^{\tt n}\!\Theta(c)]_{ij}$, we find
\begin{align*}
  [{}^{\tt n}\!\Theta(c)]_{ij}
&= \fractx{8M}{r\pi^2}\fractx{\sin((i-1) \delta_M)\sin((j-1) \delta_M)}{(i-1)(j-1)}\sum\limits_{k=1}^M\cos((i-1)c_k)\cos((j-1)c_k)
\end{align*}
and using Lemma~\ref{L:sumcos0} once more we get,
\begin{align*}
\sum\limits_{k=1}^M\cos((i-1)c_k)\cos((j-1)c_k)=\frac{1}{2}\sum\limits_{k=1}^M(\cos((i-j)c_k)+\cos((i+j-2)c_k))=0.
\end{align*}
Therefore
\begin{equation}\label{diag.Neu2}
  [{}^{\tt n}\!\Theta(c)]_{ij}=0,\quad\mbox{for all}\quad i>j>1.
\end{equation}
From~\eqref{diag.Neu1}, \eqref{diag.Neu2}, and the symmetry of~$[{}^{\tt n}\!\Theta(c)]$, it follows that~$[{}^{\tt n}\!\Theta(c)]$ is a diagonal matrix,
and its eigenvalues are its diagonal elements
The eigenvalues of~$[{}^{\tt n}\!\Theta(c)]$ are given by the elements in its diagonal, which are
\begin{equation}\label{eig.ext.Neu}
  {\rm Eig}([{}^{\tt n}\!\Theta(c)])=\left\{r\right\}\textstyle\bigcup\left\{\fractx{8M}{r\pi^2}\fractx{\sin^2((i-1)\delta_M)}{(i-1)^2}{\textstyle\sum\limits_{k=1}^M}\cos^2((i-1)c_k)\;\Bigl|\;
i\in\{2,\dots,M\}\right\}. 
\end{equation}
Now we observe that~$\sum\limits_{k=1}^M\cos^2((i-1)c_k)=\frac{1}{2}\sum\limits_{k=1}^M(1+\cos(2(i-1)c_k))$,
then using again~\eqref{sum.ext} (in case~$i>1$), 
\begin{equation}\label{eig.ext1.Neu}
  {\rm Eig}([{}^{\tt n}\!\Theta(c)])=\left\{r\right\}\textstyle\bigcup\left\{\fractx{4M^2}{r\pi^2}\fractx{\sin^2((i-1)\delta_M)}{(i-1)^2}
 \;\Bigl|\;
i\in\{2,\dots,M\}\right\}.
\end{equation}
In view of Lemma~\ref{L:decsintt}, we conclude that ${}^{\tt n}\!\vartheta(c)=\min\left\{r,\fractx{4M^2}{r\pi^2(M-1)^2}\sin^2((M-1) \delta_M)\right\}$. Now, from
\[\fractx{4M^2}{r\pi^2(M-1)^2}\sin^2((M-1)\delta_M)=r\left(\fractx{2M}{r\pi(M-1)}\right)^2\sin^2\left(\fractx{(M-1)r\pi}{2M}\right)
<r\]
because~$\frac{\sin(x)}{x}<1$ for all~$x\in(0,\frac{\pi}{2}]$ and~$0<\fractx{(M-1)r\pi}{2M}<\frac{\pi}{2}$, we can conclude that
${}^{\tt n}\!\vartheta(c)=\fractx{4M^2}{r\pi^2(M-1)^2}\sin^2((M-1) \delta_M)$.
In particular~${}^{\tt n}\!\vartheta(c)$ is simple. Finally, for~$M=1$ we have that~$[{}^{\tt n}\!\Theta(c)]=[r]$,  whose only eigenvalue is~$r$.
\end{proof}

\begin{remark}\label{R:compDirNeu}
Comparing Theorems~\ref{T:Dirmxe} and~\ref{T:Neumxe} we see that the smallest eigenvalue do coincide for Dirichlet and Neumann boundary conditions with
the exception of the case~$M=1$. It is also interesting to observe that in Theorems~\ref{T:Dirmxe}, \ref{T:Diruni}, and~\ref{T:Neumxe}, the limit of the
smallest eigenvalues as $M$ increases is the same, and equals~$\frac{4}{r\pi^2}\sin^2\left(\frac{r\pi}{2}\right)$. However, we must
underline that for the location~\eqref{Duni} under Neumann boundary conditions, we could not find the expression for the smallest eigenvalue.
Numerical simulations that we will
present later in  Section~\ref{S:OtherLoc}, show that that eigenvalue is considerably smaller in the latter setting.
We should mention also that finding the eigenvalues
explicitly was possible in Theorems~\ref{T:Dirmxe}, \ref{T:Diruni}, and~\ref{T:Neumxe}, due to the fact that the corresponding
matrices~$[\Theta(c^M)]$ are diagonal. However, for a general location those matrices are not necessarily diagonal, and so
finding the smallest eigenvalue becomes a more difficult problem. This is actually the reason we could not find the explicit expression for the smallest
eigenvalue for the location~\eqref{Duni} under Neumann boundary conditions.
More comments are given in  Sect.~\ref{S:OtherLoc}.
\end{remark}

\section{Other particular locations for the actuators}\label{S:OtherLoc}
We investigate here also the location
\begin{align}\stepcounter{equation}
c_j^M&=\frac{(1-r)\pi}{2}+\frac{(2j-1)r\pi}{2M},
\qquad\mbox{with}\qquad  j\in\{1,2,\dots,M\}. \tag{(\theequation)--{\tt con}}\label{Dcon}
\end{align}
which have also been studied numerically in~\cite[section~4.7]{KunRod-pp17}, and where the results of numerical
 simulations, for Dirichlet boundary conditions,
 have shown that, with~$c=c^M$, the smallest eigenvalue~${}^{\tt d}\!\vartheta(c)$ takes very small values
 and are not conclusive to decide whether~${}^{\tt d}\!\vartheta(c)$ remains away from zero or not.
 
 Recall that in the settings in Sections~\ref{sS:proofDir} and~\ref{S:Neumann}
 the matrices~$[{}^{\tt d}\!\Theta(c)]$ and~$[{}^{\tt n}\!\Theta(c)]$ are diagonal.  
 Below, we show that for the location~\eqref{Dcon} 
 the matrices~$[{}^{\tt d}\!\Theta(c)]$ and~$[{}^{\tt n}\!\Theta(c)]$ are not necessarily diagonal.
 In such cases finding the eigenvalues can be a difficult problem, because they are not necessarily
 the entries of the diagonal. That is why it seems difficult to
 find an analytical expression for the first eigenvalue.

 \begin{theorem}\label{T:Dircon}
   Under Dirichlet boundary conditions, with the location as in~\eqref{Dcon}, the matrix~$[{}^{\tt d}\!\Theta(c^M)]$ is not necessarily diagonal.
\end{theorem}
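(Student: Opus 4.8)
The plan is to reuse the general entrywise formula \eqref{orthext1}, which was derived from the definition of $[{}^{\tt d}\!\Theta(c)]$ and the product identity $2\sin(ic_k)\sin(jc_k)=\cos((i-j)c_k)-\cos((i+j)c_k)$, and hence holds for \emph{arbitrary} actuator positions. By that formula, for $i>j$,
\[
 [{}^{\tt d}\!\Theta(c^M)]_{ij}=\fractx{4M}{r\pi^2}\fractx{\sin(i\delta_M)\sin(j\delta_M)}{ij}\sum_{k=1}^M\bigl(\cos((i-j)c_k)-\cos((i+j)c_k)\bigr),
\]
so the whole question reduces to evaluating the character sum $\sum_{k=1}^M\cos(mc_k)$ for the new centers. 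First I would insert $c_k=\fractx{(1-r)\pi}{2}+\fractx{(2k-1)r\pi}{2M}$ and rewrite the sum in the form $\sum_{n=0}^{M-1}\cos(a+nb)$ with $a=\fractx{m(1-r)\pi}{2}+\fractx{mr\pi}{2M}$ and $b=\fractx{mr\pi}{M}$, and then apply the closed-form identity \eqref{sumcos0} (legitimate here since $\sin(\fractx{mr\pi}{2M})\ne0$ for $1\le m\le 2M$ and $r\in(0,1)$).

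A short computation collapses the resulting phase $a+(M-1)\fractx{b}{2}$ to exactly $\fractx{m\pi}{2}$, yielding
\[
 \sum_{k=1}^M\cos(mc_k)=\fractx{\sin(\fractx{mr\pi}{2})}{\sin(\fractx{mr\pi}{2M})}\cos\Bigl(\fractx{m\pi}{2}\Bigr),\qquad 1\le m\le 2M.
\]
This vanishes for odd $m$, so entries with $i,j$ of opposite parity (for which $i\pm j$ is odd) still vanish. The decisive point is the even-$m$ behaviour: the sum then equals $(-1)^{m/2}\fractx{\sin(mr\pi/2)}{\sin(mr\pi/2M)}$, which genuinely depends on $m$. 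This is precisely where \eqref{Dcon} differs from \eqref{Duni}: in the uniform case the even-$m$ sum was the constant $-1$ (see \eqref{sum.uni}), so the two contributions $\cos((i-j)c_k)$ and $\cos((i+j)c_k)$ cancelled and the matrix stayed diagonal, whereas for \eqref{Dcon} no such cancellation occurs for same-parity pairs $(i,j)$.

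It then remains only to exhibit one instance in which the surviving difference is nonzero. I would take $M=3$ and $r=\fractx{1}{2}$, for which the centers are $c_1=\fractx{\pi}{3}$, $c_2=\fractx{\pi}{2}$, $c_3=\fractx{2\pi}{3}$, and inspect the entry $(i,j)=(3,1)$ (so $i-j=2$, $i+j=4$). A direct evaluation gives $\sum_{k=1}^3\cos(2c_k)=-2$ and $\sum_{k=1}^3\cos(4c_k)=0$, so the bracketed difference equals $-2$, while the prefactor $\fractx{4M}{r\pi^2}\fractx{\sin(3\delta_M)\sin(\delta_M)}{3}$ is manifestly nonzero. Hence $[{}^{\tt d}\!\Theta(c^3)]_{31}\ne0$ and the matrix fails to be diagonal, which establishes the claim. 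There is no serious analytic obstacle in this argument; the only delicate part is recognising \emph{why} the cancellation that rendered \eqref{Duni} diagonal breaks down for \eqref{Dcon}, and then choosing $M$ small and $r$ convenient enough that the counterexample is verifiable by hand.
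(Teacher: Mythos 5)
Your proposal is correct and follows essentially the same route as the paper: both rest on the entrywise formula \eqref{orthext1} and exhibit the same counterexample $M=3$, $r=\fractx{1}{2}$, where the $(1,3)$-entry is proportional to $\sum_{k=1}^3\cos(2c_k)-\sum_{k=1}^3\cos(4c_k)=-2-0=-2\ne0$, yielding the identical nonzero value $-\fractx{16\sin(\pi/12)\sin(\pi/4)}{\pi^2}$. The only difference is that you first derive the general closed form $\sum_{k=1}^M\cos(mc_k)=\fractx{\sin(mr\pi/2)}{\sin(mr\pi/2M)}\cos(\fractx{m\pi}{2})$ via \eqref{sumcos0}, which correctly explains \emph{why} the even-$m$ cancellation present for \eqref{Duni} fails here, whereas the paper simply evaluates the two sums directly for the chosen instance.
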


\begin{proof}
First of all we observe that, in the case~$M=2$ and for
a symmetric position~$c=c^2=(c_1,c_2)=(c_1,\pi-c_1)$ as in~\eqref{Dcon}, we have that
 the matrix~$[{}^{\tt d}\!\Theta(c^2)]$ is diagonal, this is a consequence of~\eqref{orthext1}, from which we have that
the entry in the~$2$-nd row and~$1$-st column, of~$[{}^{\tt d}\!\Theta(c^2)]$, is proportional to
\[ 
\sum\limits_{k=1}^2\left(\cos(c_k)-\cos(3c_k)\right)=\left(\cos(c_1)-\cos(3c_1)\right)+\left(\cos(c_2)-\cos(3c_2)\right)=0.
\]

Now we prove that in the case~$M=3$ and~$r=\frac{1}{2}$ the matrix~$[{}^{\tt d}\!\Theta(c^3)]$ is not diagonal.
In this case for the entry in the~$1$-st row and~$3$-rd column we find
 \begin{align*}
[{}^{\tt d}\!\Theta(c^3)]_{ij}
&=\fractx{24}{r\pi^2}\fractx{\sin(\fractx{1 r\pi}{6})\sin(\fractx{3 r\pi}{6})}{3}\sum\limits_{k=1}^3\sin(1c_k)\sin(3c_k)
=\fractx{4\sin(\fractx{1 r\pi}{6})\sin(\fractx{3 r\pi}{6})}{r\pi^2}\sum\limits_{k=1}^3\left(\cos(2c_k)-\cos(4c_k)\right).
\end{align*}
By computing the sums
\begin{subequations}\label{sumcon}
 \begin{align}
\sum\limits_{k=1}^3\cos(2c_k)&=\sum\limits_{k=1}^3\cos(\fractx{\pi}{2}+\fractx{(2j-1)\pi}{2M})=-\sum\limits_{k=1}^3\sin(\fractx{(2j-1)\pi}{6})
 =-2,\\
  \sum\limits_{k=1}^3\cos(4c_k)&=\sum\limits_{k=1}^3\cos(\pi+\fractx{(2j-1)\pi}{M})
 =-\sum\limits_{k=1}^3\cos(\fractx{(2j-1)\pi}{3})=0,
 \end{align}
\end{subequations}
we find~$[{}^{\tt d}\!\Theta(c^3)]_{ij}
=-\fractx{16\sin(\fractx{\pi}{12})\sin(\fractx{\pi}{4})}{\pi^2}\ne0$, with~$(i,j)=(1,3)$, which shows that~$[{}^{\tt d}\!\Theta(c^3)]$ is not a diagonal matrix.
\end{proof}

 \begin{theorem}\label{T:Neucon}
   Under Neumann boundary conditions, with the location as in~\eqref{Dcon}, the matrix~$[{}^{\tt n}\!\Theta(c^M)]$ is not necessarily diagonal.
\end{theorem}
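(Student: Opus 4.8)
The plan is to mirror the proof of Theorem~\ref{T:Dircon}: since a symmetric matrix fails to be diagonal as soon as one off-diagonal entry is nonzero, it suffices to exhibit a single pair $(M,r)$ for which $[{}^{\tt n}\!\Theta(c^M)]$ has a nonzero off-diagonal entry. I would reuse the entry formulas established in the proof of Theorem~\ref{T:Neumxe}, observing that the reductions there leading to~\eqref{ThetaNeui1} and to the formula for $[{}^{\tt n}\!\Theta(c)]_{ij}$ with $i>j>1$ hold for \emph{any} admissible (disjoint, distinct-centre) location. Diagonality in Theorem~\ref{T:Neumxe} was a consequence only of the special arithmetic of the location~\eqref{Dmxe}, entering through the vanishing sums~\eqref{sum.ext} of Lemma~\ref{L:sumcos0}. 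For the location~\eqref{Dcon} those sums need not vanish, and that is exactly what I would exploit.

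Concretely, I would take $M=3$ and $r=\fractx12$. Then~\eqref{Dcon} gives the centres $c^3=(\fractx{\pi}{3},\fractx{\pi}{2},\fractx{2\pi}{3})$ and $\delta_M=\fractx{\pi}{12}$, and I would examine the entry in the $3$-rd row and $1$-st column. By~\eqref{ThetaNeui1},
\[
 [{}^{\tt n}\!\Theta(c^3)]_{31}=\fractx{\sqrt8}{\pi}\fractx{\sin(2\delta_M)}{2}\sum\limits_{k=1}^3\cos(2c_k),
\]
so everything reduces to the single trigonometric sum $\sum_{k=1}^3\cos(2c_k)=\cos(\fractx{2\pi}{3})+\cos(\pi)+\cos(\fractx{4\pi}{3})=-2\ne0$. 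Inserting $\sin(2\delta_M)=\sin(\fractx{\pi}{6})=\fractx12$ then yields $[{}^{\tt n}\!\Theta(c^3)]_{31}=-\fractx{\sqrt2}{\pi}\ne0$, so the matrix is not diagonal.

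The argument is short and the only points requiring care are bookkeeping ones. First, because the first Neumann eigenfunction is constant, the prefactor in the first row/column is $\fractx{\sqrt8}{\pi}$ rather than the $\fractx{8M}{r\pi^2}$ that appears for $i,j>1$; I would keep track of this when applying~\eqref{ThetaNeui1}. Second, one should check that the obvious candidates do not accidentally vanish: since the centres~\eqref{Dcon} are symmetric about $\fractx{\pi}{2}$, every \emph{odd}-frequency sum $\sum_k\cos(m c_k)$ vanishes, so a nonzero entry must come from an even frequency, and $m=2$ is the smallest such choice. This is why the $(3,1)$ entry (rather than, say, the $(2,3)$ entry, whose sum one quickly checks to be $\fractx12\sum_k(\cos(c_k)+\cos(3c_k))=0$) is the natural one to use. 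Finally, as in the Dirichlet proof it is worth recording that for $M=2$ the symmetric pair $c_2=\pi-c_1$ forces $\sum_k\cos(c_k)=0$ and hence a diagonal matrix; this is precisely why the statement reads ``not necessarily diagonal'' and why $M=3$ is the first genuinely nondiagonal case. I do not anticipate a real obstacle here: the substantive work was already done once the entry formulas of Theorem~\ref{T:Neumxe} were in place, and what remains is a single explicit evaluation.
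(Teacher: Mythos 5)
Your proposal is correct and follows the same overall strategy as the paper: observe that the symmetric $M=2$ configuration gives a diagonal matrix (hence ``not necessarily''), then fix $M=3$, $r=\fractx{1}{2}$ and exhibit one nonvanishing off-diagonal entry by reducing it to sums $\sum_{k}\cos(mc_k)$, of which only the even-frequency ones survive the symmetry of the centres~\eqref{Dcon} about $\fractx{\pi}{2}$. The one genuine difference is the choice of entry, and here your version is actually the more careful one. You use the true $(3,1)$ entry of the $3\times3$ matrix, pairing the constant eigenfunction ${}^{\tt n}\!e_1$ with ${}^{\tt n}\!e_3$ (frequency $2$) via~\eqref{ThetaNeui1}, so everything collapses to the single sum $\sum_{k=1}^3\cos(2c_k)=-2$, and the value $-\fractx{\sqrt2}{\pi}\ne0$ you obtain is right. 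The paper instead evaluates $\sum_k\cos(1\cdot c_k)\cos(3\cdot c_k)$ with the prefactor $\fractx{8M}{r\pi^2}\fractx{\sin(\delta_M)\sin(3\delta_M)}{1\cdot3}$ and labels the result the $(1,3)$ entry; but with the Neumann indexing ${}^{\tt n}\!e_i\propto\cos((i-1)x)$ that quantity corresponds to frequencies $1$ and $3$, i.e.\ to a $(2,4)$ entry that does not exist in a $3\times3$ matrix --- the Dirichlet computation of Theorem~\ref{T:Dircon} appears to have been transplanted without shifting the indices. Your bookkeeping remarks (the distinct $\fractx{\sqrt8}{\pi}$ prefactor for the first row/column, the vanishing of all odd-frequency sums, and the check that the $(2,3)$ entry vanishes so that $(3,1)$ is the natural witness) are all accurate. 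No gaps.
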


\begin{proof}
Again, in the case~$M=2$ and for
a symmetric position~$c=c^2=(c_1,c_2)=(c_1,\pi-c_1)$ as in~\eqref{Dcon}, we have that
 the matrix~$[{}^{\tt n}\!\Theta(c^2)]$ is diagonal, as a consequence of~\eqref{ThetaNeui1}, from which we have that
the entry in the~$2$-nd row and~$1$-st column, of~$[{}^{\tt n}\!\Theta(c^2)]$, is proportional to
\[ 
\sum\limits_{k=1}^M\cos(c_k)=\cos(c_1)+\cos(c_2)=0.
\]

Now we prove that in the case~$M=3$ and~$r=\frac{1}{2}$ the matrix~$[{}^{\tt n}\!\Theta(c^3)]$ is not diagonal. 
In this case for the entry in the~$1$-st row and~$3$-rd column we find
 \begin{align*}
[{}^{\tt n}\!\Theta(c^3)]_{ij}
&=\fractx{24}{r\pi^2}\fractx{\sin(\fractx{1 r\pi}{6})\sin(\fractx{3 r\pi}{6})}{3}\sum\limits_{k=1}^3\cos(1c_k)\cos(3c_k)
=\fractx{4\sin(\fractx{1 r\pi}{6})\sin(\fractx{3 r\pi}{6})}{r\pi^2}\sum\limits_{k=1}^3\left(\cos(2c_k)+\cos(4c_k)\right).
\end{align*}
By~\eqref{sumcon}
we find~$[{}^{\tt n}\!\Theta(c^3)]_{ij}
=-\fractx{16\sin(\fractx{\pi}{12})\sin(\fractx{\pi}{4})}{\pi^2}\ne0$, with~$(i,j)=(1,3)$. Thus~$[{}^{\tt n}\!\Theta(c^3)]$ is not a diagonal matrix.
\end{proof}

The following theorem illustrates a difference between Dirichlet and Neumann boundary conditions. Recall that from Theorem~\ref{T:Diruni},
for the former boundary conditions the matrix is diagonal.

\begin{theorem}\label{T:Neuuni}
  Under Neumann boundary conditions, with the location as in~\eqref{Duni}, the matrix~$[{}^{\tt n}\!\Theta(c^M)]$ is not necessarily diagonal.
\end{theorem}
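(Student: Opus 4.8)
The matrix $[{}^{\tt n}\!\Theta(c^M)]=[(\Cu_M(c),{}^{\tt n}\!\Ce_M)_{L^2}]^\top[(\Cu_M(c),{}^{\tt n}\!\Ce_M)_{L^2}]$ is symmetric, so to establish non-diagonality it suffices to exhibit, for a single admissible pair $(M,r)$, one off-diagonal entry $[{}^{\tt n}\!\Theta(c^M)]_{ij}$, $i>j$, that does not vanish. The plan is to reuse the entry formulas derived in the proof of Theorem~\ref{T:Neumxe} together with the summation identity established for the location~\eqref{Duni} in the proof of Theorem~\ref{T:Diruni}. Recall from the computation leading to~\eqref{ThetaNeui1} that, for \emph{any} location $c$ and any $i>1$,
\[
 [{}^{\tt n}\!\Theta(c)]_{i1}=\fractx{\sqrt8}{\pi}\fractx{\sin((i-1)\delta_M)}{i-1}\sum_{k=1}^M\cos((i-1)c_k);
\]
what forced this entry to vanish in Theorem~\ref{T:Neumxe} was the full cancellation~\eqref{sum.ext}, which holds only for the location~\eqref{Dmxe}.

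For the location~\eqref{Duni}, by contrast, the identity~\eqref{sum.uni} gives $\sum_{k=1}^M\cos(mc_k)=0$ when $m$ is odd but $\sum_{k=1}^M\cos(mc_k)=-1$ when $m$ is even, for all $1\le m\le 2M$. This non-cancellation at even frequencies is exactly what breaks diagonality. Concretely, I would take $i=3$ (so that $i-1=2$ is even), $M=3$, and the admissible value $r=\fractx12$ (admissible since $M=3\ge\fractx{r}{1-r}=1$, so that the actuators are disjoint and $[{}^{\tt n}\!\Theta(c^3)]$ is well defined). With $c_k=\fractx{k\pi}{4}$ one has $\sum_{k=1}^3\cos(2c_k)=\cos(\fractx\pi2)+\cos(\pi)+\cos(\fractx{3\pi}2)=-1$, while $\delta_3=\fractx{r\pi}{6}=\fractx{\pi}{12}$ gives $\sin(2\delta_3)=\sin(\fractx\pi6)=\fractx12\ne0$. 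Hence
\[
 [{}^{\tt n}\!\Theta(c^3)]_{31}=\fractx{\sqrt8}{\pi}\,\fractx{\sin(2\delta_3)}{2}\,(-1)=-\fractx{\sqrt2}{2\pi}\ne0,
\]
so $[{}^{\tt n}\!\Theta(c^3)]$ is not diagonal. For completeness one may note, mirroring Theorems~\ref{T:Dircon} and~\ref{T:Neucon}, that for $M=2$ the single off-diagonal entry is proportional to $\sum_{k=1}^2\cos(c_k)$, an odd frequency, hence zero; thus non-diagonality first appears at $M=3$.

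There is no genuine analytic obstacle here: the whole content is the observation that, unlike for~\eqref{Dmxe}, the relevant cosine sums for~\eqref{Duni} do not all vanish. The only points needing care are bookkeeping ones, namely selecting an entry whose frequency $i-1$ (or, for a general $i>j>1$ entry, the frequencies $i-j$ and $i+j-2$) is even, checking that these frequencies lie in the range $1\le m\le 2M$ where~\eqref{sum.uni} applies, and verifying that the chosen $r$ meets the disjointness bound $M\ge\fractx{r}{1-r}$ of~\eqref{Duni}.
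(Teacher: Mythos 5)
Your argument is correct and follows essentially the same route as the paper: a counterexample at $M=3$, $r=\fractx{1}{2}$, exhibiting a nonzero off-diagonal entry of $[{}^{\tt n}\!\Theta(c^3)]$ via the non-vanishing of the even-frequency sums $\sum_{k=1}^{3}\cos(2c_k)$ for the location~\eqref{Duni}, together with the observation that the $M=2$ case remains diagonal by symmetry (odd frequencies). The only difference is the choice of witness entry — you evaluate $[{}^{\tt n}\!\Theta(c^3)]_{31}$, which couples the constant eigenfunction with $e_3$ via~\eqref{ThetaNeui1}, while the paper evaluates an entry coupling two non-constant modes — and both computations are valid and yield the same conclusion.
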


\begin{proof}
The location $c=c^2=(c_2,c_2)$ as in~\eqref{Duni} is symmetric, $c_2=\pi-c_1$, so proceeding as in as in the proof of 
Theorem~\ref{T:Neucon} we have that, in the case~$M=2$, 
 the matrix~$[{}^{\tt n}\!\Theta(c^2)]$ is diagonal.
 
Now we prove that in the case~$M=3$ and~$r=\frac{1}{2}$ the matrix~$[{}^{\tt n}\!\Theta(c^3)]$ is not diagonal. 
In this case  for the entry in the~$1$-st row and~$3$-rd column we find
 \begin{align*}
[{}^{\tt n}\!\Theta(c^3)]_{ij}
&=\fractx{24}{r\pi^2}\fractx{\sin(\fractx{1 r\pi}{6})\sin(\fractx{3 r\pi}{6})}{3}\sum\limits_{k=1}^3\cos(1c_k)\cos(3c_k)
=\fractx{4\sin(\fractx{1 r\pi}{6})\sin(\fractx{3 r\pi}{6})}{r\pi^2}\sum\limits_{k=1}^3\left(\cos(2c_k)+\cos(4c_k)\right).
\end{align*}
By computing the sums
\begin{subequations}\label{sumuniN}
 \begin{align}
\sum\limits_{k=1}^3\cos(2c_k)&=\sum\limits_{k=1}^3\cos(\fractx{k\pi}{2})=-1\\
  \sum\limits_{k=1}^3\cos(4c_k)&=\sum\limits_{k=1}^3\cos(k\pi)
 =-1;
 \end{align}
\end{subequations}
we find~$[{}^{\tt n}\!\Theta(c^3)]_{ij}
=-\fractx{16\sin(\fractx{\pi}{12})\sin(\fractx{\pi}{4})}{\pi^2}\ne0$, with~$(i,j)=(1,3)$,
which shows that~$[{}^{\tt n}\!\Theta(c^3)]$ is not a diagonal matrix.
\end{proof}

\subsubsection*{Comparison between the different locations} \label{sS:comp.mxeuni}
To simplify the writing, following~\cite{KunRod-pp17}, we will use the following notation concerning the distribution/location of the actuators:
 \begin{align*}
  \DD=\DD_{\rm act}&={\rm mxe}\quad\mbox{stand for the location as in~\eqref{Dmxe}},\\
  \DD=\DD_{\rm act}&={\rm uni}\quad\mbox{stand for the location as in~\eqref{Duni}},\\
  \DD=\DD_{\rm act}&={\rm con}\quad\mbox{stand for the location as in~\eqref{Dcon}},
 \end{align*}
which underlines that for~$\DD={\rm mxe}$, the actuators are located at the extremisers of the~$M$-th eigenfunction~$\sin(Mx)$, while~$\DD={\rm uni}$
stands for the uniform distribution of the actuators, and~$\DD={\rm con}$ underlines that the actuators are concentrated at the center of the interval
domain.

Recall that, for Dirichlet boundary conditions, the locations~$\DD={\rm mxe}$ and~$\DD={\rm uni}$ lead to a diagonal matrix~$[\Theta(c_M)]$.
Therefore, we know that its eigenvalues are its diagonal entries. This is what allowed us to
 derive an analytical expression for the smallest eigenvalue, as in  Theorems~\ref{MT:Dirmxe}, \ref{MT:Diruni}, and~\ref{MT:Neumxe}, respectively. Instead for the location~$\DD={\rm con}$ we
 we do not know an analytical expression for the smallest eigenvalue, and we do not know whether such eigenvalue
 remains bounded away from zero as~$M$ increases. 
 See the simulations in~\cite{KunRod-pp17}, which show that the first eigenvalue associated with~$\DD={\rm con}$
 is considerably smaller than the ones
 associated with~$\DD={\rm mxe}$ and~$\DD={\rm uni}$.

Observe that from Theorems~\ref{MT:Dirmxe}, \ref{MT:Diruni}, we know that the smallest eigenvalue
in both cases~$\DD={\rm mxe}$ and~$\DD={\rm uni}$ converges to the same
limit, that is, to~$\frac{4}{r\pi^2}\sin^2(\frac{r\pi}{2})$.

Figure~\ref{Fig:1Dsmallest_eigenvalue} shows a comparison between the smallest
eigenvalue~$\vartheta_M\coloneqq \vartheta(c^M)$ given in Theorems~\ref{MT:Dirmxe}, \ref{MT:Diruni}, and~\ref{MT:Neumxe}, respectively.
As expected we obtain the
behaviour as in~\cite{KunRod-pp17}, for the Dirichlet case, where those eigenvalues have been computed numerically. In
Figure~\ref{Fig:1Dsmallest_eigenvalue} and the following ones the annotation~${\rm bc}={\rm Dir}$, respectively~${\rm bc}={\rm Neu}$
means that the homogenous Dirichlet, respectively Neumann, boundary conditions have been considered in the computations.

\begin{figure}[ht]
\centering
\subfigure
{\epsfig{file=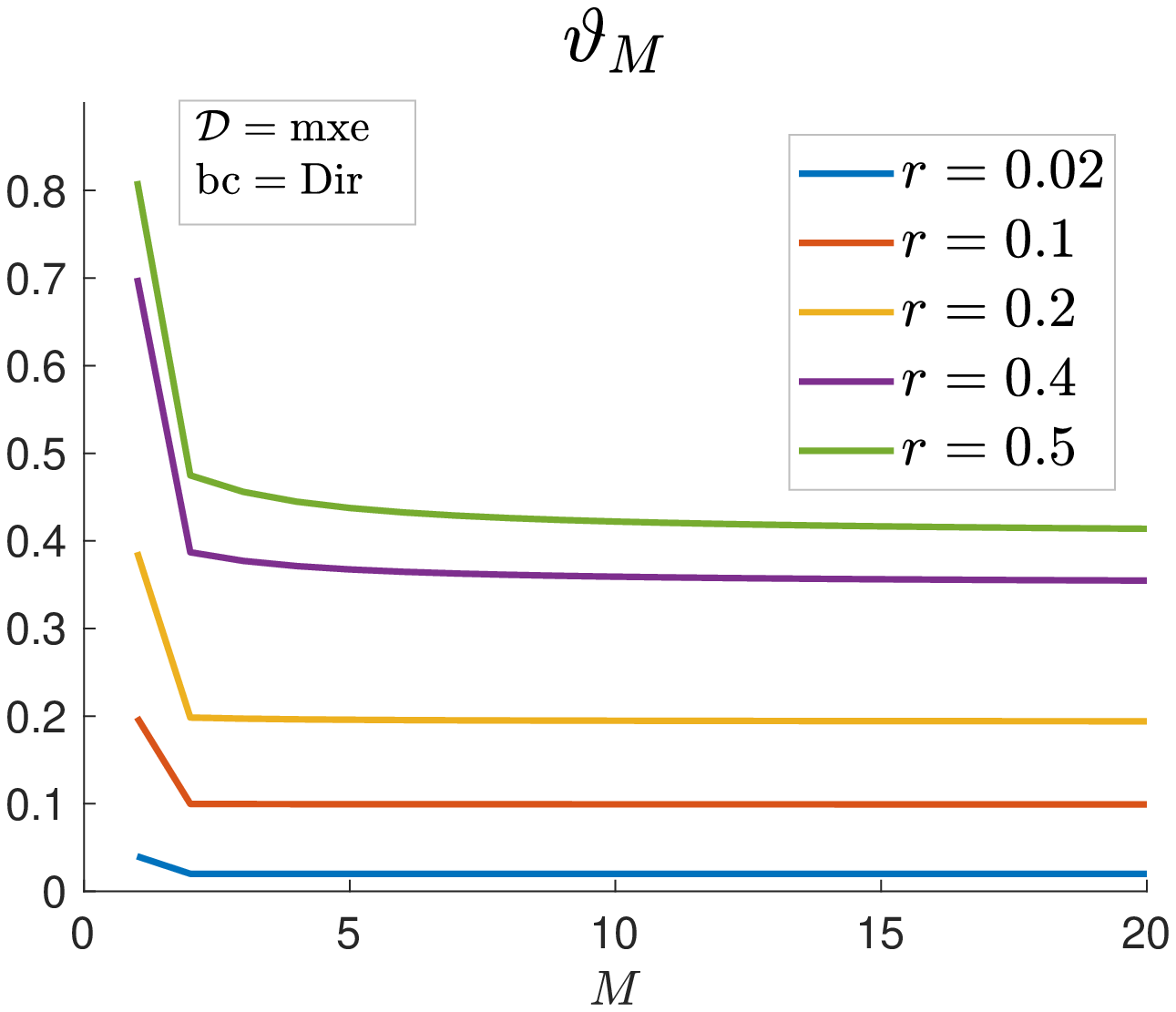,width=.325\linewidth,clip=}}
\subfigure
{\epsfig{file=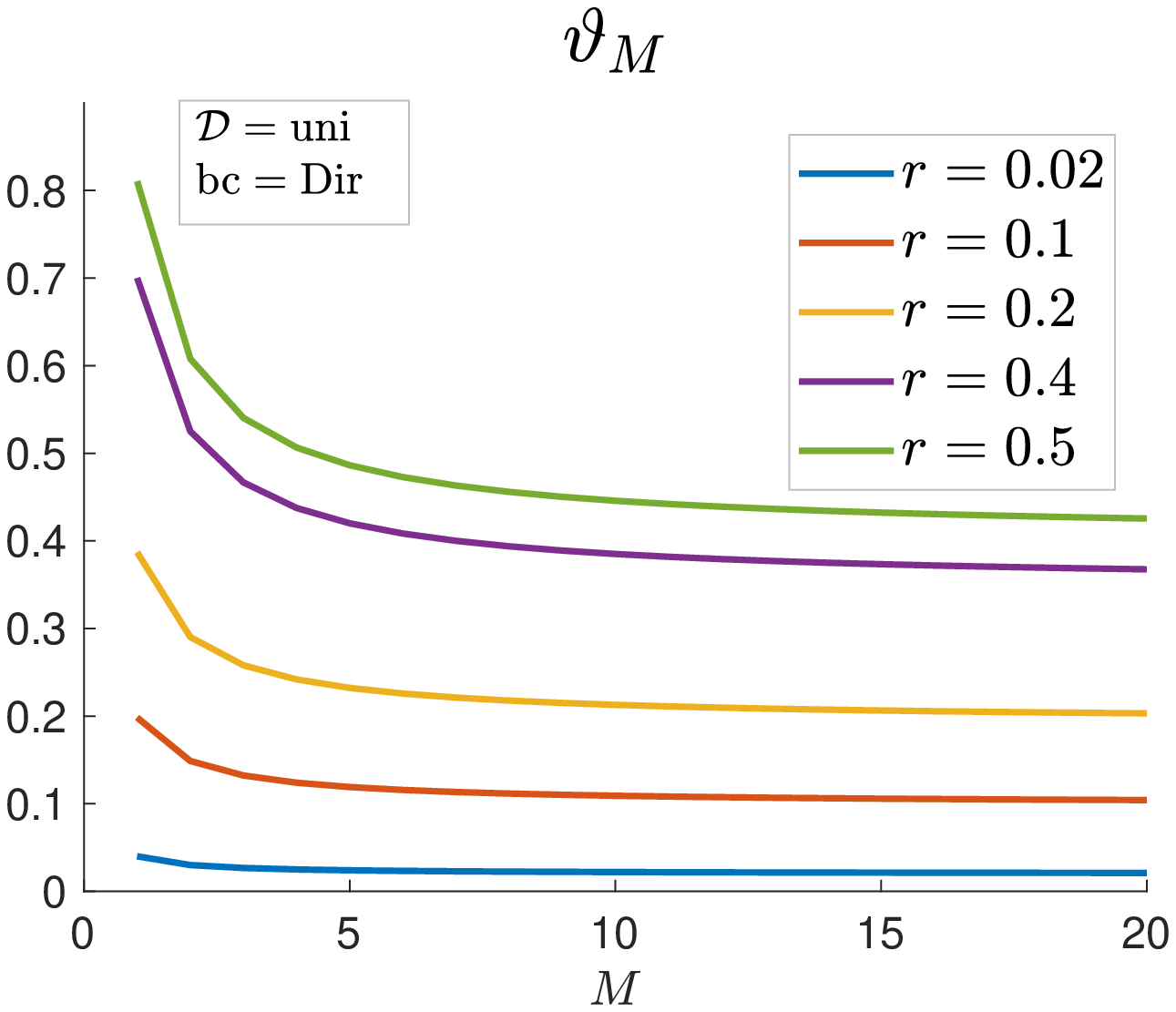,width=.325\linewidth,clip=}}
 \subfigure
 {\epsfig{file=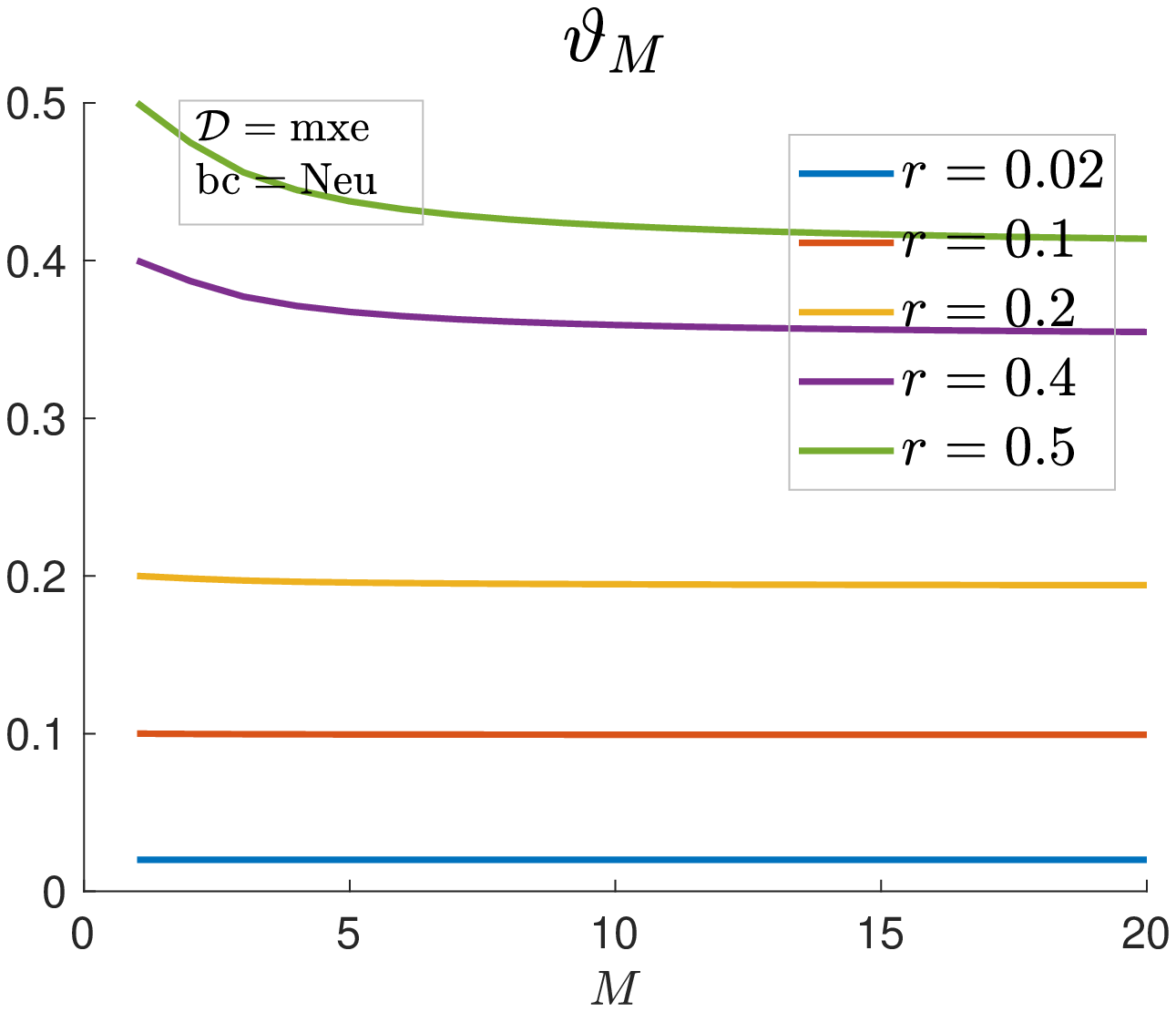,width=.325\linewidth,clip=}}
\caption{Smallest eigenvalue. Plot of analytical expressions}
\label{Fig:1Dsmallest_eigenvalue}
\end{figure}

Figure~\ref{Fig:1Dsmallest_eigenvalue.Neu} shows the smallest
eigenvalue~$\vartheta_M$ computed numerically for the Neumann case for~$\DD={\rm mxe}$ and~$\DD={\rm con}$. Recall that for the Neumann case the analytical expression
for~$\vartheta_M$ is known only for the case~$\DD={\rm mxe}$.
\begin{figure}[ht]
\centering
\subfigure
{\epsfig{file=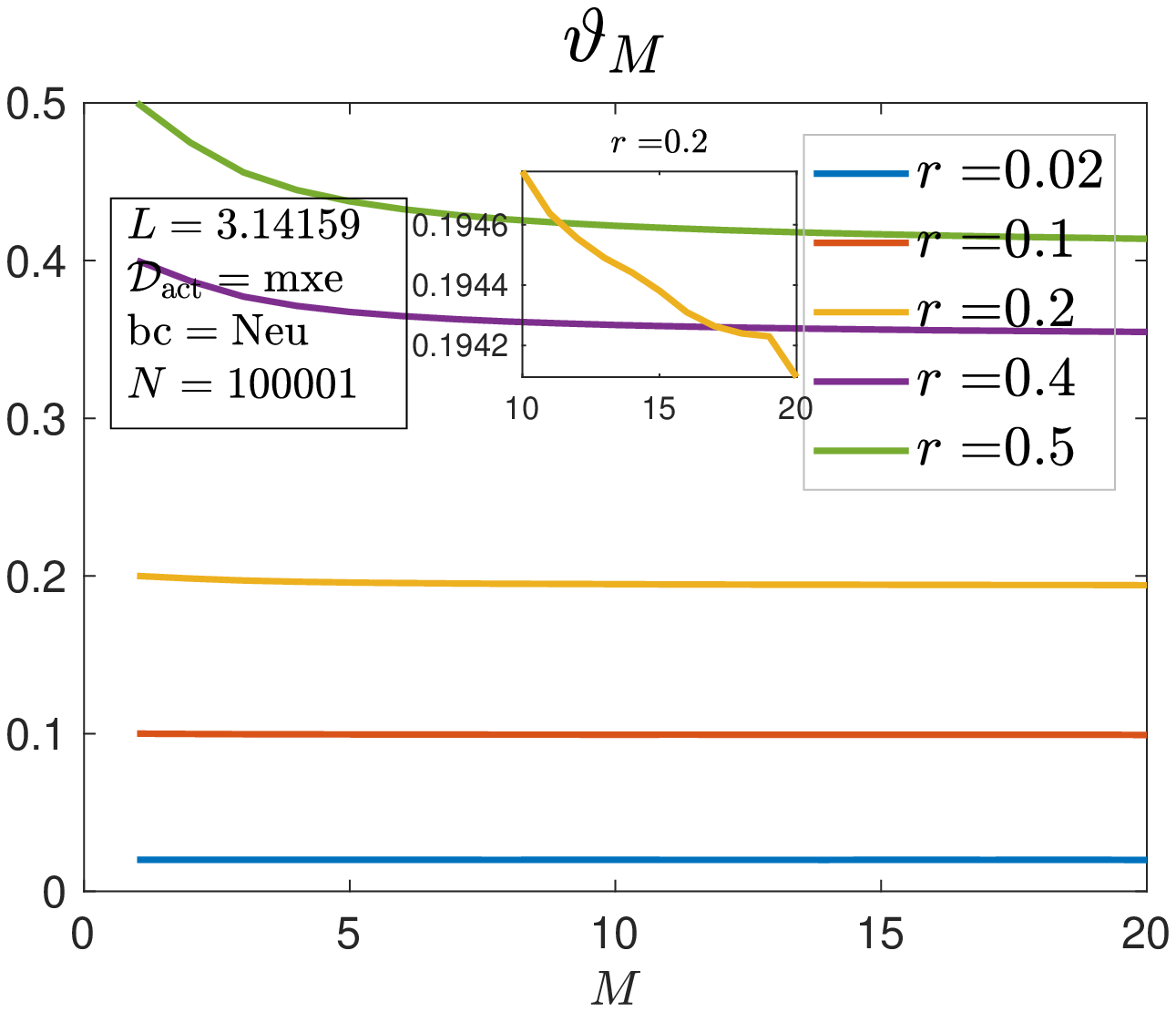,width=.325\linewidth,clip=}}
\subfigure
{\epsfig{file=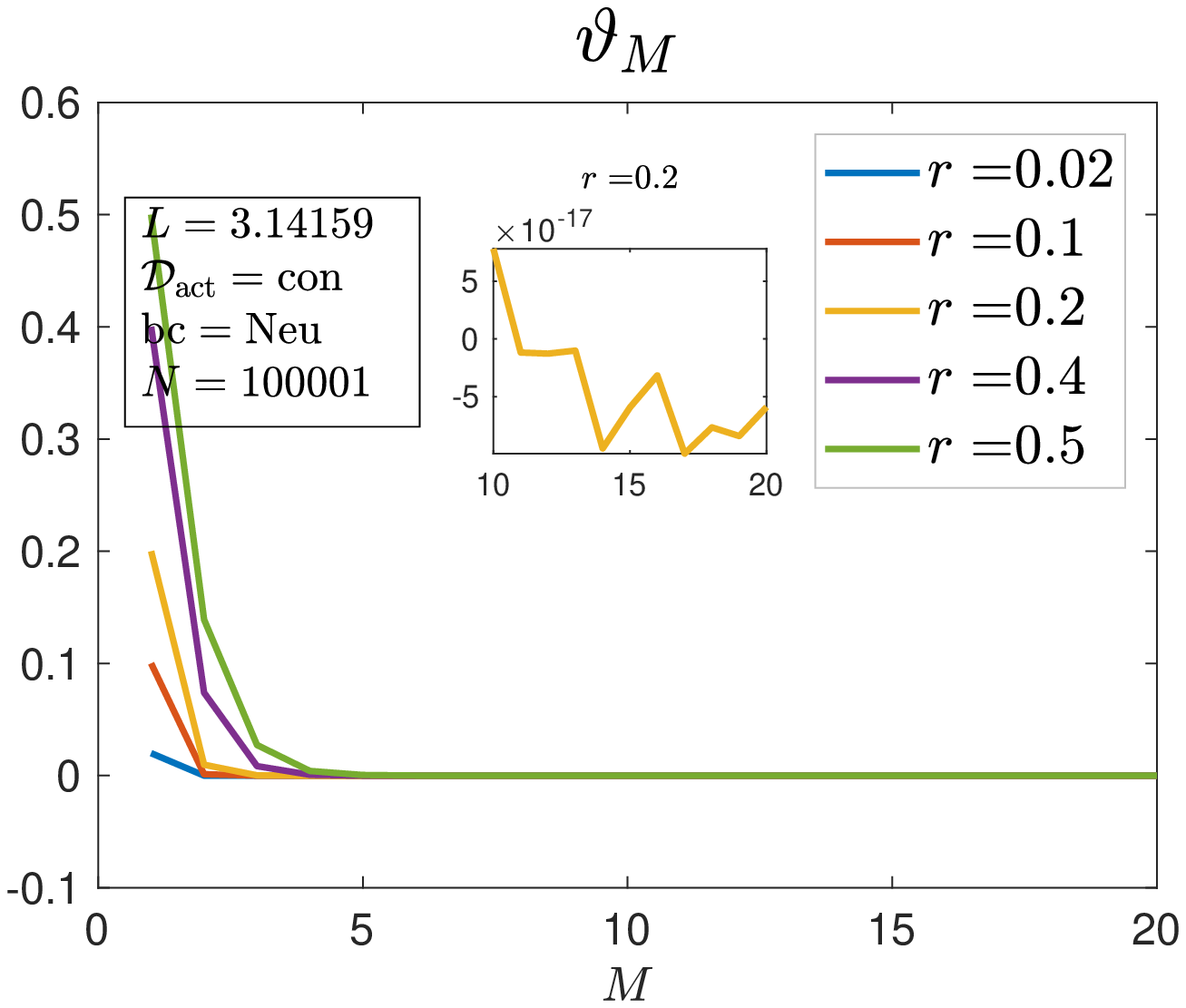,width=.325\linewidth,clip=}}
\caption{Numerical results. Neumann case.}
\label{Fig:1Dsmallest_eigenvalue.Neu}
\end{figure}

\begin{figure}[ht]
\centering
\subfigure
{\epsfig{file=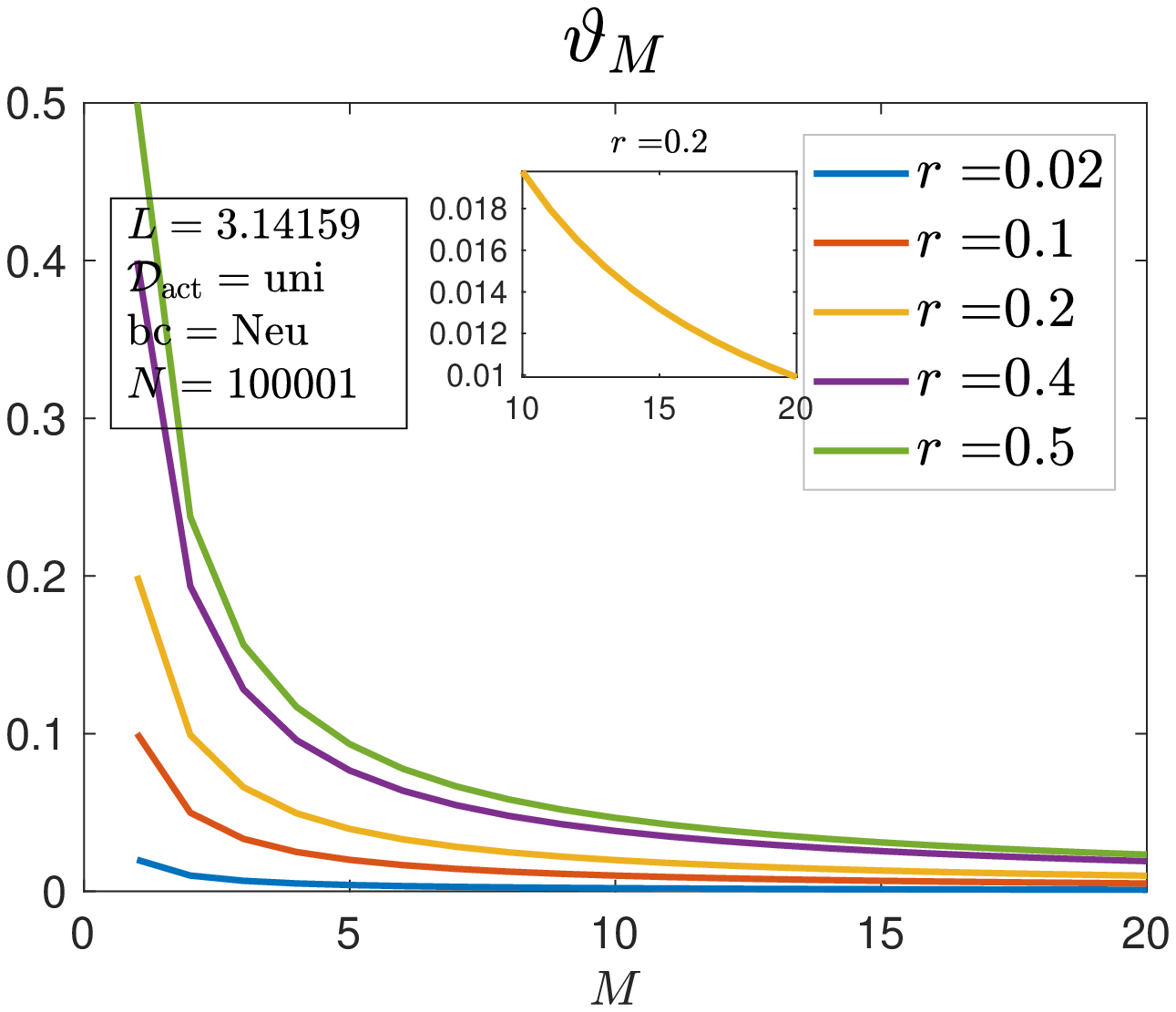,width=.325\linewidth,clip=}}
\subfigure
{\epsfig{file=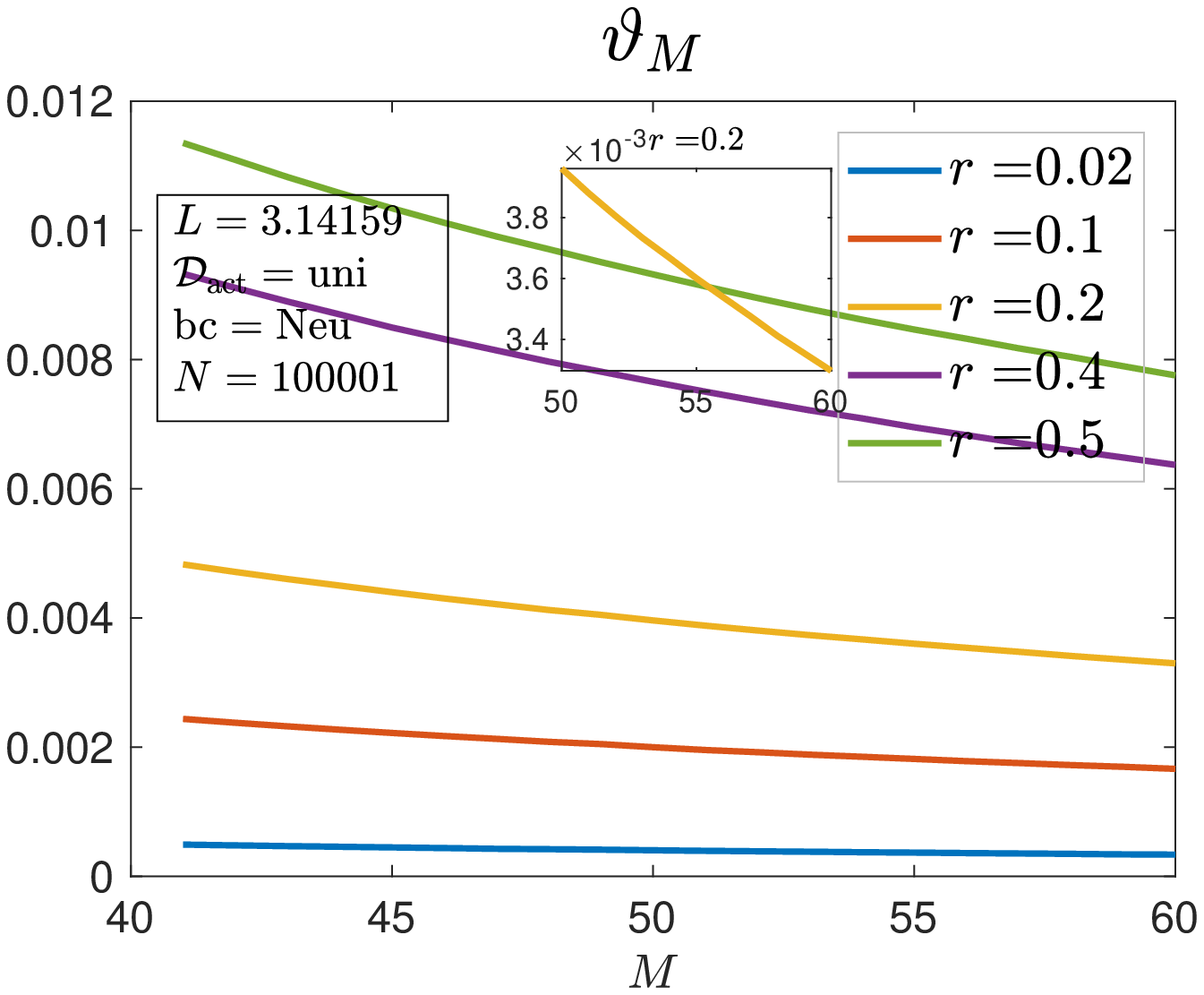,width=.325\linewidth,clip=}}
\subfigure
{\epsfig{file=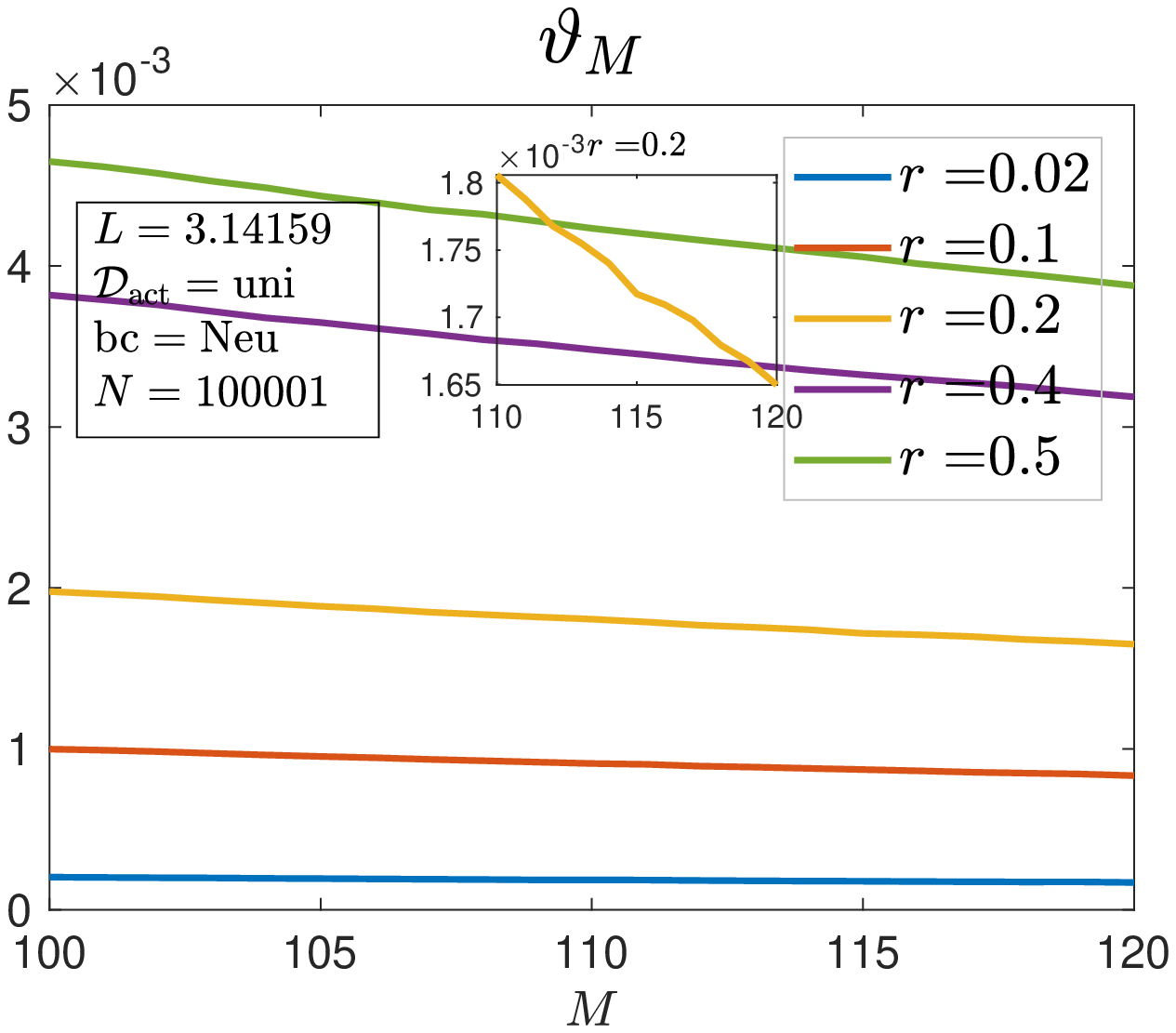,width=.325\linewidth,clip=}}
\caption{Numerical results. Neumann case.}
\label{Fig:1Dsmallest_eigenvalue.Neu-uni}
\end{figure}

In Figure~\ref{Fig:1Dsmallest_eigenvalue.Neu-uni}, we plot the  smallest
eigenvalue~$\vartheta_M$ computed numerically for the Neumann case for~$\DD={\rm uni}$.

From Figure~\ref{Fig:1Dsmallest_eigenvalue.Neu-uni} it is not totally clear whether the eigenvalue will remain away from zero as~$M$ increases.
Indeed, for $r=0.2$ roughly speaking we can see that:
\begin{itemize}
 \item for~$M\in[10,20]$ we have~$\frac{\ed\vartheta_M}{\ed M}\approx\frac{\vartheta_{20}-\vartheta_{10}}{20-10}\approx-10^{-3}$,
 \item for~$M\in[50,60]$ we have~$\frac{\ed\vartheta_M}{\ed M}\approx\frac{\vartheta_{60}-\vartheta_{50}}{60-50}\approx-6\cdot10^{-5}$,
 \item for~$M\in[110,120]$ we have~$\frac{\ed\vartheta_M}{\ed M}\approx\frac{\vartheta_{120}-\vartheta_{110}}{120-110}\approx-1.5\cdot10^{-5}$,
\end{itemize}
from which we see that~$\frac{\ed\vartheta_M}{\ed M}$ is increasing, but it also seems that it increases too slowly.  

In any case it is clear that,
in the Neumann case,
the eigenvalue~$\vartheta_M$ presents a remarkably different behaviour,
for the locations~$\DD={\rm mxe}$ and~$\DD={\rm uni}$. Even if for~$\DD={\rm uni}$ the eigenvalue remains bounded
away from zero as~$M$ increases, it is clear that its minimum is considerably smaller.

Recall that
in the Dirichlet case the behaviours for those locations are quite close
and with the same limit as~$M$ increases.

This also shows that the best location of the actuators, maximising~$\vartheta_M$ for a given~$M$, is not a trivial problem,
likely depending strongly
on the boundary conditions. 

Note that, recalling~\eqref{eq:proj_eigenval}, maximising~$\vartheta_M=\vartheta(c^M)$ is equivalent to minimise the norm of
the oblique projection~$P_{U_M(c^M)}^{E_M^\perp}$, which can lead to a better performance of the stabilising feedback control~$\Ck(t)$ in~\eqref{FeedKy}, and
to guarantee that the sufficient stabilisability condition in~\cite[Section~3.1]{KunRod-pp17} is satisfied for a smaller~$M$.

That is, it would be important to know the best location of the actuators for a given~$M$. This is an optimisation problem which will
require different tools, and so will be addressed in a separate work~\cite{RodSturm-ow18}. 
\section{Additional remarks on the oblique projection based feedback}\label{S:remOPorjFeed}
We first illustrate that oblique projections are substantially different from their orthogonal counterpart.
In Figure~\ref{Fig:proj_example} we see the orthogonal and oblique (along $E_M^\perp$) projections of the function $f(x) = 1_{(0,\frac{1}{2})}(x-1)(x-2)(x-3)$ 
onto the span~$U_M$ of~$6$ actuators distributed as in~\eqref{Dmxe},
 and the total actuator volume is $r\pi$  with $r=0.1$.

\begin{figure}[ht]
\subfigure
{\epsfig{file=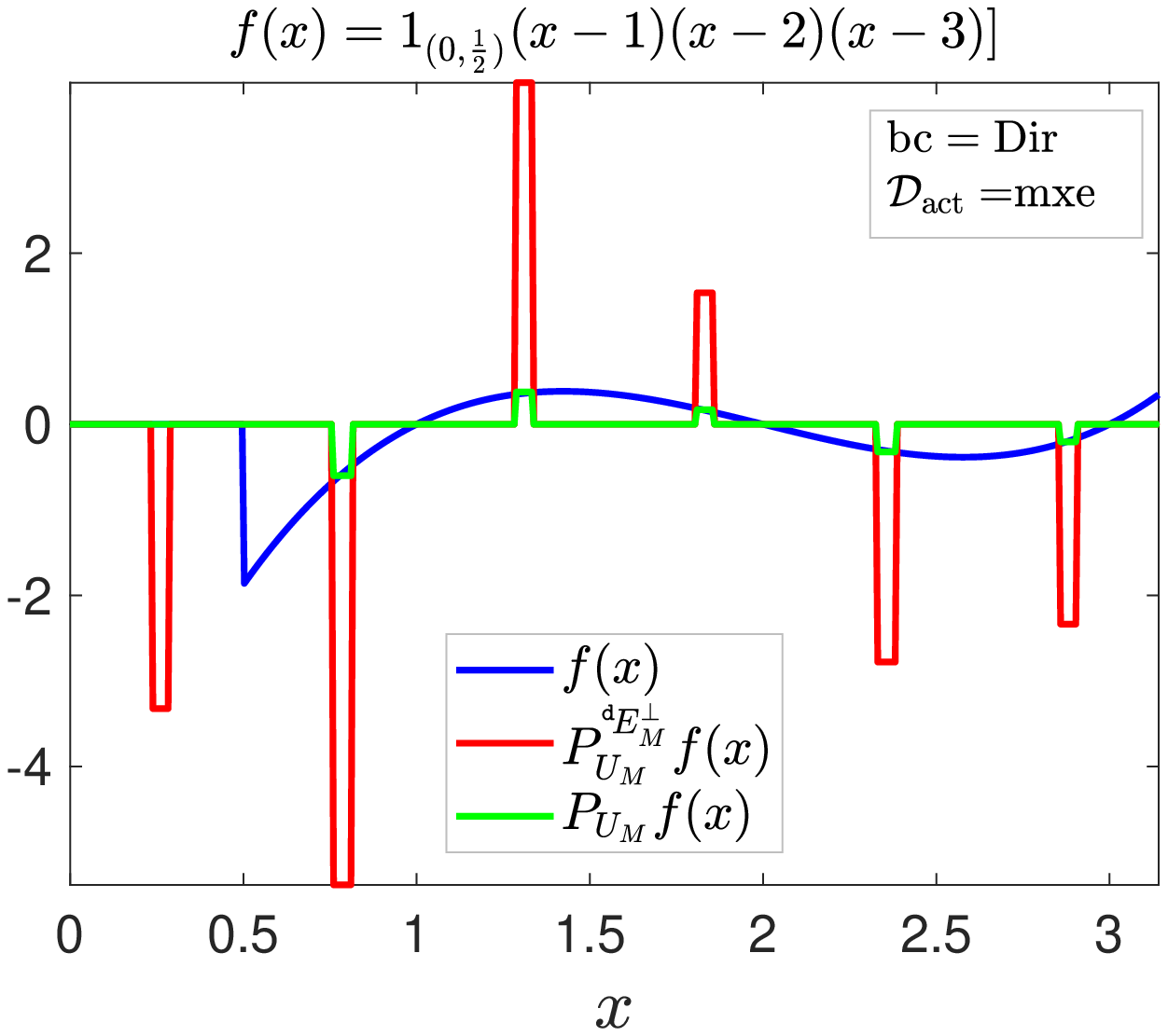,width=.325\linewidth,clip=}}%
\qquad
\subfigure
{\epsfig{file=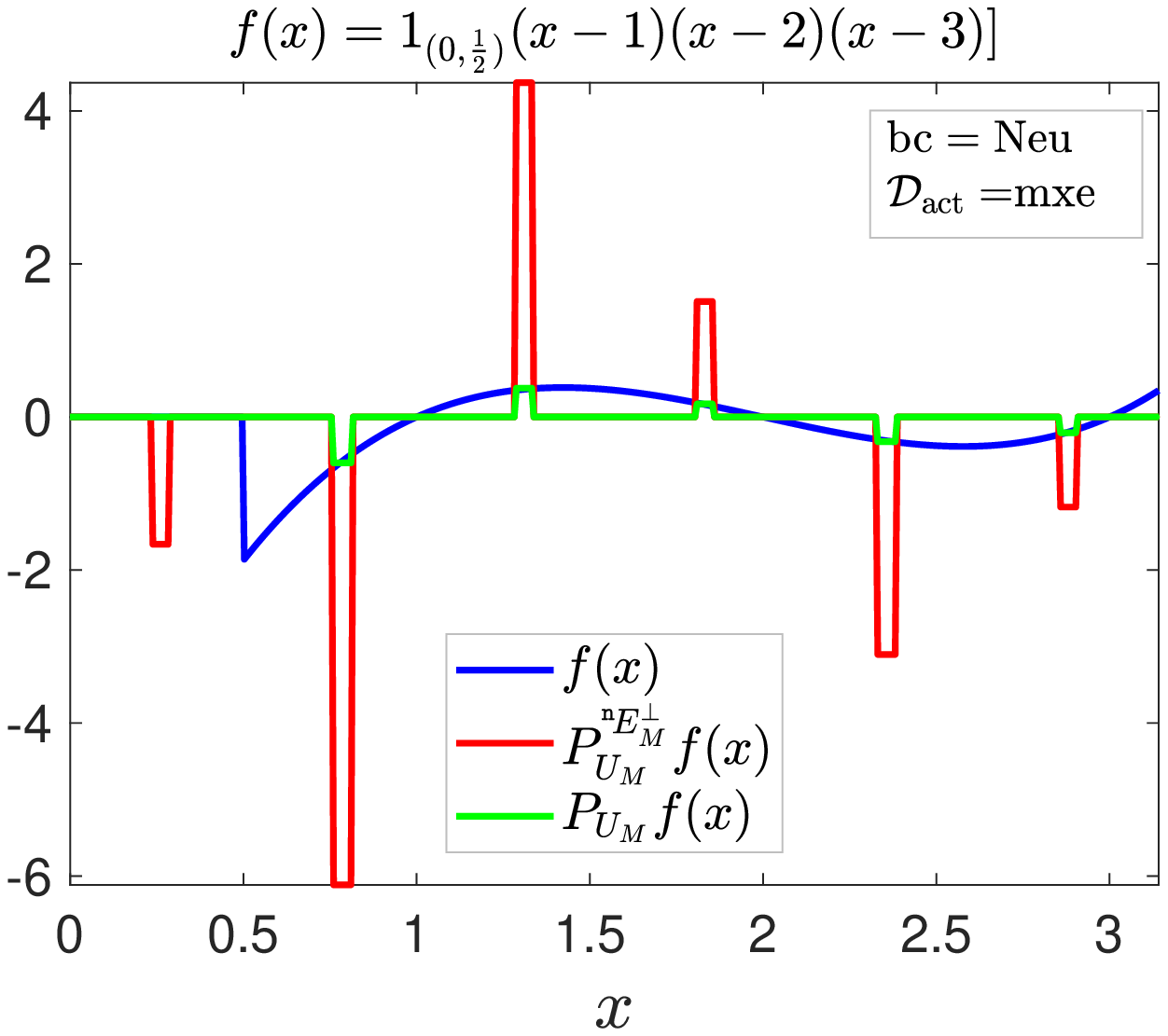,width=.325\linewidth,clip=}}%
\caption{Oblique and orthogonal projections onto~$U_M$.}
\label{Fig:proj_example}
\end{figure}

We recall that in the procedure in~\cite{PhanRod18-mcss,BreKunRod17,KroRod-ecc15}, in order to prove the existence of an open-loop stabilising control,
the orthogonal projection onto the span~$U_M$ of actuators
has been used in order to approximate a suitable
infinite-dimensional control~$\eta$, driving the system to zero in a finite interval,
together with a concatenating argument. That is, by taking~$M$ larger we have that~$P_{U_M}\eta$ is closer to~$\eta$, and a continuity argument is used.
Then, finally a Riccati based feedback is
found by solving a suitable differential Riccati equation.

Instead, in here the feedback is explicit and the idea behind the oblique projection based feedback~\eqref{FeedKy}
is not based on an approximation reason.
That is, the purpose of taking~$P_ {U_M}^{E_M^\perp}\left(-\Delta z +a(t)z-\lambda z\right)$ is not to approximate
$-\Delta z +a(t)z-\lambda z$. Actually, we can see that since the total volume of the actuators is fixed,
then taking~$M$ larger does not necessarily imply neither that~$P_ {U_M}^{E_M^\perp}f$ is closer to~$f$ nor that~$P_{U_M}f$ is closer to~$f$.
As an illustration, let us consider the case where~$f(x)=f_0$ is a constant function, $x\in(0,\pi)$. Then, we can see that
$P_{U_M}f=\begin{cases}
           f_0&\mbox{if }x\in\bigcup_{i=1}^M\omega_i,\\
           0&\mbox{if }x\notin\bigcup_{i=1}^M\omega_i,
          \end{cases}
$
Therefore, if~$r\pi=\int_{\bigcup_{i=1}^M\omega_i}1\,\ed x$ is the total volume of
the actuators~$\{1_{\omega_1},1_{\omega_2},\dots,1_{\omega_M}\}$, then
we obtain~$\norm{f-P_{U_M}f}{L^2}^2=\int_{(0,\pi)\setminus\bigcup_{i=1}^M\omega_i}\norm{f_0}{}^2\,\ed x=\norm{f_0}{}^2(1-r)\pi$.
This means that~$P_{U_M}f$ is not approximating~$f$, since the
distance to~$f$, that is~$\norm{f-P_{U_M}f}{L^2}=\norm{f_0}{}\sqrt{(1-r)\pi}$, remains the same as~$M$ increases.
Finally, recalling Remark~\ref{R:orthprojoptim}, we also have that~$\norm{f-P_{U_M}^{E_M^\perp}f}{L^2}\ge\norm{f-P_{U_M}f}{L^2}=\norm{f_0}{}\sqrt{(1-r)\pi}$,
that is, $P_{U_M}^{E_M^\perp}f$ is also not approximating~$f$, as~$M$ increases.

\section{Numerical simulations for the closed-loop system}\label{S:numeric}
Here we present some simulations for the oblique projection based feedback systems~\eqref{sys-y-parDir} and~\eqref{sys-y-parNeu}, under Dirichlet
and Neumann boundary conditions, respectively.

\subsection{Discretization}
We follow a finite element discretization of our system based on the standard piecewise linear hat functions. 
Let~$N\ge2$ be a positive integer and let~$\Omega_D=(0h,1h,2h,\dots,(N-1)h)$, $h=\pi/(N-1)$, be a discretization of the space interval~$[0,\pi]$.
We briefly recall that the so called  hat  functions are explicitly given by
\begin{align*}
 h_{i+1}(x)&=\begin{cases}
       1-i+\frac{x}{h}&\mbox{if }x\in[(i-1)h,ih],\\
       1+i-\frac{x}{h}&\mbox{if }x\in[h,(i+1)h],\\
       0&\mbox{if }x\notin[(i-i)h,(i+1)h],
      \end{cases}\qquad\mbox{for the interior points }i\in\{1,\dots,N-2\}.
      \intertext{Instead for the boundary points,}
 h_1(x)&=\begin{cases}
       1+i-\frac{x}{h}&\mbox{if }x\in[0,h],\\
       0&\mbox{if }x\notin[0,h],
      \end{cases} \qquad \mbox{and}\qquad
      h_{N}(x)=\begin{cases}
      1-i+\frac{x}{h}&\mbox{if }x\in[(N-2)h,\pi],\\
      0&\mbox{if }x\notin[(N-2)h,\pi].
      \end{cases}  
\end{align*}

The mass and stiffness matrices are defined as matrices in~$\R^{N\times N}$ by
\[
\Ma\coloneqq[(h_n,h_m)_{L^2}]\qquad\mbox{and}\qquad
\St\coloneqq[(\nabla h_n,\nabla h_m)_{L^2}].
\]
For given functions~$z$ and~$w$ in~$H^1\coloneqq H^1((0,\pi))$ we have the approximations, for both Dirichlet and Neumann homogeneous boundary conditions,
\begin{align}
 \langle z,w\rangle_{V',V}&=(z,w)_H\approx \mathbf w^\top \Ma \mathbf z,\label{discI}\\
 \langle-\fractx{\ed}{\ed x}\fractx{\ed}{\ed x} z, w\rangle_{V',V}
 &=( \fractx{\ed}{\ed x} z,\fractx{\ed}{\ed x} w)_H-\fractx{\ed}{\ed x} z(\pi)w(\pi)+\fractx{\ed}{\ed x} z(0)w(0)\approx\mathbf w^\top\St\mathbf z\label{discLap}
\end{align}
where~$\mathbf z= [\mathbf z_i]\coloneqq[z((i-1)h)]$
and~$\mathbf w= [\mathbf w_i]\coloneqq [w((i-1)h)]$ are the vectors, in~$\R^{N\times 1}$,
containing the (ordered) values of~$z$ and~$w$ at the mesh points in~$\Omega_D$. We have  $z\approx\sum\limits_{m=1}^{N}\mathbf z_m h_{m}(x)$
and~$w\approx\sum\limits_{n=1}^{N}\mathbf z_n h_{n}(x)$. 

\subsubsection*{The reaction term}
The approximation of the  reaction term (for fixed time~$t$) is taken as follows, 
\begin{equation}\label{discR}
 \langle a(t)z,w\rangle_{V',V}=( a(t)z,w)_{L^2}=( z,a(t)w)_{L^2}
 \approx\mathbf w^\top\RRR(t)\mathbf z,\quad\mbox{with}\quad\RRR(t)\coloneqq\fractx{\Ma[{\rm Diag}(\mathbf a(t))]+[{\rm Diag}(\mathbf a(t))]\Ma}{2}.
\end{equation}
Here~$[{\rm Diag}(\mathbf a(t))]$ stands for the diagonal matrix whose diagonal elements
are given by the entries of~$\mathbf a(t)$, that is,~$[{\rm Diag}(\mathbf a(t))]_{(i,i)}\coloneqq\mathbf a_i(t)=[a(t)((i-1)h)]$.

Note that with~$\mathbf p=[{\rm Diag}(\mathbf a(t))]\mathbf z$, we have
that~$a(t)z\approx\sum\limits_{i=1}^{N} \mathbf a_i(t)\mathbf z_i h_{i}(x)=\sum\limits_{i=1}^{N}\mathbf p_i h_{i}(x)$.

\subsubsection*{The feedback operator}
For the discretization of the oblique projection we will follow~\eqref{eq:proj_form}.
We recall that $P_{ U_M}^{E_M^\perp}=P_{ U_M}^{E_M^\perp}P_{E_M}$.
Analogously,  we also denote the eigenvalues and eigenfunctions (either in~\eqref{eigs-Dir} or in~\eqref{eigs-Neu}) by
$\alpha_i$ and~$ e_i$, and we denote the ordered sets~$\Ce_M=( e_1, e_1,\dots, e_M)$, and take
$\Cu_M=(1_{\omega_1},1_{\omega_2}, \dots,1_{\omega_M})$. That is, $ E_M=\linspan\{ e_1, e_1,\dots, e_M\}$ and
$U_M=\linspan\{1_{\omega_1},1_{\omega_2}, \dots,1_{\omega_M}\}$.
Notice that~\eqref{eq:proj_form} does not require neither the basis~$\Cu_M$ nor the basis~$\Ce_M$
to be orthonormal.

Following~\eqref{eq:proj_form}
we have that
\[
 P_{ U_M}^{ E_M^\perp}z(x)=\sum\limits_{i=1}^{M}\, p_i 1_{\omega_i}(x),\quad\mbox{with}\quad 
 p\coloneqq([(\Ce_M,\Cu_M)_{L^2}])^{-1}[(\Ce_M,z)_{L^2}].
\]

Now, let us denote the matrices in~$\R^{(N+1)\times M}$
\[
 [\Ce_M]=\begin{bmatrix}
          {\mathbf e}_1&{\mathbf e}_2&\dots&{\mathbf e}_M
         \end{bmatrix}\qquad\mbox{and}\qquad
         [\Cu_M]=\begin{bmatrix}
          \mathbf {1}_{\omega_1}&\mathbf {1}_{\omega_2}&\dots&\mathbf {1}_{\omega_M}
         \end{bmatrix}
\]
whose columns are the vectors of the evaluations at the mesh points in~$\Omega_D$. With this notations we can see that
\[
p\approx  [{\mathbf e}_i^\top\Ma\mathbf 1_{\omega_j}]^{-1}[{\mathbf e}_j^\top\Ma z]
=[{\mathbf e}_i^\top\Ma \mathbf  1_{\omega_j}]^{-1}[\Ce_M]^\top\Ma\mathbf z. 
\]
Note that~$[{\mathbf e}_i^\top\Ma\mathbf 1_{\omega_j}]\in\R^{M\times M}$ is a relatively small matrix (if the number of actuators is small).
In the simulations the inverse~$[{\mathbf e}_i^\top\Ma\mathbf 1_{\omega_j}]\in\R^{M\times M}$ was computed a priori, and the matrix
\[
 \PPP_M=[{\mathbf e}_i^\top\Ma\mathbf 1_{\omega_j}]^{-1}[\Ce_M]^\top
\]
was saved. In this way, we  compute 
\[
p\approx q\coloneqq \PPP_M\Ma\mathbf z,\qquad
P_{ U_M}^{ E_M^\perp}z\approx\sum\limits_{i=1}^{M}\,q_i 1_{\omega_i}.
 \]
That is,
\begin{equation}\label{approxP}
\mbox{with}\quad v=P_{ U_M}^{ E_M^\perp}z,\qquad\mathbf v= [\mathbf v_i]\coloneqq[v((i-1)h)]\approx[\Cu_M]\PPP_M\Ma\mathbf z.
\end{equation}
Therefore, by using~\eqref{discI},
\begin{equation}\label{discP.I}
 \langle P_{ U_M}^{ E_M^\perp}z,w\rangle_{V',V}\approx \mathbf w^\top\Ma[\Cu_M]\PPP_M\Ma\mathbf z
\end{equation}

To complete the discretization of the feedback~$\Ck(t)$ in~\eqref{FeedKy} it remains to
discretize~$\langle P_{ U_M}^{ E_M^\perp}a(t)z,w\rangle_{V',V}$ and~$\langle P_{ U_M}^{ E_M^\perp}\Delta z,w\rangle_{V',V}$.

We will use Lemma~\ref{L:adjProj}.
Observe that
\begin{align*}
 \langle P_{ U_M}^{ E_M^\perp}a(t)z,w\rangle_{V',V}&=( P_{ U_M}^{ E_M^\perp}a(t)z,w)_H=\langle a(t)z,P_{  E_M}^{U_M^\perp}w\rangle_{V,V'}\\
 \langle P_{ U_M}^{ E_M^\perp}(-\Delta) z,w\rangle_{V',V}&=-( P_{ U_M}^{ E_M^\perp}P_{  E_M}\Delta z,w)_{H}
 =-\langle \Delta z,P_{  E_M}^{U_M^\perp}w\rangle_{V',V},
\end{align*}
and using~\eqref{discR}, with $h=P_{ E_M }^{U_M^\perp}w$, and the analogous to~\eqref{approxP}
\[
\mathbf h= [\mathbf h_i]\coloneqq[h((i-1)h)]\approx[\Ce_M]\widehat\PPP_M\Ma\mathbf w,\qquad
\widehat\PPP_M\coloneqq[\mathbf 1_{\omega_i}^\top\Ma{\mathbf e}_j]^{-1}[\Cu_M]^\top,
\]
we arrive to
\begin{align*}
\langle P_{ U_M}^{ E_M^\perp}a(t)z,w\rangle_{V',V}&=\langle a(t)z,h\rangle_{V,V'}
\approx \mathbf h^\top\RRR(t) \mathbf z=\mathbf w^\top\Ma(\widehat\PPP_M)^\top[\Ce_M]^\top\RRR(t) \mathbf z\\
&=\mathbf w^\top\Ma[\Cu_M][{\mathbf e}_i^\top\Ma\mathbf 1_{\omega_j}]^{-1}[\Ce_M]^\top\RRR(t) \mathbf z.
\end{align*}
Proceeding similarly for~$\langle P_{ U_M}^{ E_M^\perp}\Delta z,w\rangle_{V',V}$ we arrive to
\begin{align}
\langle P_{ U_M}^{ E_M^\perp}a(t)z,w\rangle_{V',V}&
=\mathbf w^\top\Ma[\Cu_M]\PPP_M\RRR(t) \mathbf z,\label{discP.R}\\
\langle P_{ U_M}^{ E_M^\perp}(-\Delta) z,w\rangle_{V',V}&
=\mathbf w^\top\Ma[\Cu_M]\PPP_M\St \mathbf z.\label{discP.Lap}
\end{align}

Therefore the dicretization of our feedback operator, as in~\eqref{FeedKy}, is taken as
\begin{equation}\label{discK}
\langle P_ {U_M}^{E_M^\perp}\left(-\Delta z +a(t)z-\lambda z\right) ,w\rangle_{V,V'}\approx
\mathbf w^\top\KKK(t)\mathbf z,\qquad \KKK(t)\coloneqq \Ma[\Cu_M]\PPP_M\left(\St+\RRR(t)-\lambda\Ma\right). 
\end{equation}

\subsubsection*{The closed-loop system under Dirichlet boundary conditions}
Though we restrict ourselves to homogeneous (zero) boundary conditions
we include  here the case of nonhomogeneous conditions.

Observe that the space discretization of the closed loop system reads
\begin{equation}\label{discSys}
 \Ma\fractx{\ed}{\ed t}\mathbf y=-\nu\St\mathbf y-\RRR(t)\mathbf y-\Ma[\Cu_M]\PPP_M\left(-\nu\St-\RRR(t)+\lambda\Ma\right)\mathbf y.
\end{equation}
From now we will consider the reaction term and the feedback as  external forces
\[
\mathbf h=\mathbf h(\mathbf y)\coloneqq\mathbf r+\Ma\mathbf f,\qquad \mathbf r\coloneqq-\RRR(t)\mathbf y,\qquad \mathbf f\coloneqq-[\Cu_M]\PPP_M\left(-\nu\St-\RRR(t)+\lambda\Ma\right)\mathbf y.
\]
Now we recall how we can solve~\eqref{discSys} with a general external forcing~$\mathbf h$.

Recall that the dynamics in system~\eqref{sys-y-parDir} has to be seen in~$L^2((0,T),V')$, that is,
for Dirichlet boundary conditions, we need to take test functions~$w$
taking values in~$V=H^1_0((0,\pi))$. That is,
we have to test~\eqref{discSys} with vectors~$\mathbf w$ with~$w(1)=0=w(N)$.

Up to a permutation, the mesh points in~$\Omega_D$ can be reorganized so that the boundary points appear at last as,
$\Pi\Omega_D=\Omega_D^\circ=(1h,2h,\dots,(N-3)h,(N-2)h,0h,(N-1)h)$. We also write the solution~$\mathbf y$ in these new coordinates
$\mathbf y^\circ\coloneqq\Pi\mathbf y
=\begin{bmatrix}\mathbf y^\circ_{\rm i}&\mathbf y^\circ_{\rm b}\end{bmatrix}^\top$
where~$\mathbf y^\circ_{\rm i}\coloneqq\begin{bmatrix}\mathbf y_2&
\mathbf y_3&\dots&\mathbf y_{N-2}&\mathbf y_{N-1}\end{bmatrix}^\top$
and~$\mathbf y^\circ_{\rm b}\coloneqq\begin{bmatrix}\mathbf y_1& \mathbf y_{N}\end{bmatrix}^\top$,
stand for the interior and boundary coordinates of~$\mathbf y^\circ$, respectively. Similarly we rewrite the external
forcing~$\mathbf h^\circ\coloneqq\Pi\mathbf h
=\begin{bmatrix}\mathbf h^\circ_{\rm i}&\mathbf h^\circ_{\rm b}\end{bmatrix}^\top$. In this way we arrive at
\begin{equation}\label{discSysNeuPif}
 \Ma^\circ \fractx{\ed}{\ed t}\mathbf y^\circ=-\nu\St^\circ\mathbf y^\circ+\mathbf h^\circ
\end{equation}
where, recalling that the inverse of a permutation coincides with the transpose,
\[
\Ma^\circ\coloneqq\Pi\Ma\Pi^\top,\qquad  \St^\circ\coloneqq\Pi\St\Pi^\top.
\]

Now rewriting the last matrices in blocks
\[
 \Ma^\circ\eqqcolon\begin{bmatrix}\Ma^\circ_{\rm ii}&\Ma^\circ_{\rm ib}\\
 \Ma^\circ_{\rm bi}&\Ma^\circ_{\rm bb}\end{bmatrix},\qquad
 \St^\circ\eqqcolon\begin{bmatrix}\St^\circ_{\rm ii}&\St^\circ_{\rm ib}\\
 \St^\circ_{\rm bi}&\St^\circ_{\rm bb}\end{bmatrix},
\]
so that the last columns/rows correspond to the boundary coordinates, we find after testing~\eqref{discSysNeuPif} with~$\mathbf w^\circ$,
\[
(\mathbf w^\circ_{\rm i})^\top\begin{bmatrix}\Ma^\circ_{\rm ii}&\Ma^\circ_{\rm ib}\end{bmatrix} \fractx{\ed}{\ed t}\mathbf y^\circ
=-\nu(\mathbf w^\circ_{\rm i})^\top\begin{bmatrix}\St^\circ_{\rm ii}&\St^\circ_{\rm ib}\end{bmatrix}\mathbf y^\circ
+(\mathbf w^\circ_{\rm i})^\top\mathbf  h^\circ_{\rm i} 
\]
because~$\mathbf w^\circ=\begin{bmatrix} \mathbf w^\circ_{\rm i}&\mathbf w^\circ_{\rm b}\end{bmatrix}^\top$ vanishes at boundary coordinates
(i.e., $\mathbf w^\circ_{\rm b}$ vanishes). Now  we obtain
\begin{align*}
\Ma^\circ_{\rm ii} \fractx{\ed}{\ed t}\mathbf y^\circ_{\rm i}
&=-\nu\St^\circ_{\rm ii}\mathbf y^\circ_{\rm i}+\mathbf  h^\circ_{\rm i}
  -\Ma^\circ_{\rm ib} \fractx{\ed}{\ed t}\mathbf y^\circ_{\rm b}-\nu\St^\circ_{\rm ib}\mathbf y^\circ_{\rm ib}.
 \end{align*}
Finally to solve the system above we can discretize the time interval~$[0,+\infty)_D\coloneqq[0k,1k,2k,\dots)$ and use a Crank-Nicolson scheme as follows.
Essentially, we consider the approximations
\begin{align*}
\fractx{\ed}{\ed t}f\rest{\frac{jk+(j-1)k}{2}}&\approx\fractx{f(jk)-f((j-1)k)}{k},\qquad
f(\fractx{jk+(j-1)k}{2})\approx\fractx{f((j-1)k)+(jk)}{2}.
\end{align*}
for a given (differentiable) function~$f$. This will lead us to, by
denoting~$\mathbf y^{j}\coloneqq \mathbf y((j-1)k)$,
 \begin{align}\label{fulldisc}
\left(2\Ma^\circ_{\rm ii} +k\nu\St^{\circ}_{\rm ii}\right)\mathbf y^{j,\circ}_{\rm i}
&=\left(2\Ma^\circ_{\rm ii} -k\nu\St^\circ_{\rm ii}\right)\mathbf y^{j-1,\circ}_{\rm i}
+k\left((\mathbf  h(\mathbf y^{j-1}))_{\rm i}^\circ+(\mathbf  h(\mathbf y^{j}))_{\rm i}^\circ\right)\notag\\
&\quad  -(2\Ma^\circ_{\rm ib} +k\nu\St^\circ_{\rm ib})\mathbf y^{j,\circ}_{\rm b}
+(2\Ma^\circ_{\rm ib}-k\nu\St^\circ_{\rm ib})\mathbf y^{j-1,\circ}_{\rm b}
 \end{align}
We assume that we know~$\mathbf y^{j-1,\circ}_{\rm i}$ and we want to know~$\mathbf y^{j,\circ}_{\rm i}$.
Therefore, all terms on the right hand side are known data, with the exception of~$(\mathbf  h(\mathbf y^{j}))_{\rm i}^\circ$.
So instead of solving~\eqref{fulldisc}, we will solve the similar system where~$(\mathbf  h(\mathbf y^{j}))_{\rm i}^\circ$
is replaced by a suitable approximation~$\mathbf h_{\rm ext}^j$ as follows.
\begin{itemize}
 \item we know $\mathbf y^{1}=\mathbf y_0$, because~$y(0)=y_0$ is given in~\eqref{sys-y-parDir}.
 Then, we can compute $\mathbf  h(\mathbf y^{1})$.
 \item we also set a ``ghost'' point~$\mathbf  h(\mathbf y^0)\coloneqq\mathbf  h(\mathbf y^{1})$.
 \item for~$j\ge 2$, we define~$\mathbf h_{\rm ext}^j$ as the linear extrapolation ~$\mathbf h_{\rm ext}^j
 \coloneqq2\mathbf  h(\mathbf y^{j-1})-\mathbf  h(\mathbf y^{j-2})$.
 \end{itemize}
 That is we solve the system
 \begin{align}\label{fulldisc-ext}
\left(2\Ma^\circ_{\rm ii} +k\nu\St^{\circ}_{\rm ii}\right)\mathbf y^{\circ,j}_{\rm i}
&=\left(2\Ma^\circ_{\rm ii} -k\nu\St^\circ_{\rm ii}\right)\mathbf y^{\circ,j-1}_{\rm i}
+k\left(3(\mathbf  h(\mathbf y^{j-1}))_{\rm i}^\circ-(\mathbf  h(\mathbf y^{j-2}))_{\rm i}^\circ\right)\notag\\
&\quad  -(2\Ma^\circ_{\rm ib} +k\nu\St^\circ_{\rm ib})\mathbf y^{\circ,j}_{\rm b}
+(2\Ma^\circ_{\rm ib}-k\nu\St^\circ_{\rm ib})\mathbf y^{\circ,j-1}_{\rm b}
 \end{align}
Notice that once we have~$\mathbf y^{j-1}$ and~$\mathbf h(\mathbf y^{j-2})$, then we can find~$\mathbf y^{j,\circ}_{\rm i}$ from~\eqref{fulldisc-ext}
because the matrix
~$\left(2\Ma^\circ_{\rm ii} +k\nu\St^{\circ}_{\rm ii}\right)$ is symmetric and positive definite (thus, invertible). Finally, we can construct
~$\mathbf y^{j}=\Pi^\top\begin{bmatrix}\mathbf y^{j,\circ}_{\rm i}&\mathbf y^{j,\circ}_{\rm b}\end{bmatrix}^\top$.

\subsubsection*{The closed-loop system under Neumann boundary conditions}
The dynamics in system~\eqref{sys-y-parNeu} has to be seen in~$L^2((0,T),V')$, that is,
for Neumann boundary conditions, we need to take test functions~$w$
taking values in~$V=H^1(0,\pi)$. That is, now the test vectors do not necessarily satisfy
~$\mathbf w(1)=0=\mathbf w(N)$.

Observe, recalling~\eqref{discLap}, that the space discretization of the closed loop system reads
\begin{equation}\label{discSys.Neu}
 \Ma\fractx{\ed}{\ed t}\mathbf y=-\nu\St\mathbf y +\GGG(t) 
 -\RRR(t)\mathbf y-\Ma[\Cu_M]\PPP_M\left(-\nu\St-\RRR(t)+\lambda\Ma\right)\mathbf y,
\end{equation}
where~$\GGG(t)\in\R^{N\times 1}$ is the vector with all entries equal to zero with the exception the first and last
corresponding to the boundary coordinates
\[
\GGG(t)=\begin{bmatrix}
      g(1)(t)&0&0&\dots&0&0&-g(2)(t)
     \end{bmatrix}^\perp
\]
where $g$ is the vector of boundary data
\[
 g(1)(t)=\fractx{\ed}{\ed x}y(0,t)\quad\mbox{and}\quad g(2)(t)=\fractx{\ed}{\ed x}y(\pi,t).
\]

After the permutation of coordinates~$\Pi$, collecting the boundary coordinates at the end, and proceeding as in the Dirichlet case, we obtain
 \begin{align*}
\left(2\Ma^\circ +k\nu\St^{\circ}\right)\mathbf y^{\circ,j}
&=\left(2\Ma^\circ -k\nu\St^\circ\right)\mathbf y^{\circ,j-1}
+k\left(3(\mathbf  h(\mathbf y^{j-1}))^\circ-(\mathbf  h(\mathbf y^{j-2}))^\circ\right)\notag\\
 &\quad +k(\GGG^{\circ,j}+\GGG^{\circ,j-1})
 \end{align*}
with~$\GGG^{\circ,j}\coloneqq\GGG^{\circ}((j-1)k)$, $j\ge1$.

\subsection{Feedback performance}
Here we present some simulations for both Dirichlet and Neumann boundary conditions. 
We take
\begin{equation}\label{data_perf}
 L=\pi,\qquad \nu=0.1,\qquad\mbox{and}\qquad y_0(x)=0.1x.
\end{equation}
In Figure~\ref{Fig:perf_cr} we see the performance in the simple case of the constant reaction
\begin{equation}\label{data_cr}
a(x,t)=-35\nu. 
\end{equation}

\begin{figure}[ht]
\subfigure
{\epsfig{file=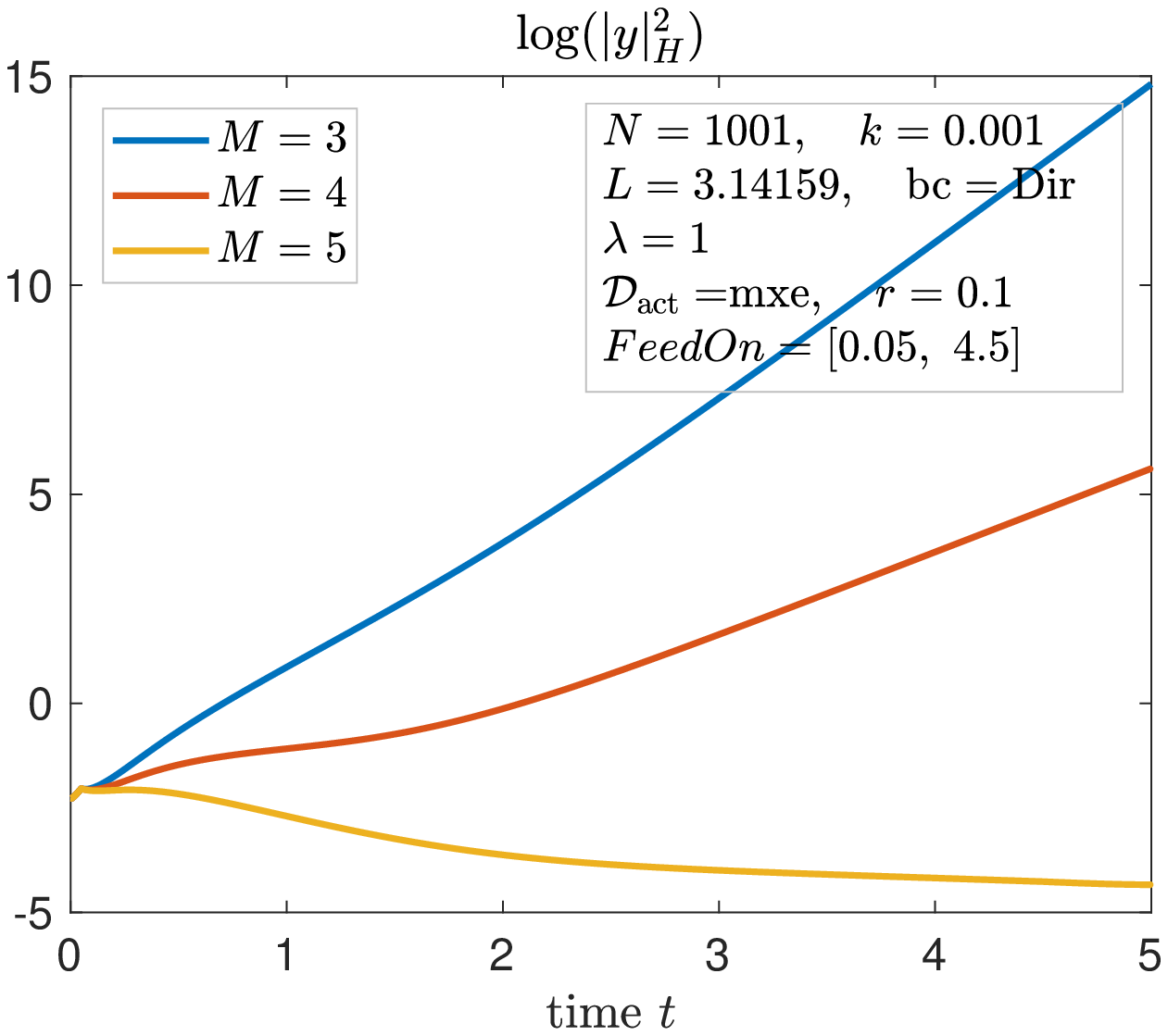,width=.325\linewidth,clip=}}%
\qquad
\subfigure
{\epsfig{file=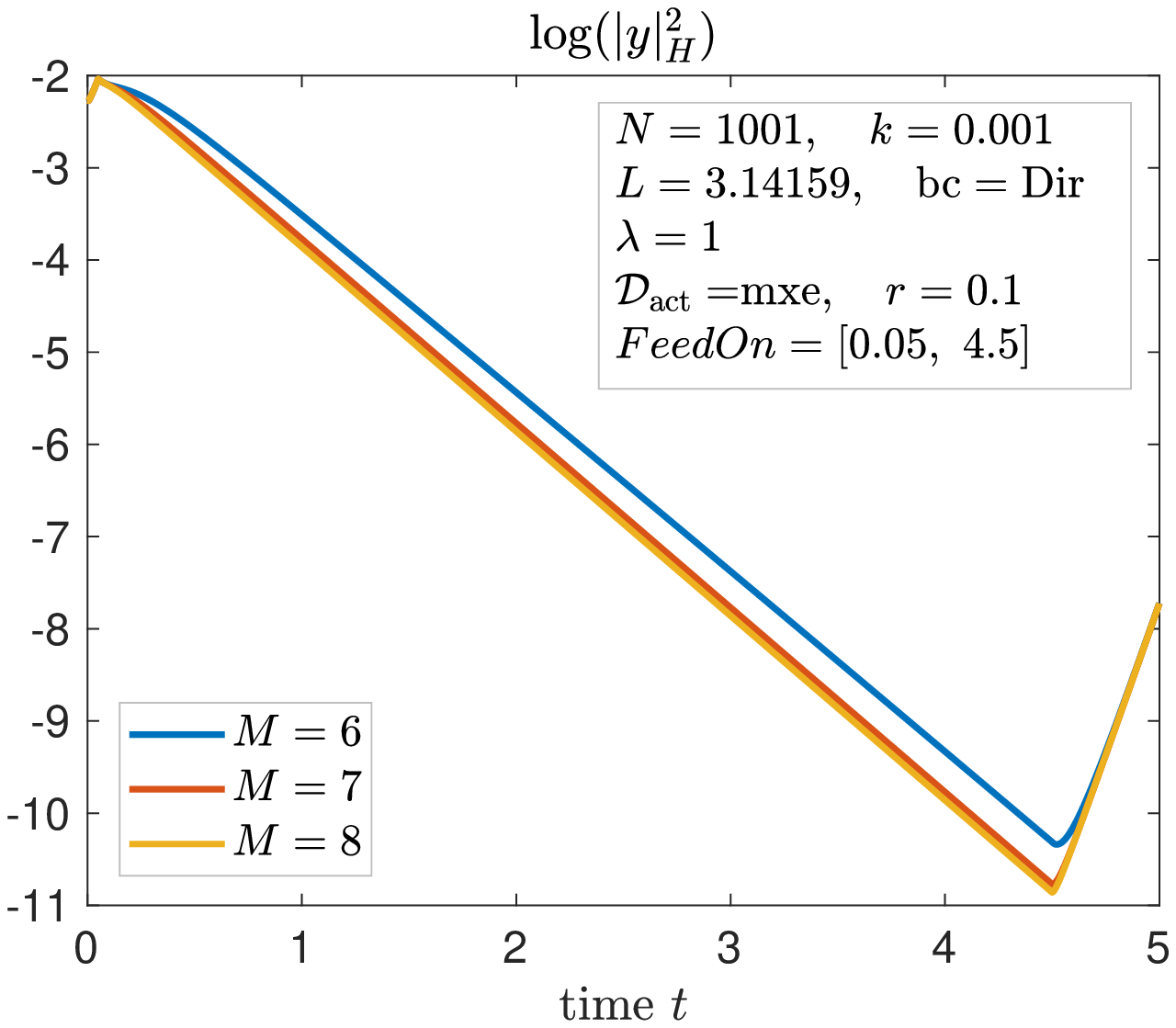,width=.325\linewidth,clip=}}\\%
\subfigure
{\epsfig{file=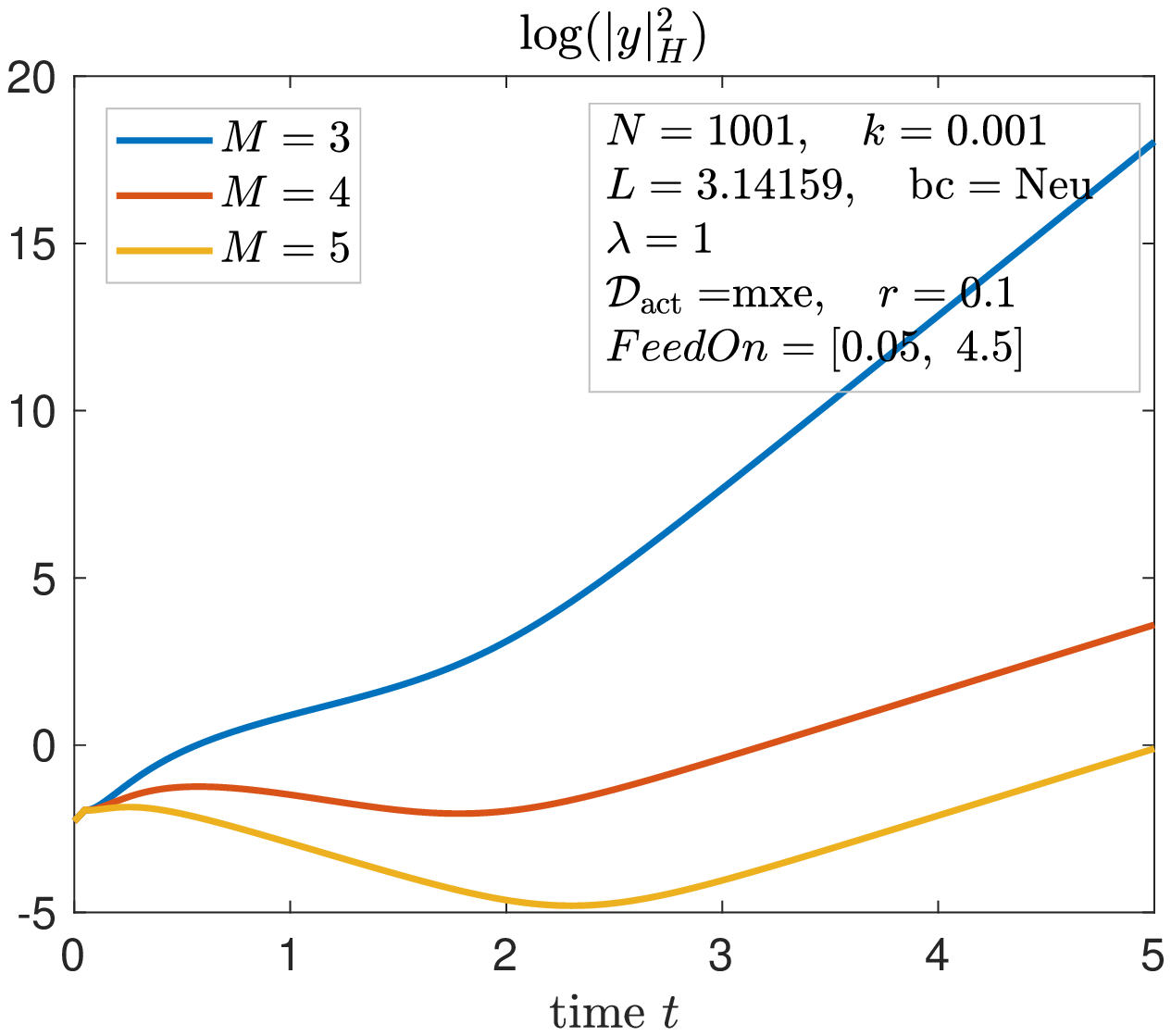,width=.325\linewidth,clip=}}%
\qquad
\subfigure
{\epsfig{file=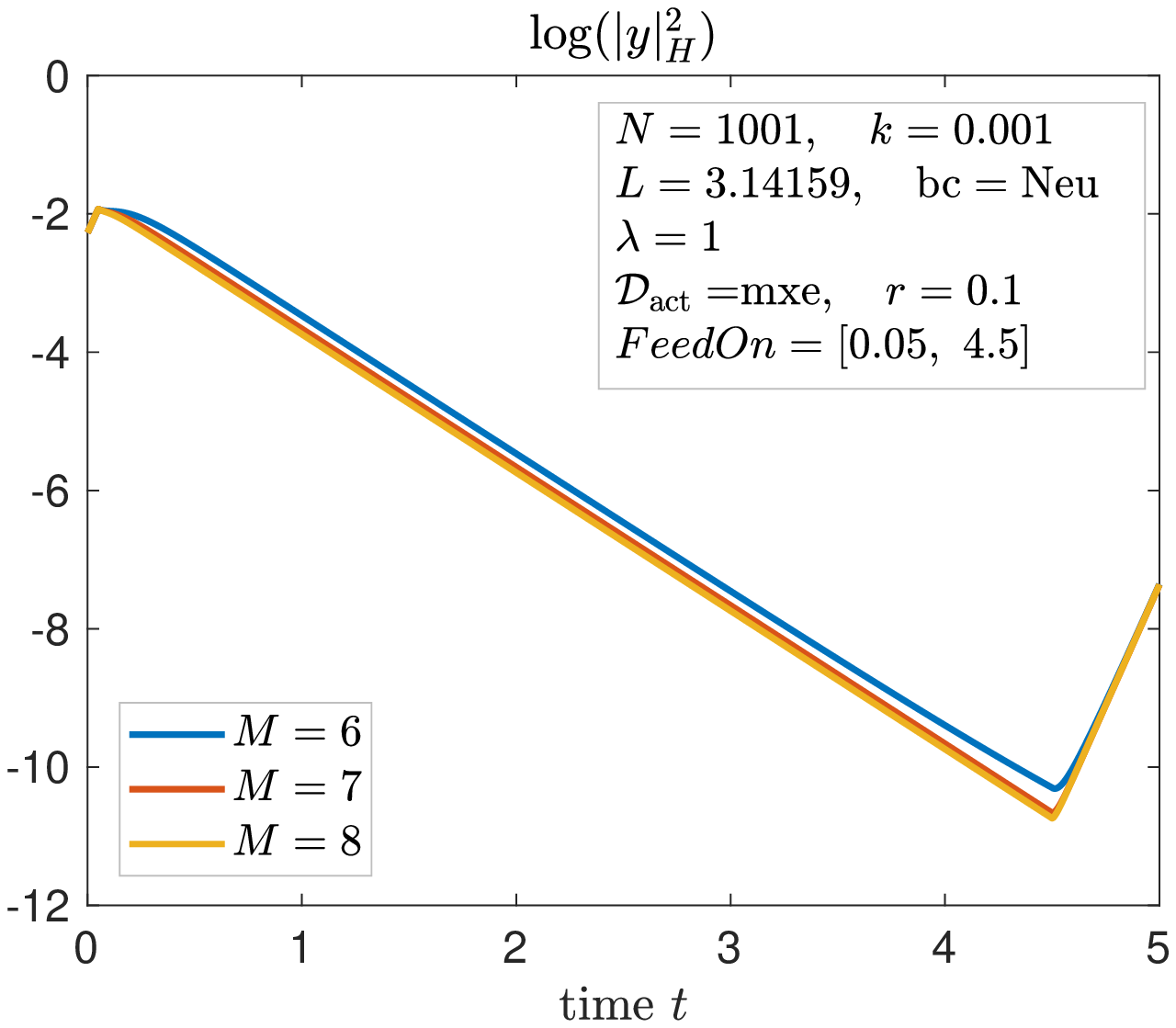,width=.325\linewidth,clip=}}%
\caption{Controlled solution under homogeneous Dirichlet and homogeneous Neumann boundary conditions. Feedback switched on only on the time
interval~$FeedOn$. Case of the constant reaction~\eqref{data_cr}.}
\label{Fig:perf_cr}
\end{figure}

\begin{figure}[ht]
\subfigure
{\epsfig{file=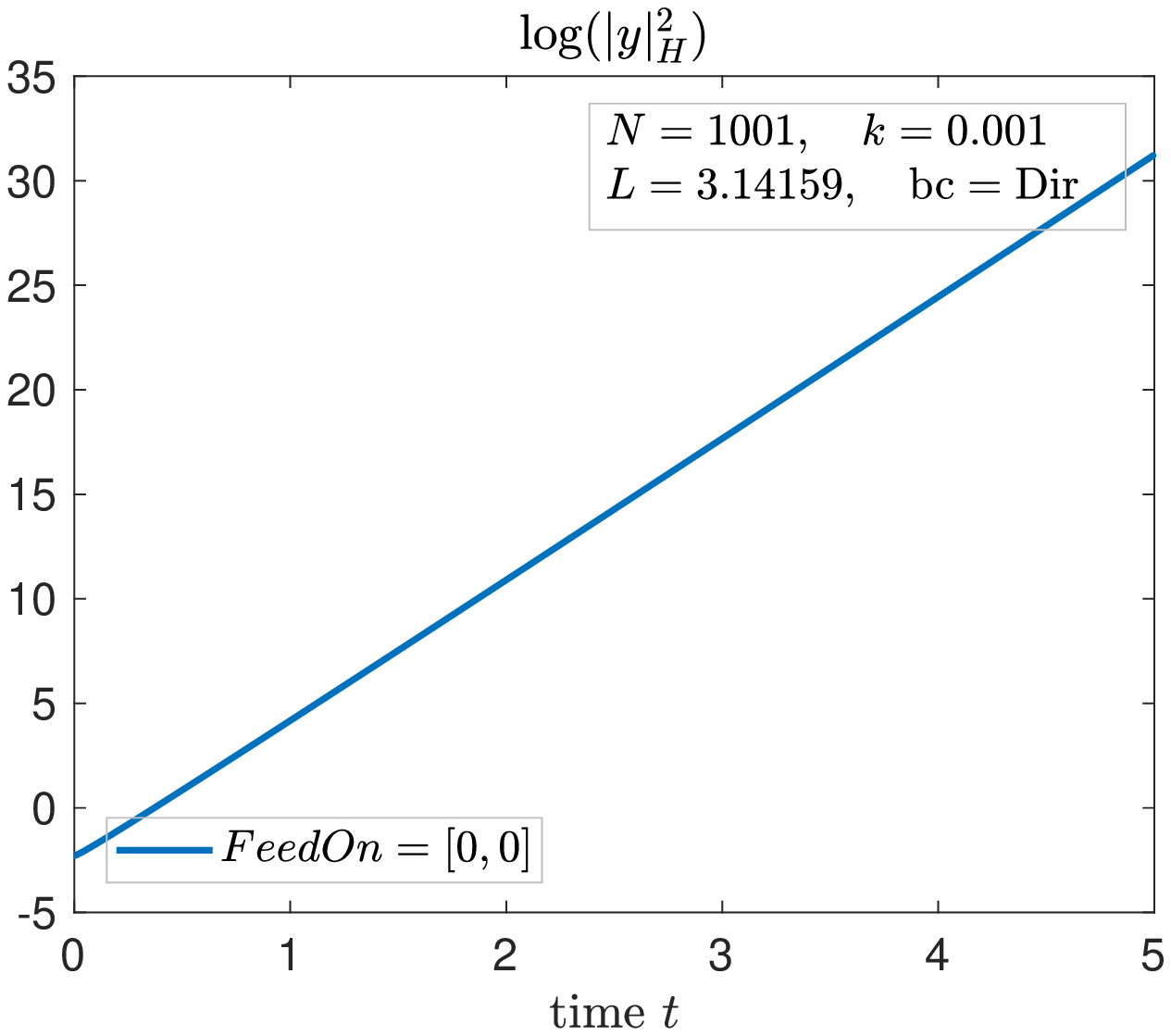,width=.325\linewidth,clip=}}%
\qquad
\subfigure
{\epsfig{file=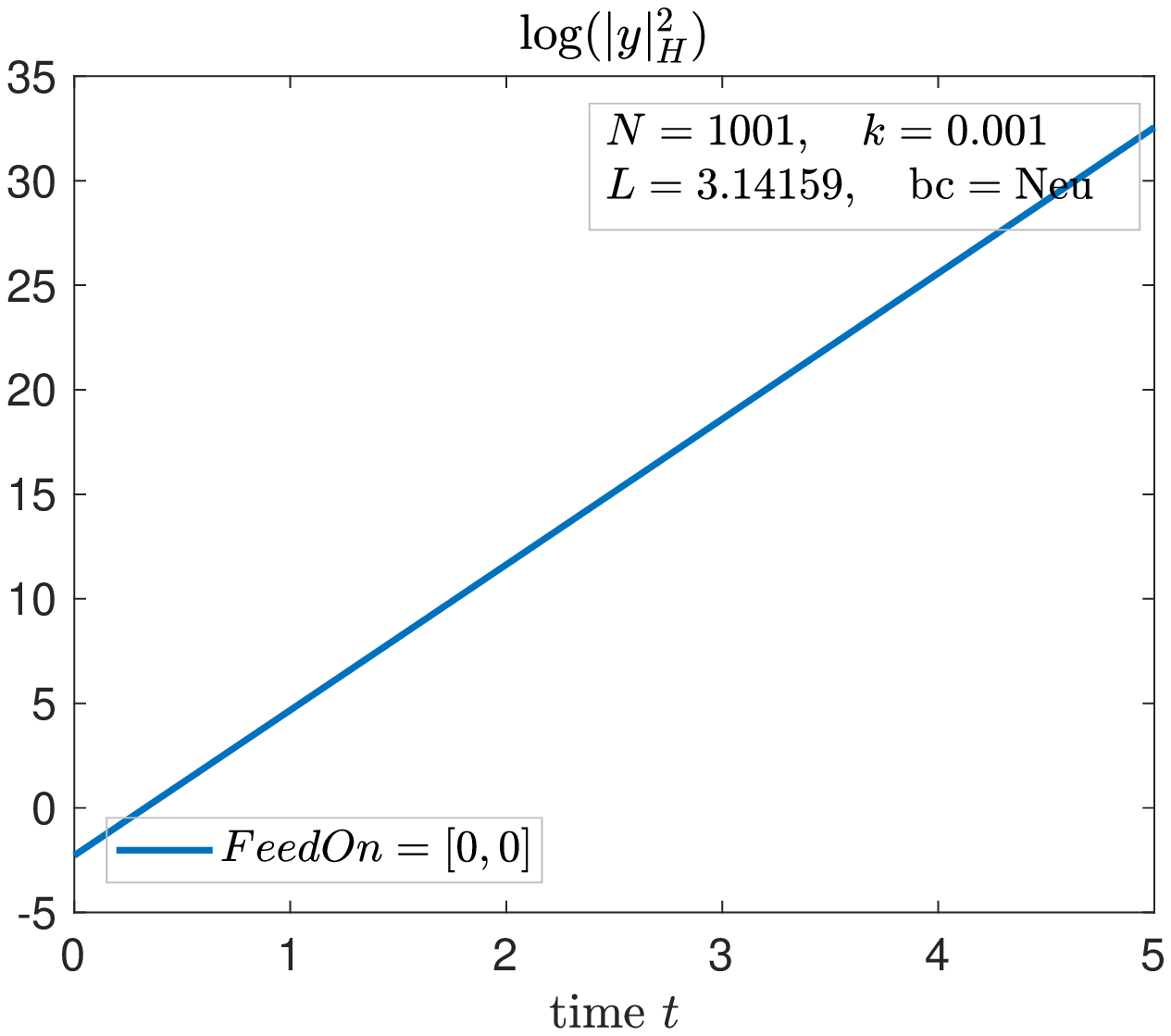,width=.325\linewidth,clip=}}%
\caption{Uncontrolled solution under homogeneous
Dirichlet and homogeneous Neumann boundary conditions. Case of the constant reaction~\eqref{data_cr}.}
\label{Fig:perf_cr_unc}
\end{figure} 
We observe that the oblique projection based feedback is able to stabilise the system for both Dirichlet and Neumann
boundary conditions, when~$M\ge6$.

We also observe that~$5$ actuators are able to stabilise 
the system under Dirichlet boundary conditions, but they are not are able to stabilise 
the system under Neumann boundary conditions.
This can be explained from the fact that the~$6$-th eigenvalue of~$-\nu\Delta$, that is~${}^{\tt d}\!\alpha_{5+1}=36\nu$
in the Dirichlet case, and~${}^{\tt n}\!\alpha_{5+1}=25\nu$ in the Neumann case, satisfy
\[
 {}^{\tt d}\!\alpha_{5+1}>-a>{}^{\tt n}\!\alpha_{5+1}.
\]
See also the discussion on~\cite[section~5.1]{KunRod-pp17}.

To test the performance of the feedback, we switched the control off for time~$t\notin FeedOn$. This means that for~$t\notin FeedOn$,
the free dynamics is followed. In Figure~\ref{Fig:perf_cr} we see that when we switch the control off (after time t=4.5) the
norm of the solution starts to increase, which shows/suggests the instability of the free dynamics. 
In Figure~\ref{Fig:perf_cr_unc}, the control is switched off in the entire time interval; we confirm the instability of the free dynamics.

Now we take the data as in~\eqref{data_perf}, and consider the case of a reaction depending both in space and time 
\begin{equation}\label{data_genr}
 a=a(t,x)=-35\nu(\fractx{\pi}{L})^2-2\norm{\cos(4t)\cos(xt)x}{\bbR}.
\end{equation}
We observe, in Figure~\ref{Fig:perf_genr}, that the feedback is able to stabilise the system for both Dirichlet and Neumann
boundary conditions, when~$M\ge8$.

We also observe that~$7$ actuators are likely able to stabilise 
the system under Dirichlet boundary conditions, but they are likely not  able to stabilise 
the system under Neumann boundary conditions.

 In Figure~\ref{Fig:perf_genr_unc} we observe that the free dynamics in unstable.

\begin{figure}[ht]
\subfigure
{\epsfig{file=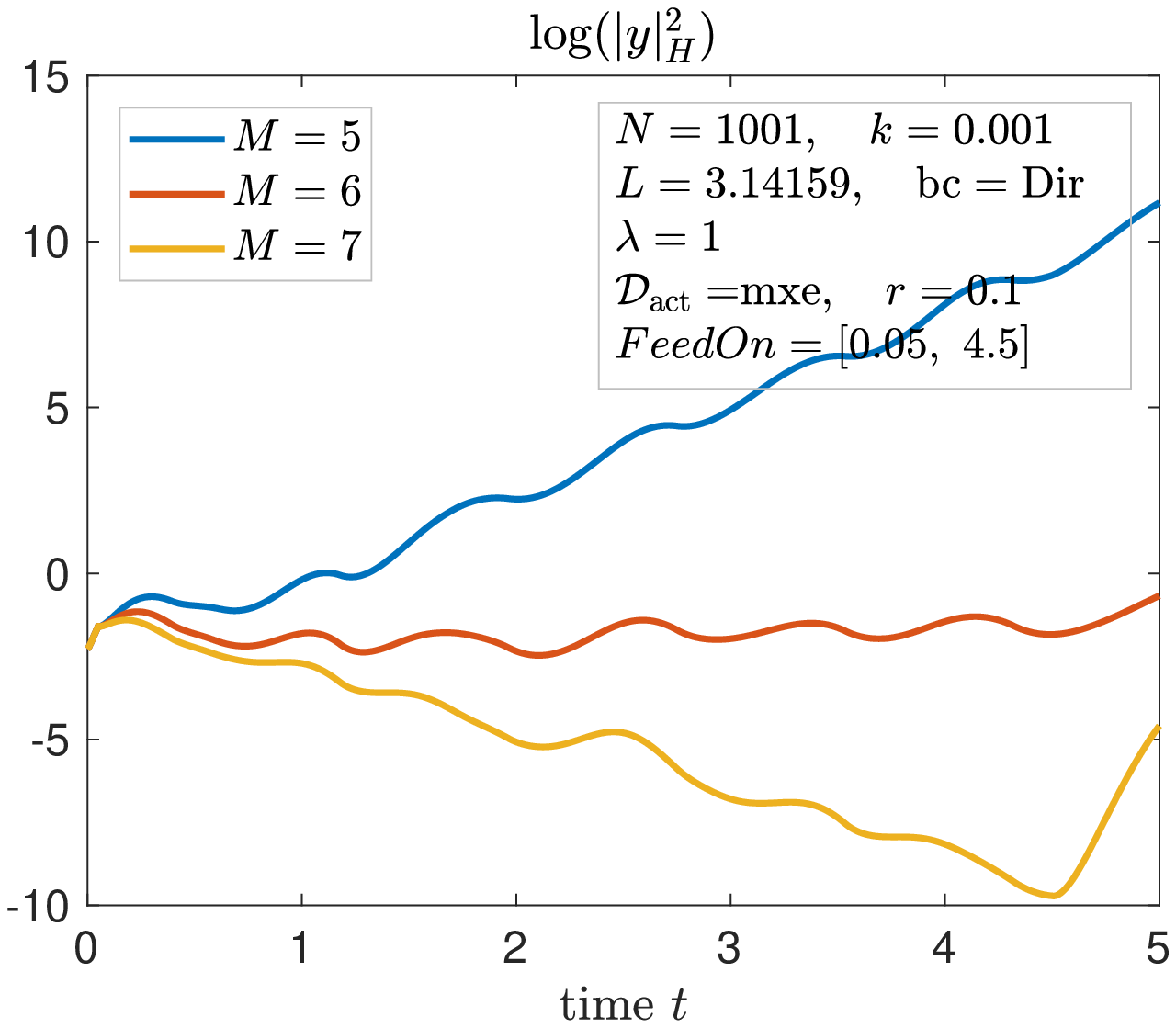,width=.325\linewidth,clip=}}%
\qquad
\subfigure
{\epsfig{file=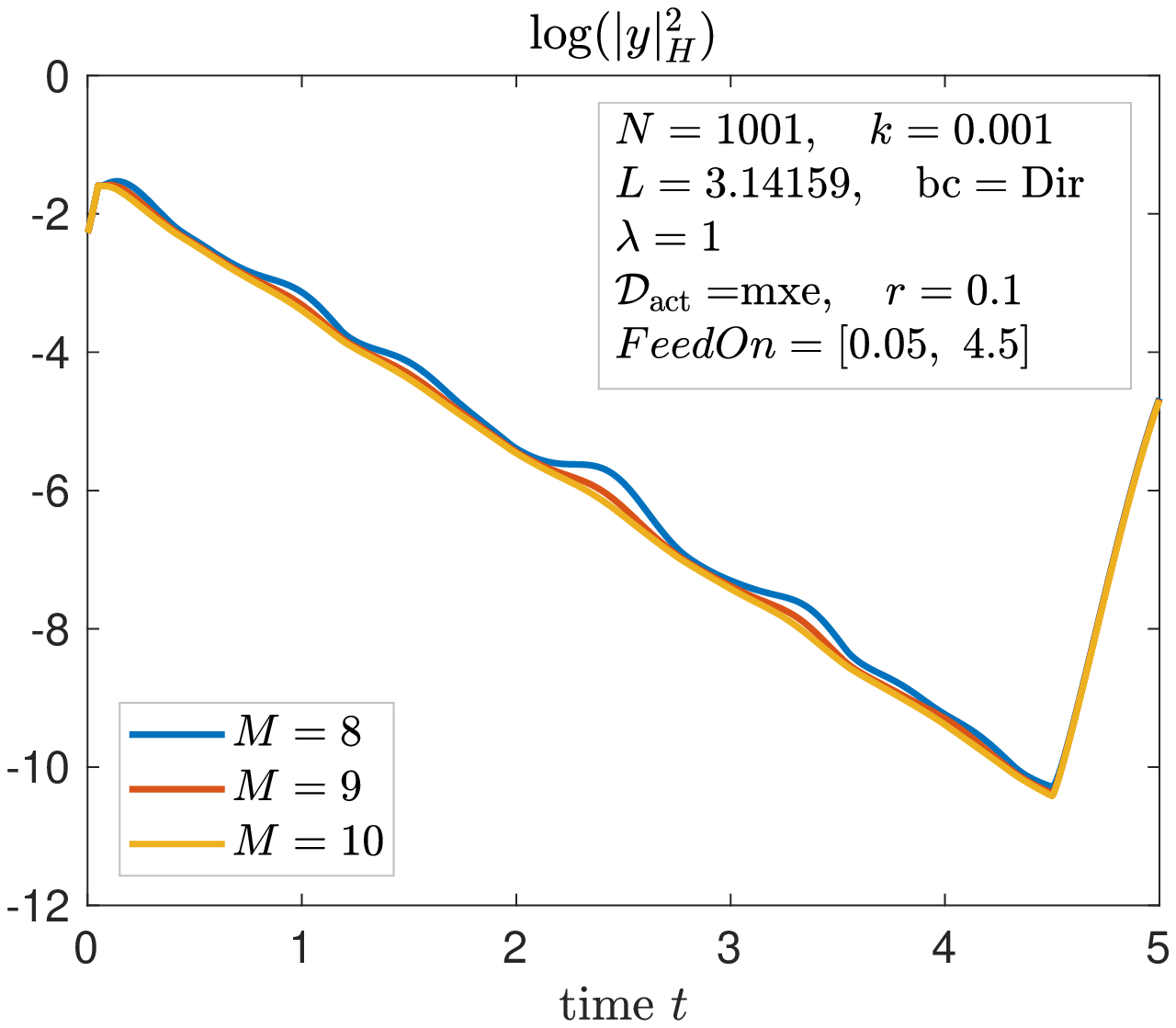,width=.325\linewidth,clip=}}\\%
\subfigure
{\epsfig{file=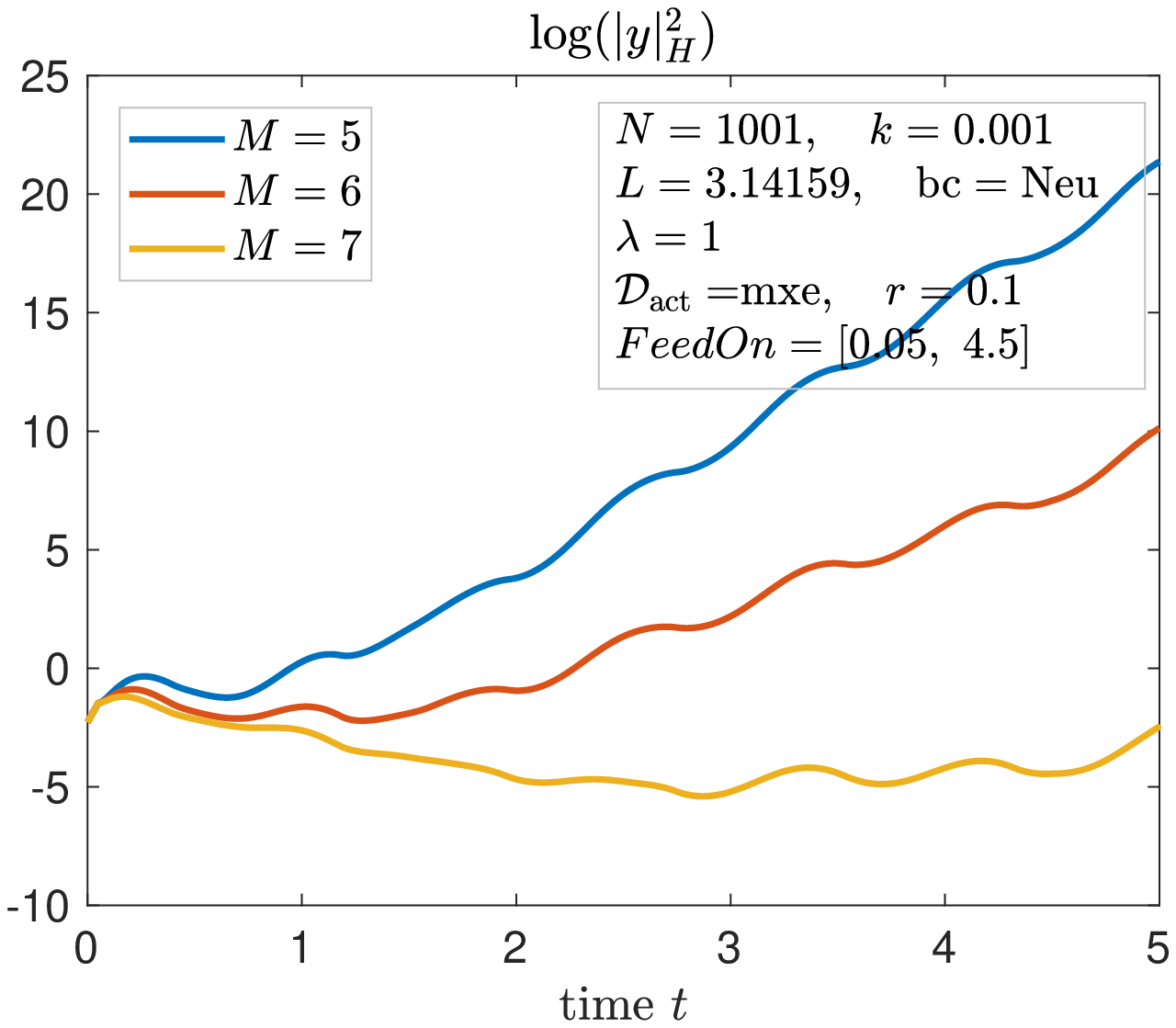,width=.325\linewidth,clip=}}%
\qquad
\subfigure
{\epsfig{file=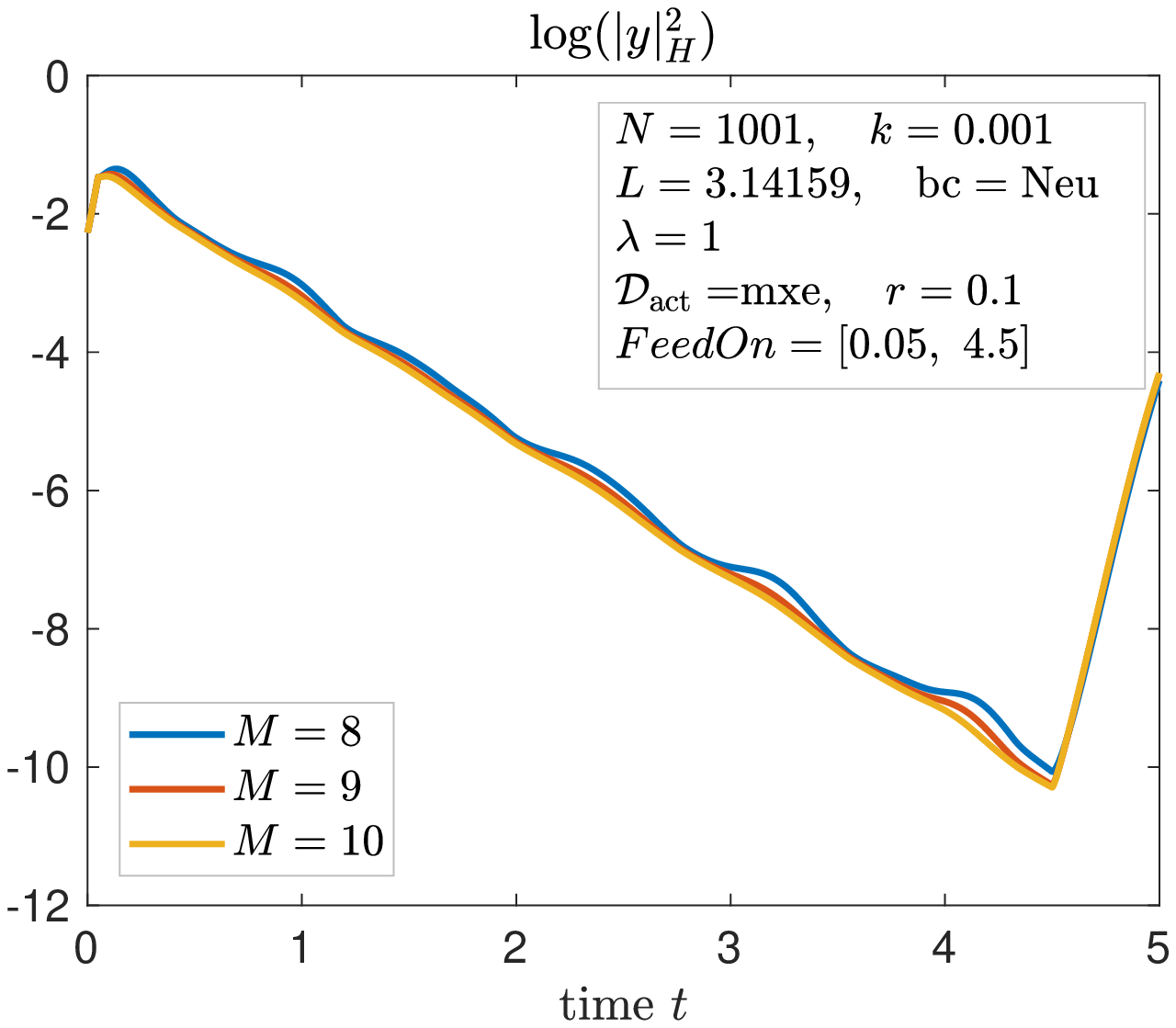,width=.325\linewidth,clip=}}%
\caption{Controlled solution under homogeneous Dirichlet and homogeneous Neumann boundary conditions. Feedback switched on only on the time
interval~$FeedOn$. Case of the reaction~\eqref{data_genr}.}
\label{Fig:perf_genr}
\end{figure}

\begin{figure}[ht]
\subfigure
{\epsfig{file=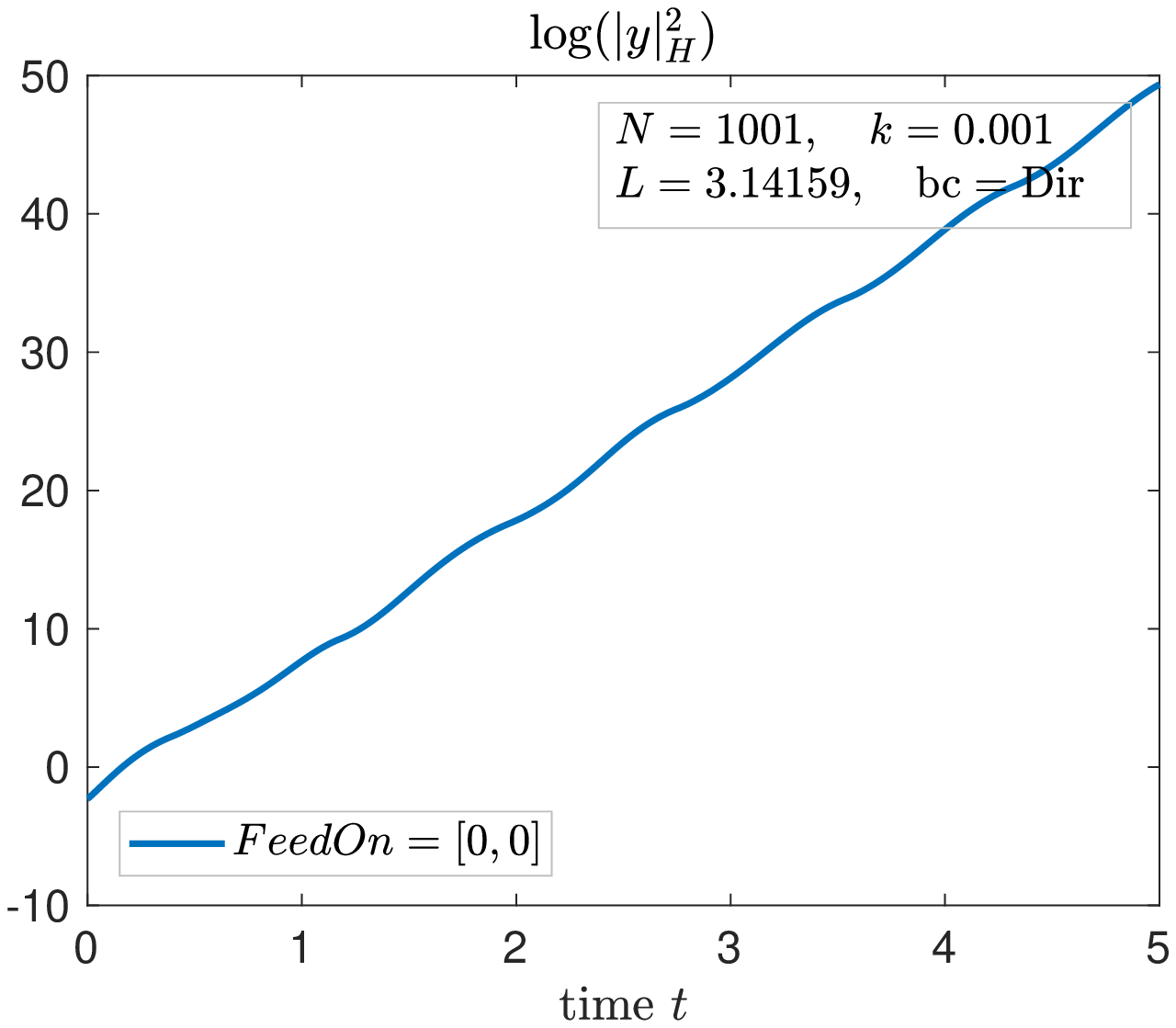,width=.325\linewidth,clip=}}%
\qquad
\subfigure
{\epsfig{file=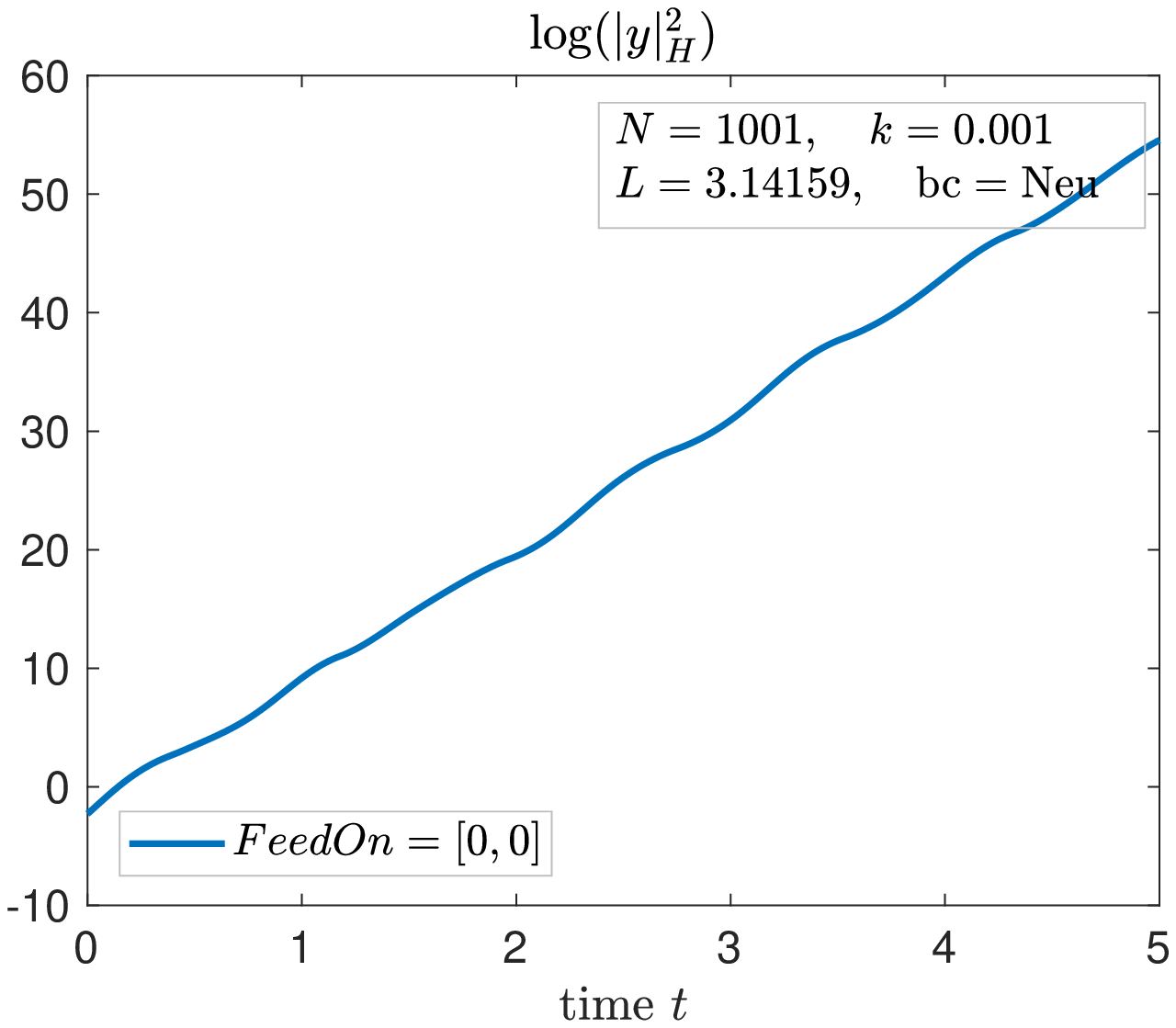,width=.325\linewidth,clip=}}%
\caption{Uncontrolled solution under homogeneous Dirichlet and
homogeneous Neumann boundary conditions. Case of the reaction~\eqref{data_genr}.}
\label{Fig:perf_genr_unc}
\end{figure} 

For further simulations and discussions (under Dirichlet boundary conditions)
we refer to~\cite{KunRod-pp17}.

\bigskip
\appendix
\addcontentsline{toc}{section}{Appendix}
\begin{center}
{\sc --- Appendix ---}
\end{center}
\setcounter{section}{1}
\setcounter{theorem}{0} \setcounter{equation}{0}
\numberwithin{equation}{section}
\nopagebreak

\subsection{Proof that it suffices to consider $L=\pi$}\label{sS:ProofP:L=pi}

\begin{proposition}\label{P:L=pi}
 Let $L>0$ and let us denote the eigenfunctions of the Dirichlet
 Laplacian in the interval~$(0,L)$ by~${}^{\tt d}\!e_i^{[L]}(x)$, $x\in(0,L)$. Then we have that
 ${}^{\tt d}\!e_i^{[\pi]}(w)={}^{\tt d}\!e_i^{[L]}(\frac{L}{\pi}w)$, $w\in(0,\pi)$. For given actuators~$1_{\omega_j}^{[L]}(x)$ we set
 $1_{\omega_j}^{[\pi]}(w)\coloneqq 1_{\omega_j}^{[L]}(\frac{L}{\pi}w)$. Then we have that
 $
 \norm{P_{U_M[L]}^{{}^{\tt d}\!E_M^\perp[L]}}{\LL(L^2(0,L))}^2
=\norm{P_{U_M[\pi]}^{{}^{\tt d}\!E_M^\perp[\pi]}}{\LL(L^2(0,\pi))}^2. 
 $ The analogous result also follows in the case of the Neumann
 Laplacian.
\end{proposition}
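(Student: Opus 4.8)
The plan is to realise both oblique projections as unitarily equivalent operators through the change of variables $x=\frac{L}{\pi}w$, after which equality of operator norms is automatic. Concretely, I would introduce the rescaling operator
\[
 T\colon L^2(0,L)\to L^2(0,\pi),\qquad (Tf)(w)\coloneqq(\fractx{L}{\pi})^{\frac12} f(\fractx{L}{\pi}w),
\]
and first check that $T$ is unitary: the substitution $x=\frac{L}{\pi}w$ gives $\int_0^\pi\abs{(Tf)(w)}^2\,\ed w=\fractx{L}{\pi}\int_0^\pi\abs{f(\fractx{L}{\pi}w)}^2\,\ed w=\int_0^L\abs{f(x)}^2\,\ed x$, so $T$ is an isometry, and it is plainly onto with inverse $(T^{-1}g)(x)=(\fractx{\pi}{L})^{\frac12} g(\fractx{\pi}{L}x)$.

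Next I would verify that $T$ transports the two subspaces defining each projection. Since $T$ sends ${}^{\tt d}\!e_i^{[L]}$ to a scalar multiple of the corresponding eigenfunction ${}^{\tt d}\!e_i^{[\pi]}$ on $(0,\pi)$, it maps $E_M[L]$ onto $E_M[\pi]$. Likewise, if $\omega_j^{[L]}=(a,b)\subset(0,L)$ then $1_{\omega_j}^{[L]}(\fractx{L}{\pi}w)=1_{(\fractx{\pi}{L}a,\fractx{\pi}{L}b)}(w)$, so $T$ sends $1_{\omega_j}^{[L]}$ to a multiple of the indicator of the rescaled interval $\omega_j^{[\pi]}=\fractx{\pi}{L}\omega_j^{[L]}$; because the rescaling multiplies both centres and lengths by $\frac{\pi}{L}$, the family $\{\omega_j^{[\pi]}\}$ is again of the form \eqref{act-eqlen} on $(0,\pi)$ with the same ratio $r$, and hence $T(U_M[L])=U_M[\pi]$. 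As $T$ is unitary it preserves orthogonal complements, $T(S^\perp)=(TS)^\perp$, whence $T({}^{\tt d}\!E_M^\perp[L])={}^{\tt d}\!E_M^\perp[\pi]$; in particular the splitting \eqref{eq:splitting} on $(0,L)$ is carried to the corresponding splitting on $(0,\pi)$.

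Finally, from these mapping properties I would deduce the intertwining relation $T\,P_{U_M[L]}^{{}^{\tt d}\!E_M^\perp[L]}=P_{U_M[\pi]}^{{}^{\tt d}\!E_M^\perp[\pi]}\,T$: writing $f=u+e$ with $u\in U_M[L]$ and $e\in{}^{\tt d}\!E_M^\perp[L]$, the image $Tf=Tu+Te$ is exactly the decomposition of $Tf$ along $U_M[\pi]\oplus{}^{\tt d}\!E_M^\perp[\pi]$, so uniqueness of the oblique decomposition gives $P_{U_M[\pi]}^{{}^{\tt d}\!E_M^\perp[\pi]}(Tf)=Tu=T(P_{U_M[L]}^{{}^{\tt d}\!E_M^\perp[L]}f)$. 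Conjugation by the unitary $T$ preserves operator norms, yielding $\norm{P_{U_M[\pi]}^{{}^{\tt d}\!E_M^\perp[\pi]}}{\LL(L^2(0,\pi))}=\norm{P_{U_M[L]}^{{}^{\tt d}\!E_M^\perp[L]}}{\LL(L^2(0,L))}$ and, a fortiori, the claimed equality of squared norms. The Neumann case is identical, replacing the sine eigenfunctions by the cosine eigenfunctions \eqref{eigs-Neu}, which transform under $T$ in the same way. I expect the only genuinely delicate point to be the bookkeeping for the indicator functions, namely verifying that the rescaled actuator intervals remain of the prescribed form \eqref{act-eqlen} with the same $r$, together with the observation that all normalisation constants are irrelevant here, since the oblique projection depends only on the subspaces $U_M$ and ${}^{\tt d}\!E_M^\perp$ and not on any particular choice of generators.
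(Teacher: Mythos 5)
Your proposal is correct and follows essentially the same route as the paper: a change-of-variables (rescaling) operator conjugating the two oblique projections, from which equality of operator norms follows. The only cosmetic difference is that you normalise the rescaling operator $T$ to be unitary, whereas the paper's $S_{[\pi:L]}$ carries the factor $(\fractx{L}{\pi})^{\frac12}$ and is only a positive scalar multiple of a unitary; your choice makes the preservation of orthogonal complements and of the operator norm immediate, which is a slight tidying of the same argument.
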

Let us consider the rescaling bijective linear
mapping~$R:x\mapsto y=\frac{L}{\pi}x$ mapping~$(0,\pi)$ onto~$(0,L)$. Then the mapping
$S=S_{[\pi: L]}\in\Cl(L^2((0,\pi)),L^2((0,L)))$, defined by~$S:f(x)\mapsto Sf(y)\coloneqq f(R^{-1}y)$ satisfies
\[
 \norm{S}{\Cl(L^2((0,\pi)),L^2((0,L)))}=(\fractx{L}{\pi})^\frac{1}{2},
\]
because~$\int_0^{L}|Sf(y)|^2\ed y=\int_0^{\pi}|f(x)|^2\frac{L}{\pi}\ed x=\frac{L}{\pi}\int_0^{\pi}|f(x)|^2\ed x$.

Further observe that~$S_{[L:\pi]}=S_{[\pi:L]}^{-1}$, and recall that
${}^{\tt d}\!e_i^{[\pi]}(w)={}^{\tt d}\!e_i^{[L]}(Rw)$, and
 $1_{\omega_j}^{[\pi]}(w)\coloneqq 1_{\omega_j}^{[L]}(\frac{L}{\pi}w)$, $w\in(0,\pi)$. 

 We may write
 \[
P_{U_M[L]}^{{}^{\tt d}\!E_M^\perp[L]} = S_{[L:\pi]}\circ P_{U_M[\pi]}^{{}^{\tt d}\!E_M^\perp[\pi]}\circ S_{[\pi:L]}\qquad\mbox{and}\qquad
S_{[\pi:L]}\circ P_{U_M[L]}^{{}^{\tt d}\!E_M^\perp[L]}\circ S_{[L:\pi]} =  P_{U_M[\pi]}^{{}^{\tt d}\!E_M^\perp[\pi]},
\]
 from which we can conclude that~$\norm{P_{U_M[L]}^{{}^{\tt d}\!E_M^\perp[L]}}{\LL(L^2(0,L))}^2
=\norm{P_{U_M(c)[\pi]}^{{}^{\tt d}\!E_M^\perp[\pi]}}{\LL(L^2((0,\pi)))}^2$.

Following the same argument, for the case of Neumann eigenfunctions of the Laplacian, we will arrive to the
identity~$\norm{P_{U_M[L]}^{{}^{\tt n}\!E_M^\perp[L]}}{\LL(L^2(0,L))}^2
=\norm{P_{U_M(c)[\pi]}^{{}^{\tt n}\!E_M^\perp[\pi]}}{\LL(L^2(0,\pi))}^2$.

Therefore, the norm of the oblique projection does not depend on the length~$L$ of the interval.
See also~\cite[Section~4.6]{KunRod-pp17}.
 The proof of Proposition~\ref{P:L=pi} is finished.
\qed

\bibliography{refsActPlac}
\bibliographystyle{plainurl}

\end{document}